\newcommand{\C}{\ensuremath{\mathbb{C}}}
\newcommand{\sumprime}{\if@display\sideset{}{'}\sum%
            \else\sum'\fi}
\begin{document}

\numberwithin{equation}{section}

\newtheorem{theorem}{Theorem}[section]
\newtheorem{proposition}[theorem]{Proposition}
\newtheorem{conjecture}[theorem]{Conjecture}
\def\theconjecture{\unskip}
\newtheorem{corollary}[theorem]{Corollary}
\newtheorem{lemma}[theorem]{Lemma}
\newtheorem{observation}[theorem]{Observation}
\newtheorem{definition}{Definition}
\numberwithin{definition}{section} 
\newtheorem{remark}{Remark}
\def\theremark{\unskip}
\newtheorem{kl}{Key Lemma}
\def\thekl{\unskip}
\newtheorem{question}{Question}
\def\thequestion{\unskip}
\newtheorem{example}{Example}
\def\theexample{\unskip}
\newtheorem{problem}{Problem}

\thanks{Supported by NSF grant 11771089 and Gaofeng grant  from School of Mathematical Sciences, Fudan University}

\address{Department of Mathematical Sciences, Fudan University, Shanghai, 20043, China}

\email{boychen@fudan.edu.cn}

\address{Department of Mathematical Sciences, Fudan University, Shanghai, 20043, China}

\email{ypxiong18@fudan.edu.cn}

\title[Curvature and Bergman spaces]{Curvature and $L^p$ Bergman spaces on complex submanifolds in ${\mathbb C}^N$}

 \author{Bo-Yong Chen and Yuanpu Xiong}
\date{}

\begin{abstract}
 Let $M$ be a closed complex submanifold in ${\mathbb C}^N$ with  the complete K\"ahler metric  induced by the Euclidean metric. Several finiteness   theorems on  the $L^p$ Bergman space  of  holomorphic sections of a given Hermitian line bundle $L$ over $M$ and the associated $L^2$ cohomology groups are obtained. Some infiniteness theorems are also given in order to test the accuracy of  finiteness theorems. As applications we obtain some rigidity results concerning growth of curvatures.
\bigskip

\noindent{{\sc Mathematics Subject Classification} (2010): 32Q15, 32Q28}

\smallskip

\noindent{{\sc Keywords}: $L^p$ Bergman space, $L^2-$cohomology group, curvature, complex submanifold.}
\end{abstract}
\maketitle

\section{Introduction}

 The interest of closed complex submanifolds in $\mathbb C^N$ is twofold. The first is that these complex manifolds are exactly\/ {\it all}\/ Stein manifolds--a central subject in several complex variables; the second is that they are minimal submanifolds in Euclidean spaces which enjoy a rich geometry.

 In what follows we shall denote by $M$ a closed complex submanifold  of dimension $n$ in ${\mathbb C}^N$ and $g$  the restriction  of the Euclidean metric on $M$. A basic question is

 \begin{problem}
 What is the relation between function theory  and  geometry of $M$?
\end{problem}

A beautiful result in this direction is the cerebrated result of Stoll \cite{Stoll} that  $M$ is affine-algebraic if and only if\/ $|M(x,r)|=O(r^{2n})$ for some (and any) $x$, where $M(x,r)$ is the intersection of $M$ with Euclidean ball $B(x,r)\subset \mathbb C^N$, and $|\cdot|$ means the volume with respect to $g$.
It is natural to expect to characterize affine-algebraicity through growth of curvature.  However, Vitter \cite{Vitter} found examples of smooth affine-algebraic varieties that some of them have Ricci curvature decaying faster than quadradically while others  do not. On the other hand, Cornalba-Griffiths \cite{CornalbaGriffiths} characterized the property that a Hermitian vector bundle over a smooth affine-algebraic variety has an algebraic structure through growth of curvature. 

The goal of this paper is to give some finiteness theorems on the $L^p$ Bergman space and $L^2$ cohomology groups associated to a Hermintian line bundle over a closed complex submanifold  in $\mathbb C^N$ through growth of curvature or affine-algebraicity. We remark that  a large literature exists for vanishing theorems and finiteness theorems for holomorphic sections of vector bundles and high cohomology groups on noncompact complete K\"ahler manifolds  (see, e.g., \cite{AndreottiGrauert}, \cite{DemaillyAffine}, \cite{Mok}, \cite{Ni}, or \cite{OhsawaBook}, Chapter 2, \S\,3).

\begin{definition}
For a Hermitian line bundle $(L,h)$ over $M$ we define the $L^p$ Bergman space  $H^0_{L^p} (M,L)$ to be the set of holomorphic sections $s$ of $L$ over $M$ satisfying
   $
   \int_M |s|^p_{h} dV_g<\infty.
   $
   Here $|\cdot|_h$ is the point-wise norm with respect to $h$ and $dV_g$ denotes the volume element of $g$.
\end{definition}

Let $\Theta_h$ be the curvature of $h$ and  ${\rm Tr}_g(\Theta_h)$ the trace of $\Theta_h$ with respect to $g$. Let $\rho$ denote the restriction of $|z|$ on $M$. For the sake of convenience, we always assume that $\rho>0$ on $M$. If an inequality $\lambda(x) \le \mu(x) $ (resp. $\lambda(x) \ge \mu(x) $) holds for $\rho(x)$ large enough, then we shall call it an asymptotic inequality on $M$ and write $\lambda \le_{\rm as} \mu$
   (resp. $\lambda \ge_{\rm as} \mu$).

    \begin{theorem}\label{th:finite_0}
   Suppose $n>1$ and there exists $\alpha>0$ such that
 \begin{equation}\label{eq:trace_1}
  {\rm Tr}_g(\Theta_h) \le_{\rm as}  \alpha/\rho^2.
 \end{equation}
Then  for every $0<p< \frac{2n(n-1)}{\alpha}$ we have
$$
{\rm dim}_{\mathbb C}H^0_{L^p} (M,L)<\infty.
$$
Moreover, if\/ $ {\rm Tr}_g(\Theta_h) \le  \alpha/\rho^2$ holds on whole $M$, then $H^0_{L^p} (M,L)=\{0\}$.
 \end{theorem}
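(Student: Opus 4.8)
The plan is to convert the curvature hypothesis into subharmonicity of a weighted power of $|s|_h$ and then exploit the monotonicity formula for minimal submanifolds. First I would record two ingredients. For a holomorphic section $s$ of $L$, the Poincar\'e--Lelong formula gives $i\partial\bar\partial\log|s|_h^2=[\mathrm{div}(s)]-\Theta_h$ as currents, whence $i\partial\bar\partial\log|s|_h^2\ge-\Theta_h$ and ${\rm Tr}_g\!\left(i\partial\bar\partial\log|s|_h^2\right)\ge-{\rm Tr}_g(\Theta_h)$, which is $\ge_{\rm as}-\alpha/\rho^2$ by \eqref{eq:trace_1}. Second, since $M$ is a complex submanifold, the restriction of $|z|^2$ satisfies $i\partial\bar\partial|z|^2=\omega_g$, so ${\rm Tr}_g(i\partial\bar\partial|z|^2)=n$; writing $r=\rho^2=|z|^2$ and using that $\rho$ is $1$-Lipschitz on $M$ (equivalently $|\partial r|_g^2\le r$, the tangential gradient being a projection of the ambient one) I would compute
\[
{\rm Tr}_g\!\left(i\partial\bar\partial\log\rho^2\right)=\frac{n}{\rho^2}-\frac{|\partial\rho^2|_g^2}{\rho^4}\ \ge\ \frac{n-1}{\rho^2}.
\]
This is precisely where the hypothesis $n>1$ and the constant $n-1$ enter.

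Combining the two inequalities, the function $\psi:=\log|s|_h^2+\frac{\alpha}{n-1}\log\rho^2$ satisfies ${\rm Tr}_g(i\partial\bar\partial\psi)\ge-\alpha/\rho^2+\frac{\alpha}{n-1}\cdot\frac{n-1}{\rho^2}=0$ wherever \eqref{eq:trace_1} holds, i.e.\ $\psi$ is subharmonic there. Since $e^{t\psi}$ is then subharmonic for every $t>0$, taking $t=p/2$ shows that
\[
F:=|s|_h^p\,\rho^{\,p\alpha/(n-1)}
\]
is a nonnegative subharmonic function. Under the asymptotic hypothesis $F$ is subharmonic on an exterior region $\{\rho>\rho_0\}$, while under the \emph{everywhere} hypothesis $F$ is subharmonic on all of $M$.

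The key geometric input is the monotonicity formula for minimal submanifolds: for $u\ge0$ subharmonic the ratio $\tau^{-2n}\int_{M(x,\tau)}u\,dV_g$ is nondecreasing in $\tau$ on any interval where $u$ is subharmonic, with limit as $\tau\to0$ a positive multiple of $u(x)$; applied to $F$ this yields $\int_{M(x,r)}F\,dV_g\ge c_n\,r^{2n}F(x)$ for admissible $r$. For the vanishing statement, where $F$ is subharmonic on all of $M$, I would bound the left side using $\rho\le|x|+r$ on $M(x,r)$ and $\|s\|_p^p:=\int_M|s|_h^p\,dV_g<\infty$, giving $\int_{M(x,r)}F\,dV_g\le(|x|+r)^{p\alpha/(n-1)}\|s\|_p^p$. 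Hence $c_n\,r^{2n}|s|_h^p(x)\,\rho(x)^{p\alpha/(n-1)}\le(|x|+r)^{p\alpha/(n-1)}\|s\|_p^p$; letting $r\to\infty$ the left side grows like $r^{2n}$ and the right like $r^{p\alpha/(n-1)}$, and since the condition $p<\frac{2n(n-1)}{\alpha}$ is exactly $2n>\frac{p\alpha}{n-1}$, we are forced to have $|s|_h(x)=0$. As $x$ is arbitrary this gives $s\equiv0$, proving $H^0_{L^p}(M,L)=\{0\}$.

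For the finiteness statement only the exterior subharmonicity is available, so I would center the monotonicity at far-out points: for $\rho(x)$ large take $r=\rho(x)/2$, so $M(x,r)\subset\{\rho>\rho_0\}$, and the same two-sided estimate yields the uniform decay $|s|_h(x)\le C\,\rho(x)^{-2n/p}\|s\|_p$ for $\rho(x)\ge2\rho_0$. Together with the interior sub-mean-value bound $\sup_{\{\rho\le2\rho_0\}}|s|_h\le C\|s\|_p$, this makes the unit ball of $H^0_{L^p}(M,L)$ a normal family with a uniform pointwise majorant tending to $0$ at infinity, and finite-dimensionality would follow by a Riesz-type argument once this ball is shown to be precompact in $L^p$. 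I expect the precompactness to be the main obstacle: the pointwise decay $|s|_h^p\le C\rho^{-2n}$ controls the tails only through $\int_{\{\rho>R\}}\rho^{-2n}\,dV_g$, which need not tend to $0$ since a general closed submanifold may have super-polynomial volume growth, so mass could a priori escape to infinity. Overcoming this requires a more careful, scale-optimized use of the monotonicity formula---comparing the weighted integral of $F$ at two large scales along an exhaustion---rather than the crude pointwise bound; equivalently, for $p=2$ one reduces $\dim H^0_{L^2}$ to the Bergman-kernel integral $\int_M K\,dV_g$ and estimates $K$ directly via the mean-value inequality. Turning the decay into an honest finite bound that is independent of the volume growth of $M$ is the step I anticipate will demand the most care.
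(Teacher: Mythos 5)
Your proof of the \emph{vanishing} statement (the case where ${\rm Tr}_g(\Theta_h)\le\alpha/\rho^2$ holds on all of $M$) is correct, and it takes a genuinely different route from the paper. Where the paper runs a cutoff argument through the Caccioppoli-type inequality \eqref{eq:CacciopoliIneq} and the Hardy inequality \eqref{eq:Hardy}, you observe that $F=|s|_h^p\,\rho^{p\alpha/(n-1)}$ is subharmonic (your trace computation ${\rm Tr}_g(i\partial\bar\partial\log\rho^2)\ge(n-1)/\rho^2$ is the same as the paper's $\Box\log\rho^2\ge(n-1)/\rho^2$, and the divisor part of $i\partial\bar\partial\log|s|_h^2$ only helps), and then invoke the monotonicity/mean-value inequality for minimal submanifolds---a tool the paper itself uses only in its appendix (Proposition \ref{prop:increasing}, citing Colding--Minicozzi). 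Since $p<2n(n-1)/\alpha$ is exactly $p\alpha/(n-1)<2n$, comparing the $r^{2n}$ growth of $c_n r^{2n}F(x)$ against the bound $(|x|+r)^{p\alpha/(n-1)}\|s\|_{L^p}^p$ on $\int_{M(x,r)}F\,dV_g$ forces $F\equiv0$, hence $s\equiv0$. This is cleaner than the paper's treatment of that case (which is obtained by setting $b=0$ in \eqref{eq:KeyIneq_0}), and it uses the hypothesis $n>1$ and the threshold $2n(n-1)/\alpha$ in a transparent way.

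For the \emph{finiteness} statement---the main assertion---your proposal has a genuine gap, which you yourself flag, and it is not a minor one. The pointwise decay $|s|_h(x)\le C\rho(x)^{-2n/p}\|s\|_{L^p}$ obtained by centering the monotonicity formula at far-out points is correct, but it cannot be upgraded to the uniform tail control needed to make the unit ball of $H^0_{L^p}(M,L)$ precompact: the tail is bounded only through $\int_{\{\rho>R\}}\rho^{-2n}dV_g$, and by Stoll's theorem any non-algebraic $M$ has $|M(0,r)|/r^{2n}\to\infty$, so this integral can diverge, let alone tend to zero; mass really can escape to infinity. Your proposed repairs inherit the same obstruction: for $p=2$ the Bergman-kernel reduction gives $\dim H^0_{L^2}\le\int_M K\,dV_g\lesssim\int_M\rho^{-2n}dV_g$, which again need not be finite, and any bound produced purely by the pointwise mean-value mechanism will scale with volume growth. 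What the paper proves instead is an \emph{integral} reverse estimate, \eqref{eq:KeyIneq_0}: $\varepsilon\int_M|s|_h^p\,dV\le{\rm const}\cdot\int_{\rho<3b}|s|_h^p\,dV$ with constants independent of $s$, obtained by playing the Hardy inequality (Lemma \ref{lm:Hardy}, with constant $(n-1)^2$) against the Caccioppoli inequality applied to $\psi=|s|_h^{p/2}\rho$ with cutoffs at scales $b$ and $R\to\infty$; the hypothesis $p<2n(n-1)/\alpha$ enters precisely as $(n-1)^2>|2(n-1)-\alpha p|/2$. This bounds the whole $L^p$ mass by the mass on a fixed compact set uniformly over the space, independently of volume growth, and that is exactly what lets Montel's theorem plus the quasi-norm Riesz argument (Proposition \ref{prop:finitedimen}) close: for a sequence $q(s_j)\le1$, local uniform convergence on $\{\rho<3b\}$ already forces convergence in $L^p$ on all of $M$. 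Your approach as it stands produces no volume-growth-independent estimate of this kind, so the finiteness part remains unproved.
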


  \begin{remark} 
 \rm (1) In particular, $
{\rm dim}_{\mathbb C}H^0_{L^p} (M,L)<\infty
$ for all $p$ if
 $
  \limsup_{\rho\rightarrow \infty} \rho^2\cdot{\rm Tr}_g(\Theta_h) =0.
 $

 \rm (2) In \S\,7 we shall show that if $L$ is semipositive and satisfies  $ \Theta_h \ge_{{\rm as}}  \alpha\cdot \omega/\rho^2$ with some $\alpha>8n$, where $\omega$ is the K\"ahler form of $g$, then $
{\rm dim}_{\mathbb C}H^0_{L^2} (M,K_M\otimes L)=\infty.
$
 \end{remark}

\begin{example}
 {\rm Let $M=\mathbb C^n\times \{1\}\subset \mathbb C^{n+1}$ and let $L$ be the trivial bundle with metric $h=e^{-\beta \log \rho^2}$.  Note that
$$
  \mathrm{Tr}_g(\Theta_h)=\frac{\beta}{4}\Delta\log\rho^2=\frac{(n-1)\beta}{\rho^2}.
  $$
 Theorem \ref{th:finite_0} yields the trivial fact that if $f$ is an entire  function with $\int_{\mathbb C^n} |f|^2/(1+|z|^2)^{\beta}<\infty$ then  $f\equiv0$ whenever $\beta<n$, which is almost the best possible as   $\int_{\mathbb C^n}1/(1+|z|^2)^{\beta}<\infty$ for  $\beta>n$.} 
\end{example}

  Our proof of Theorem \ref{th:finite_0} is based on a Caccioppoli-type inequality and a Hardy-type inequality. The latter explains why we need the assumption that $n>1$.  The underlying idea is in part motivated by Yau's proof of his $L^p$ Liouville theorem for   subharmonic functions on  complete Riemannian manifolds (cf. \cite{Yau}).   
For holomorphic curves we have

 \begin{theorem}\label{th:finite}
  Suppose $n=1$ and  there exists a continuous increasing function  $\mu:(0,\infty)\rightarrow (0,\infty)$  with
$
\int_1^\infty dt/\mu(t)<\infty
$
such that
 \begin{equation}\label{eq:scalar}
  {\rm Tr}_g(\Theta_h) \le_{{\rm as}}  1/{\mu (\rho^2)}.
 \end{equation}
Then the linear space of holomorphic sections of $L$  satisfying 
$$
\int_M |s|_h^p \, \rho^{-\alpha} dV_g<\infty
$$
 is of finite-dimensional for each $\alpha<2$ and\/ $0<p<\infty$. In particular, $
{\rm dim}_{\mathbb C}H^0_{L^p} (M,L)<\infty$ for each
 $0<p<\infty$.
 \end{theorem}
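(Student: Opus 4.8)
The plan is to combine a Caccioppoli-type inequality for $|s|_h^{p/2}$ with a Hardy-type inequality tailored to real dimension two, the latter built from the hypothesis $\int_1^\infty dt/\mu(t)<\infty$ in place of the weight $1/\rho^2$ that is available only when $n>1$. I first record the two structural facts that drive everything. For a holomorphic section $s$ of $L$, the Poincar\'e--Lelong formula gives $\frac14\Delta_g\log|s|_h^2=-{\rm Tr}_g(\Theta_h)$ away from the zeros of $s$ (with a nonnegative defect at the zeros), so setting $v:=|s|_h^{p/2}\ge0$ one computes, where $v>0$, that $\Delta_g v\ge v\,\Delta_g\log v=-p\,{\rm Tr}_g(\Theta_h)\,v\ge_{\rm as}-\frac{p}{\mu(\rho^2)}\,v$, and this holds weakly on all of $M$. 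Thus $v$ is subharmonic up to a curvature defect decaying like $1/\mu(\rho^2)$. Second, since $M$ is a complex (hence minimal) submanifold, $\rho^2=|z|^2$ satisfies $\Delta_g\rho^2=4n=4$ and $|\nabla_g\rho^2|^2\le 4\rho^2$; this lets me treat $\rho$ as a radial exhaustion with an essentially one-dimensional Laplacian.

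Testing the differential inequality against $\chi^2 v$ with a Lipschitz cutoff $\chi$ yields the Caccioppoli inequality
$$\int_M\chi^2|\nabla_g v|^2\,dV_g\le 2\int_M p\,{\rm Tr}_g(\Theta_h)\,\chi^2 v^2\,dV_g+4\int_M|\nabla_g\chi|^2 v^2\,dV_g.$$
The next step, which replaces the Hardy inequality of the case $n>1$, is to prove the weighted inequality $\int_M \phi^2/\mu(\rho^2)\,dV_g\le C\int_M|\nabla_g\phi|^2\,dV_g$ for $\phi\in C_c^\infty(M)$. In real dimension two the classical Hardy inequality with weight $1/\rho^2$ fails, its constant degenerating precisely because $\int^\infty d\rho/\rho=\infty$; the assumption $\int_1^\infty dt/\mu(t)<\infty$ makes $1/\mu(\rho^2)$ integrable against the radial measure $\rho\,d\rho$ and exactly cures this logarithmic divergence. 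Using $\Delta_g\rho^2=4$ and a one-dimensional Hardy argument in the variable $\rho^2$ I expect such an inequality, together with a localized form on $\{\rho>R\}$ whose constant is $O\big(\int_{R^2}^\infty dt/\mu\big)\to0$. Feeding the localized inequality (with $\phi=\chi v$) into the curvature term and choosing $R$ large makes that term absorbable into the left-hand side for \emph{every} $p\in(0,\infty)$; the vanishing tail constant is what removes any upper restriction on $p$, in contrast with the range $p<2n(n-1)/\alpha$ forced in Theorem~\ref{th:finite_0}. After absorption, a standard exhausting choice of cutoffs combined with $v\in L^2(\rho^{-\alpha}dV_g)$ should give the a priori bound $\int_M|\nabla_g v|^2\,dV_g\le C\int_{\{\rho\le R_0\}}|s|_h^p\,dV_g$ and the decay of $|s|_h$ at infinity; here $\alpha<2$ enters, since $\int_{\{\rho>1\}}\rho^{-\alpha}dV_g=\infty$ forces $v\to0$.

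Finally I would deduce finite-dimensionality by a compactness argument. Writing $\|s\|^p:=\int_M|s|_h^p\rho^{-\alpha}dV_g$, the weight is bounded on compacta (recall $\rho>0$), so a bound on $\|s\|$ gives a local $L^p$ bound, and interior estimates for holomorphic sections make the unit ball a normal family: any sequence converges locally uniformly to a holomorphic section. It remains to upgrade local convergence to convergence in $\|\cdot\|$, that is, to show the family $\{|s|_h^p\rho^{-\alpha}dV_g:\|s\|\le1\}$ is tight at infinity; once this tightness is in hand, the unit ball is precompact and Riesz's theorem gives $\dim_{\mathbb C}H^0_{L^p}(M,L)<\infty$.

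The tightness step is where I expect the main obstacle, and it is the reason holomorphicity must be used rather than mere subharmonicity: the Caccioppoli and Hardy estimates control a single section but do not by themselves prevent weighted mass from escaping to infinity along a sequence (a genuine reverse estimate of the form $\int_M v^2\rho^{-\alpha}\le C\int_{\{\rho\le R_0\}}v^2$ is false for general almost-subharmonic $v$, precisely because $\rho^{-\alpha}$ is non-integrable at infinity when $\alpha<2$). To exclude escaping mass I would integrate $\Delta_g\log|s|_h^2$ over the sublevel sets of $\rho$ (using $\Delta_g\rho^2=4$) to obtain a Jensen--Lelong formula; the hypothesis $\int_1^\infty dt/\mu<\infty$ bounds the total curvature, hence the counting function of the zeros and the growth of the proximity term, so every $s\in H$ has at most logarithmic growth of $\log|s|_h$ and a uniformly bounded ``degree.'' This caps where the weighted mass can sit and should yield the required uniform tail smallness, completing the proof.
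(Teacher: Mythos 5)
Your plan hinges on the weighted Hardy inequality $\int_M \phi^2/\mu(\rho^2)\,dV_g \le C\int_M|\nabla_g\phi|^2\,dV_g$ for $\phi\in C_c^\infty(M)$, together with a localized version on $\{\rho>R\}$ with constant $O\bigl(\int_{R^2}^\infty dt/\mu\bigr)$, and this is precisely the step that fails. A closed curve in $\mathbb{C}^N$ can be parabolic: take $M=\mathbb{C}\times\{1\}\subset\mathbb{C}^2$ with $L$ trivial and $h\equiv1$, an admissible instance of the theorem since $\Theta_h=0$. On a parabolic surface no inequality of this form can hold for \emph{any} positive weight: the logarithmic cutoffs $\phi_k$ equal to $1$ on $\{|z|\le R_0\}$ and to $\log(R_k/|z|)/\log(R_k/R_0)$ on $\{R_0\le|z|\le R_k\}$ have Dirichlet energy $2\pi/\log(R_k/R_0)\to0$, while the left-hand side stays bounded below by the weighted measure of a fixed disc. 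The localized version fares no better: restricting to radial test functions on $\{|z|>R\}$, the Muckenhoupt criterion shows the optimal constant is comparable to $\sup_{r>R}\,\log(r/R)\int_{r^2}^\infty dt/\mu(t)$, which is \emph{infinite} for instance when $\mu(t)=t\log t\,(\log\log t)^2$ --- a continuous increasing function with $\int_1^\infty dt/\mu<\infty$, hence allowed by \eqref{eq:scalar}. So the integrability of $1/\mu$ does not ``cure'' the two-dimensional failure of Hardy, and the absorption of the curvature term cannot be carried out this way. Your second, admitted, gap --- tightness at infinity --- is not a side issue either: it is exactly what a reverse estimate must provide, and the Jensen--Lelong sketch does not produce one.

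The paper's actual route avoids any Hardy inequality when $n=1$. Setting $\lambda(t)=\min\{\mu(t),(1+t)\log^2(3+t)\}$, it uses a Greene--Wu type construction (Lemma \ref{lm:Construction}) to build a nonnegative $C^2$ psh function $\varphi$ on $\mathbb{C}^N$ with $i\partial\bar{\partial}\varphi\ge (p+1)\lambda(|z|^2)^{-1}\, i\partial\bar{\partial}|z|^2$ for $|z|\ge b$, yet with growth only $\varphi\le(2-\alpha)\log|z|$; this is where $\int_1^\infty dt/\mu<\infty$ and $\alpha<2$ enter. Then $\psi=|s|_h^{p/2}e^{\varphi/2}$ satisfies $\Delta\psi\ge\psi/\lambda(\rho^2)$ for $\rho\ge b$: the needed positivity is \emph{injected} by the auxiliary weight rather than extracted from the geometry of $M$. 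A Caccioppoli-type inequality with cutoffs of the form $\chi(R^{-1}\log\log\rho)$ --- whose outer gradient term is dominated by $\psi^2/\lambda(\rho^2)$ thanks to the cap $\lambda(t)\le(1+t)\log^2(3+t)$, hence tends to $0$ under the finiteness condition on $s$ --- then yields the reverse estimate $\int_M|s|_h^p\, e^\varphi/\lambda(\rho^2)\,dV\lesssim\int_{\rho<2b}(\cdots)\,dV$. This single inequality both replaces your absorption step and supplies the tightness needed for the Montel/quasi-norm compactness argument; finally, since $\lambda(t)\gtrsim t$, one has $e^\varphi/\lambda(\rho^2)=O(\rho^{-\alpha})$, so the space in the statement embeds into the finite-dimensional space so obtained. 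If you want to salvage your outline, you would have to replace the Hardy inequality by some such auxiliary-weight mechanism; as written, the key analytic step is false under the theorem's hypotheses.
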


An interesting case is when $L=K_M^{\otimes m}$ is the pluricanonical bundle and $h=(dV_g)^{-m}$. We call $P_{m,L^p}(M):={\rm dim}_{\mathbb C}H^0_{L^p} (M,K_M^{\otimes m})$ the $m-$th $L^p$\/ {\it plurigenus}\/ of $M$. It is important to realize that $P_{m,L^{2/m}}(M)$ is intrinsic to the complex structure of $M$ (see e.g., \cite{Sakai}).

   \begin{corollary}\label{cor:finite}
   \begin{enumerate}
       \item   Suppose $n>1$ and there exists $\alpha>0$ such that
 the scalar curvature of $g$ satisfies
$
  {\rm Scal}_g\ge_{{\rm as}}  - \alpha/\rho^2.
 $
Then
$
P_{m,L^p}(M)<\infty
$
for all $m\ge 1$ and\/ $0<p<\frac{2n(n-1)}{\alpha m}$.
       \item  Suppose $n=1$ and there exists a continuous increasing function  $\mu:(0,\infty)\rightarrow (0,\infty)$  with
$
\int_1^\infty dt/\mu(t)<\infty
$
such that the Gauss curvature of $g$ satisfies
$
  {\rm Gauss}_g \ge_{{\rm as}}  - 1/{\mu (\rho^2)}.
$
Then $P_{m,L^p}(M)<\infty$ for all\/ $m\ge 1$ and $0<p<\infty$.
       
   \end{enumerate}
   \end{corollary}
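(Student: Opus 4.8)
The plan is to deduce both parts of the Corollary directly from Theorem \ref{th:finite_0} and Theorem \ref{th:finite}, once we identify the curvature of the natural metric $h=(dV_g)^{-m}$ on the pluricanonical bundle $K_M^{\otimes m}$ and express $\mathrm{Tr}_g(\Theta_h)$ in terms of the scalar (resp. Gauss) curvature of $g$. No new analysis is required beyond the two theorems; the whole content is a curvature bookkeeping followed by a substitution.

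First I would record the basic local identity. In a holomorphic frame $dz_1\wedge\cdots\wedge dz_n$ of $K_M$, the metric induced by $g$ has weight $\varphi=\log\det(g_{i\bar j})$, so $\Theta_{K_M}=i\partial\bar\partial\log\det(g_{i\bar j})$ and hence $\Theta_h=m\,\Theta_{K_M}=m\,i\partial\bar\partial\log\det(g_{i\bar j})$. Taking the trace with respect to $g$ (the same $\tfrac14\Delta$-type trace visible in the Example, where $\mathrm{Tr}_g(i\partial\bar\partial\varphi)=g^{j\bar k}\varphi_{j\bar k}$) and using $\partial_j\partial_{\bar k}\log\det(g_{i\bar l})=-R_{j\bar k}$ gives $\mathrm{Tr}_g(\Theta_h)=-m\,g^{j\bar k}R_{j\bar k}=-m\,\mathrm{Scal}_g$. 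In complex dimension one this reads $\mathrm{Tr}_g(\Theta_h)=-cm\,\mathrm{Gauss}_g$ for the fixed positive constant $c$ relating the scalar and Gauss curvature normalizations.

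With this identity in hand, part (1) is immediate: the hypothesis $\mathrm{Scal}_g\ge_{\mathrm{as}}-\alpha/\rho^2$ becomes $\mathrm{Tr}_g(\Theta_h)\le_{\mathrm{as}} m\alpha/\rho^2$, so Theorem \ref{th:finite_0} applies to $(K_M^{\otimes m},h)$ with $m\alpha$ in place of $\alpha$ and yields $\dim_{\mathbb C}H^0_{L^p}(M,K_M^{\otimes m})<\infty$ for every $0<p<\frac{2n(n-1)}{m\alpha}$, which is exactly the asserted range. For part (2) the same identity turns $\mathrm{Gauss}_g\ge_{\mathrm{as}}-1/\mu(\rho^2)$ into $\mathrm{Tr}_g(\Theta_h)\le_{\mathrm{as}} cm/\mu(\rho^2)$. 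To match the normalization of Theorem \ref{th:finite}, which requires coefficient $1$, I would replace $\mu$ by $\tilde\mu:=\mu/(cm)$; since $\int_1^\infty dt/\tilde\mu(t)=cm\int_1^\infty dt/\mu(t)<\infty$, the function $\tilde\mu$ is still continuous, increasing, and integrable in the required sense, and $\mathrm{Tr}_g(\Theta_h)\le_{\mathrm{as}}1/\tilde\mu(\rho^2)$. Theorem \ref{th:finite} then gives finiteness of $H^0_{L^p}(M,K_M^{\otimes m})$ for all $0<p<\infty$.

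The only genuinely delicate point is pinning down the curvature normalization, i.e. verifying $\mathrm{Tr}_g(\Theta_{K_M})=-\mathrm{Scal}_g$ with the paper's sign and trace conventions (the $\tfrac14\Delta$-convention exhibited in the Example), and the constant $c$ linking scalar and Gauss curvature when $n=1$. The stated range $\frac{2n(n-1)}{\alpha m}$ forces the scalar-curvature convention $\mathrm{Scal}_g=g^{j\bar k}R_{j\bar k}$, so that the proportionality constant in part (1) is exactly $1$. Everything else is a harmless rescaling: the multiplicative constants $m$ and $c$ are absorbed into $\alpha$ in part (1) and into $\mu$ in part (2), and the integrability hypothesis on $\mu$ is invariant under multiplication by a positive constant.
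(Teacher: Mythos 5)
Your proposal is correct and is exactly the argument the paper intends (it states Corollary \ref{cor:finite} without proof, as an immediate consequence of Theorems \ref{th:finite_0} and \ref{th:finite}): apply those theorems to $L=K_M^{\otimes m}$ with $h=(dV_g)^{-m}$, using $\Theta_h=m\,i\partial\bar\partial\log\det(g_{j\bar k})=-m\,\mathrm{Ric}_g$, so that $\mathrm{Tr}_g(\Theta_h)=-m\,\mathrm{Scal}_g\le_{\mathrm{as}} m\alpha/\rho^2$ in part (1) and the Gauss-curvature bound rescales into $\tilde\mu=\mu/(cm)$ in part (2). Your handling of the normalization constants (absorbing $m$ into $\alpha$, and $cm$ into $\mu$ while preserving the integrability condition) is precisely the bookkeeping needed.
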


 Another interesting case is when $L$ is the trivial line bundle. It follows from Theorem \ref{th:finite} that if $n=1$ and $\varphi$ is a smooth function on $M$ and is subharmonic (w.r.t. $g$) outside a compact subset, then the linear space of holomorphic functions on $M$ satisfying 
$$
\int_M |f|^2 e^\varphi \rho^{-\alpha} dV_g<\infty
$$
 is of finite-dimensional for all $\alpha<2$. It may happen that $\varphi\rightarrow -\infty$ as $\rho\rightarrow \infty$, e.g., when $M$ is parabolic.

\begin{definition}
We define $\mathcal O_d(M)$ to be the linear space of holomorphic functions on $M$ with polynomial growth of order at most $d$. That is, $f\in \mathcal O_d(M)$ if $f$ is holomorphic on $M$ and satisfies $|f|\lesssim \rho^d$.
\end{definition}

 \begin{corollary}\label{cor:finiteness_2}
Suppose $n=1$ and the following conditions hold:
\begin{enumerate}
\item There exists a real-valued smooth function $\varphi$ on $M$ which is  subharmonic outside a compact subset and satisfies $e^\varphi \lesssim \rho^{-\alpha}$ for some $\alpha>0$.
\item There exists $\beta>0$ such that $|M(0,r)|\lesssim r^\beta$ for all $r$.
\end{enumerate}
Then $\mathcal O_d(M)$ is of finite-dimensional for each $d$.
\end{corollary}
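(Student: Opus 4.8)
The plan is to deduce the corollary from Theorem \ref{th:finite} applied to the \emph{trivial} line bundle $L=M\times\mathbb{C}$ equipped with the Hermitian metric $h=e^{N\varphi}$, where $\varphi$ is the function furnished by hypothesis (1) and $N>0$ is a large constant to be fixed at the end. For a holomorphic function $f$, regarded as a holomorphic section of $L$, the pointwise norm is $|f|_h=|f|\,e^{N\varphi/2}$, so the weighted integral in Theorem \ref{th:finite} with weight $\rho^{-\alpha'}$ and $\alpha'=0$ becomes $\int_M |f|^p e^{Np\varphi/2}\,dV_g$. The goal is to choose $N$ so large that every $f\in\mathcal O_d(M)$ lies in the corresponding $L^p$ space; since that space is finite-dimensional by the ``in particular'' part of Theorem \ref{th:finite}, finite-dimensionality of $\mathcal O_d(M)$ will follow from the inclusion $\mathcal O_d(M)\subseteq H^0_{L^p}(M,L)$.

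First I would verify the curvature hypothesis \eqref{eq:scalar}. For the trivial bundle with $h=e^{-\psi}$ one has ${\rm Tr}_g(\Theta_h)=c\,\Delta_g\psi$ for a positive constant $c$ (in the normalization of the Example, $c=1/4$); here $\psi=-N\varphi$, so ${\rm Tr}_g(\Theta_h)=-cN\,\Delta_g\varphi$. Since $\varphi$ is subharmonic outside a compact subset, $\Delta_g\varphi\ge0$ there, whence ${\rm Tr}_g(\Theta_h)\le0$ asymptotically. Consequently \eqref{eq:scalar} holds trivially with, for instance, $\mu(t)=t^2$, which is continuous, increasing, and satisfies $\int_1^\infty dt/\mu(t)<\infty$. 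Note that $h$ is a genuine smooth Hermitian metric because $\varphi$ is smooth and real-valued, so the application of Theorem \ref{th:finite} is legitimate.

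Next I would estimate the integral. Using $|f|\lesssim\rho^d$ (as $f\in\mathcal O_d(M)$) together with $e^\varphi\lesssim\rho^{-\alpha}$ from hypothesis (1), the integrand obeys $|f|^p e^{Np\varphi/2}\lesssim \rho^{\,pd-Np\alpha/2}$ for $\rho$ large, while it is bounded on the remaining compact region and hence contributes a finite amount there. Hypothesis (2) supplies the polynomial volume growth $|M(0,r)|\lesssim r^\beta$, and a routine dyadic decomposition over the annuli $\{2^k\le\rho<2^{k+1}\}$ shows that $\int_{\{\rho\ge1\}}\rho^{-\gamma}\,dV_g<\infty$ whenever $\gamma>\beta$. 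Choosing $N$ large enough that $Np\alpha/2-pd>\beta$ therefore forces $\int_M |f|^p e^{Np\varphi/2}\,dV_g<\infty$, giving $\mathcal O_d(M)\subseteq H^0_{L^p}(M,L)$ for this $h$ and any fixed $p\in(0,\infty)$.

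The only point requiring care is that $N$ must be chosen \emph{independently of the individual section} $f$: it may depend on the fixed data $d,\alpha,\beta,p$ but not on $f$, and this is exactly guaranteed by the uniform bounds $|f|\lesssim\rho^d$ (built into the definition of $\mathcal O_d(M)$) and $e^\varphi\lesssim\rho^{-\alpha}$. Granting this uniform choice, the inclusion above holds for all $f\in\mathcal O_d(M)$ simultaneously, and $\dim_{\mathbb{C}}\mathcal O_d(M)\le\dim_{\mathbb{C}}H^0_{L^p}(M,L)<\infty$ by Theorem \ref{th:finite}, completing the argument for each $d$.
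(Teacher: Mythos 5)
Your proposal is correct and takes essentially the same route as the paper: the paper likewise regards $f\in\mathcal O_d(M)$ as a section of the trivial bundle with weight $e^{m\varphi}$ (i.e.\ $p=2$, $m>\tfrac{\beta+2d}{\alpha}$), invokes Theorem \ref{th:finite} via the subharmonicity of $\varphi$ outside a compact set, and verifies integrability by the same dyadic-annulus estimate combining $|f|\lesssim\rho^d$, $e^\varphi\lesssim\rho^{-\alpha}$, and $|M(0,r)|\lesssim r^\beta$. Your only deviations are cosmetic: you allow a general exponent $p$ where the paper fixes $p=2$, and you re-verify the curvature hypothesis that the paper has already recorded in the remark preceding the corollary.
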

\begin{proof}
For each $f\in \mathcal O_d(M)$  we have
\begin{eqnarray*}
\int_{M} |f|^2 e^{m\varphi} dV_g & = & \int_{\rho<1}  |f|^2 e^{m\varphi} dV_g + \sum_{k=1}^\infty \int_{2^{k-1}\le \rho <2^k}  |f|^2 e^{m\varphi} dV_g\\
& \lesssim & \int_{\rho<1}  |f|^2 e^{m\varphi} dV_g + \sum_{k=1}^\infty 2^{k(2d-m\alpha+\beta)}\\
& < & \infty
\end{eqnarray*}
provided $m>\frac{\beta+2d}{\alpha}$. Thus $\dim_{\mathbb C}\mathcal{O}_d(M)<\infty$.
\end{proof}

\begin{example}
{\rm Let $M$ be an  affine-algebraic curve. We may realize $M$ as a branched covering of $\mathbb C$ as in Proposition \ref{prop:affine criteria}. 
Let $\psi$ be a smooth function on $\mathbb C$ such that $\psi(t)=-\log|t|$ outside a bounded neighborhood of $0$. Set
$
\varphi(z)=\psi(z_1).
$
Then $\varphi|_M$ is subharmonic outside a compact set and satisfies $\varphi|_M\le_{\rm as} -\log \rho+C$. Thus ${\rm dim}_{\mathbb C}\, \mathcal O_d(M)<\infty$ for each $d$.}
\end{example}

It is of classical interest to consider the relations between the intrinsic geometry associated to invariant metrics such as the Bergman metric and the extrinsic geometry associated to $g$ in case $M$ is biholomorphic to a bounded domain in $\mathbb C^n$ (e.g. the unit ball), which is guaranteed by the Bishop-Narasimhan-Remmert theorem.  We shall prove the following

\begin{theorem}\label{th:B-Metric}
Suppose that $M$ is biholomorphic to a bounded pseudoconvex domain with $C^2$ boundary  in $\mathbb C^n$ with $n>1$.  Then
$$
\liminf_{\rho\rightarrow \infty} \rho^2\left[ (1-\varepsilon){\rm Tr}_g(\omega_B) + (n+1)\, {\rm Scal}_g \right]  \le -(n-1)n(n+1)
$$
for all $\varepsilon>0$, where $\omega_B$ stands for the K\"ahler form of the Bergman metric on $M$. 
\end{theorem}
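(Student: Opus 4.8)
The plan is to argue by contradiction: convert the asserted $\liminf$ bound into a curvature upper bound to which Theorem \ref{th:finite_0} applies, and then exhibit an infinite-dimensional space of sections to reach a contradiction. Fix $\varepsilon>0$ and suppose instead that $\liminf_{\rho\to\infty}\rho^2\big[(1-\varepsilon)\mathrm{Tr}_g(\omega_B)+(n+1)\mathrm{Scal}_g\big]>-(n-1)n(n+1)$. Then there is $\delta>0$ with
\[
(1-\varepsilon)\mathrm{Tr}_g(\omega_B)+(n+1)\mathrm{Scal}_g\ \ge_{\rm as}\ \frac{-(n-1)n(n+1)+\delta}{\rho^2}.
\]
I identify $M$ with the bounded domain $\Omega\subset\mathbb{C}^n$ via the given biholomorphism and use coordinates $w=(w^1,\dots,w^n)$ on $\Omega$. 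Since $\Omega$ is a domain in $\mathbb{C}^n$, the canonical bundle $K_M$ is holomorphically trivial, so $\log\det(g_{i\bar j})$ and $\log K$ are \emph{globally} defined functions, where $K=K_\Omega$ is the Bergman kernel on the diagonal. Set $\psi:=(n+1)\log\det(g_{i\bar j})-(1-\varepsilon)\log K$ and let $L$ be the trivial line bundle with the metric $h=e^{-\psi}$. Using $\mathrm{Ric}_g=-i\partial\bar\partial\log\det(g_{i\bar j})$, $\omega_B=i\partial\bar\partial\log K$ and $\mathrm{Scal}_g=\mathrm{Tr}_g(\mathrm{Ric}_g)$, I compute $\mathrm{Tr}_g(\Theta_h)=\mathrm{Tr}_g(i\partial\bar\partial\psi)=-(n+1)\mathrm{Scal}_g-(1-\varepsilon)\mathrm{Tr}_g(\omega_B)$, so the displayed hypothesis reads $\mathrm{Tr}_g(\Theta_h)\le_{\rm as}\alpha/\rho^2$ with $\alpha=(n-1)n(n+1)-\delta<(n-1)n(n+1)$.

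By Theorem \ref{th:finite_0} (this is where $n>1$ enters), $\dim_{\mathbb{C}}H^0_{L^p}(M,L)<\infty$ for every $p<2n(n-1)/\alpha$; in particular for the critical exponent $p=2/(n+1)$, which satisfies $2/(n+1)<2n(n-1)/\alpha$ precisely because $\alpha<(n-1)n(n+1)$. It remains to contradict this by producing infinitely many independent sections. Holomorphic sections of the trivial bundle $L$ are holomorphic functions $f$ on $\Omega$, and at the critical exponent the extrinsic factor $\det(g_{i\bar j})$ cancels against $dV_g=\det(g_{i\bar j})\,d\lambda$ (with $d\lambda$ Lebesgue measure):
\[
\int_M|f|_h^{2/(n+1)}\,dV_g=\int_\Omega|f|^{2/(n+1)}\,\big(\det g_{i\bar j}\big)^{-1}K^{(1-\varepsilon)/(n+1)}\,\det(g_{i\bar j})\,d\lambda=\int_\Omega|f|^{2/(n+1)}K^{(1-\varepsilon)/(n+1)}\,d\lambda,
\]
a purely intrinsic integral. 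Taking $f$ to range over all polynomials (bounded on the bounded set $\Omega$, spanning an infinite-dimensional space), it suffices to show $\int_\Omega K^{(1-\varepsilon)/(n+1)}\,d\lambda<\infty$.

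The main point, and the delicate step, is the sharp upper bound $K(w)\lesssim\delta(w)^{-(n+1)}$, where $\delta(w)=\mathrm{dist}(w,\partial\Omega)$. Because $\partial\Omega$ is $C^2$, it satisfies a uniform interior ball condition: every $w$ near $\partial\Omega$ lies in some Euclidean ball $B\subset\Omega$ of fixed radius with $\delta_B(w)\asymp\delta(w)$. Monotonicity of the Bergman kernel under inclusion ($B\subset\Omega\Rightarrow K_\Omega\le K_B$), together with the explicit kernel of a ball, then gives $K(w)\le K_B(w)\lesssim\delta_B(w)^{-(n+1)}\lesssim\delta(w)^{-(n+1)}$. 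Hence $K^{(1-\varepsilon)/(n+1)}\lesssim\delta^{-(1-\varepsilon)}$, which is integrable over a domain with $C^2$ (indeed Lipschitz) boundary since $1-\varepsilon<1$. Therefore every polynomial lies in $H^0_{L^p}(M,L)$ with $p=2/(n+1)$, so this space is infinite-dimensional, contradicting the finiteness above; this proves the inequality for each $\varepsilon>0$. I expect the Bergman-kernel estimate to be the crux: it is exactly the exponent $n+1$ there, matched to the coefficient $n+1$ of $\mathrm{Scal}_g$ and to the trivialization of $K_M$, that makes the critical-exponent integral converge for \emph{all} $\varepsilon>0$ rather than only for $\varepsilon$ bounded away from $0$ (the crude bound $K\lesssim\delta^{-2n}$ would force the latter).
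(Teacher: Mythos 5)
Your proposal is correct and takes essentially the same route as the paper: argue by contradiction, convert the liminf hypothesis into a bound $\mathrm{Tr}_g(\Theta_h)\le_{\mathrm{as}}\alpha/\rho^2$ for a suitably weighted line-bundle metric, apply Theorem \ref{th:finite_0}, and contradict finiteness by showing all polynomials lie in the resulting weighted Bergman space via the kernel estimate $\mathcal{K}_\Omega\lesssim\delta_\Omega^{-(n+1)}$ and integrability of $\delta_\Omega^{-(1-\varepsilon)}$. The only differences are cosmetic: the paper takes $L=K_M$ with metric $(dV_g)^{-1}F^\ast(\mathcal{K}_\Omega^{\alpha})$ at exponent $p=2$, so that $H^0_{L^2}(M,K_M)\simeq A^2_\alpha(\Omega)$, whereas you use the trivial bundle at the critical exponent $p=2/(n+1)$ (an equivalent rescaling of the same data), and you spell out the interior-ball/monotonicity proof of the kernel bound that the paper dismisses as ``easy to verify.''
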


Let $\mathcal{H}^{p,q}_{L^2}(M,L)$ be the reduced $L^2$ cohomology group associated to $M$ and $L$.  The Bochner-Kodaira-Nakano inequality combined with a trick going back to Donnelly-Fefferman \cite{DonnellyFefferman} gives

\begin{theorem}\label{cor:finite_cohomology}
Suppose either of the following condition holds
\begin{enumerate}
\item $p+q<n-2$ and there exists $0<\alpha<\frac{2(n-p-q-2)}{n-p-q}$ such that
$
\Theta_h\leq_{\mathrm{as}}\alpha\cdot\omega/\rho^2,
$
\item $p+q>n+2$ and there exists $0<\alpha<\frac{2(p+q-n-2)}{p+q-n}$ such that
$
\Theta_h\geq_{\mathrm{as}}-\alpha\cdot\omega/\rho^2.
$
\end{enumerate}
Then  $\dim_\C\mathcal{H}^{p,q}_{L^2}(M,L)<\infty$.
\end{theorem}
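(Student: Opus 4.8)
The plan is to represent the reduced cohomology by $L^2$ harmonic forms and to force any such harmonic form to be controlled, in a $\rho^{-2}$-weighted sense, by its values on a fixed compact set. Since $M$ is closed in $\C^N$, the induced metric $g$ is complete, so the Hodge--Kodaira decomposition of Andreotti--Vesentini applies and $\mathcal{H}^{p,q}_{L^2}(M,L)$ is isomorphic to the space $\mathcal H$ of $L^2$ harmonic $(p,q)$-forms with values in $L$, i.e.\ forms $u$ with $\dbar u=0$ and $\dbarstar u=0$. It therefore suffices to show $\dim_\C\mathcal H<\infty$. The feature that makes the submanifold situation special is that $\rho^2=|z|^2|_M$ is a global K\"ahler potential, $i\partial\dbar\rho^2=\omega$; writing $\lambda=\log\rho^2$ this gives the self-bounded-gradient identity
\[
 i\partial\dbar\lambda+i\partial\lambda\wedge\dbar\lambda=\rho^{-2}\omega ,
\]
so that $|\partial\lambda|^2_\omega\le\rho^{-2}$ and $i\partial\dbar\lambda\le\rho^{-2}\omega$. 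This is precisely the configuration in which the Donnelly--Fefferman trick manufactures a gain of order $\rho^{-2}$.

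I would then apply the Bochner--Kodaira--Nakano identity, twisting the Hermitian metric $h$ by $e^{-t\lambda}$ and conjugating $\dbar,\dbarstar$ by $e^{s\lambda}$. The curvature of the twisted bundle is $\Theta_h+t\,i\partial\dbar\lambda$, while the conjugation produces the nonnegative term $s^2\,i\partial\lambda\wedge\dbar\lambda=s^2(\rho^{-2}\omega-i\partial\dbar\lambda)$. For harmonic $u$ the identity collapses to
\[
 0=\|\nabla^{1,0}u\|^2+\int_M\langle\mathcal C u,u\rangle\,dV_g ,
\]
where $\mathcal C$ is the zeroth-order curvature endomorphism on $\Lambda^{p,q}$ assembled from $i\Theta_h$, the twist $t\,i\partial\dbar\lambda$, the Donnelly--Fefferman term, and the intrinsic curvature operator $\mathcal R$ of $(M,g)$ acting through $\Omega^p_M$. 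The role of the degree is read off from the eigenvalue formula for $[\,\cdot\,,\Lambda]$ on $(p,q)$-forms, in which the number of curvature eigenvalues entering with favourable sign is governed by $n-(p+q)$; by the Gauss equation $M\subset\C^N$ has nonpositive holomorphic bisectional curvature, so $\mathcal R$ contributes with the right sign once $p+q<n-2$. The upper bound $\Theta_h\le_{\mathrm{as}}\alpha\,\omega/\rho^2$ is absorbed as an error of size $\alpha\rho^{-2}$, and optimizing $s,t$ shows that the weighted estimate
\[
 \int_M\rho^{-2}|u|^2\,dV_g\le C\int_K|u|^2\,dV_g
\]
holds with a finite constant $C$ exactly when $\alpha<\tfrac{2(n-p-q-2)}{n-p-q}$, which is the stated threshold; the compact set $K$ is where the asymptotic curvature bound may fail.

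Granting this weighted estimate, finiteness follows by a weighted Rellich argument. Equip $\mathcal H$ with the inner product of $L^2(M,\rho^{-2}dV_g)$, which is dominated by the ambient $L^2$ norm because $\rho$ is bounded below. If $\dim_\C\mathcal H=\infty$, choose $\{u_j\}\subset\mathcal H$ orthonormal for this weighted product. Applied to $u_i-u_j\in\mathcal H$, the estimate gives $\|u_i-u_j\|^2_{L^2(M,\rho^{-2})}\le C\|u_i-u_j\|^2_{L^2(K)}$; since harmonic forms are smooth with interior elliptic bounds and $\{u_j\}$ is bounded on a neighbourhood of $K$, the Rellich theorem makes $\{u_j|_K\}$ precompact in $L^2(K)$, so $\{u_j\}$ is Cauchy in the weighted norm, contradicting orthonormality. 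Hence $\dim_\C\mathcal H<\infty$. Case (2) is the Hodge--Serre dual of case (1): the star operator identifies $\mathcal{H}^{p,q}_{L^2}(M,L)$ with $\mathcal{H}^{n-p,n-q}_{L^2}(M,L^{-1})$, sending $p+q>n+2$ to $(n-p)+(n-q)<n-2$ and the lower bound on $\Theta_h$ to the corresponding upper bound on $\Theta_{h^{-1}}=-\Theta_h$, and the two thresholds then match.

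The step I expect to be the main obstacle is the curvature bookkeeping that yields the sharp constant. One must combine, in a single lower bound for $\mathcal C$ of the form $-C\rho^{-2}$ with $C$ controlled by $\alpha$, four contributions of different natures: the one-sided (upper) hypothesis on the line-bundle curvature, the twist $t\,i\partial\dbar\lambda$, the Donnelly--Fefferman gain, and the intrinsic operator $\mathcal R$, whose favourable sign is available only in the range $p+q<n-2$ and must be extracted from the Gauss equation rather than from any pointwise size bound (indeed for $M=\C^n$ one has $\mathcal R=0$ and the whole estimate rests on the twist and the Donnelly--Fefferman term). Tracking how the one-sided bound on $\Theta_h$ survives the passage to the endomorphism $[\,\cdot\,,\Lambda]$---where eigenvalues in directions lying in both the holomorphic and antiholomorphic index sets would a priori enter with the wrong sign---and checking that the degree restriction together with the intrinsic sign closes this gap, is where the real work lies; the numerical threshold $\tfrac{2(n-p-q-2)}{n-p-q}$ is the precise break-even point of this optimization.
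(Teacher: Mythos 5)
Your overall architecture does match the paper's (Section 5, Theorem 5.1): represent $\mathcal{H}^{p,q}_{L^2}(M,L)$ by $L^2$ harmonic forms on the complete manifold $M$, run the Bochner--Kodaira--Nakano inequality with a Donnelly--Fefferman-type twist built from $\log\rho^2$, derive an a priori estimate controlling a harmonic form by its $L^2$ norm near a compact set, conclude finiteness by elliptic compactness, and get case (2) from case (1) by duality. Your closing Rellich argument and your Hodge-star reduction of case (2) are fine and are essentially what the paper does.

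The gap is the central curvature estimate, which you both defer (``the main obstacle'') and propose to prove by a mechanism that cannot work. The inequality everything rests on is
\begin{equation*}
\lVert\dbar u\rVert^2_{\widetilde{h}}+(1+\varepsilon^{-1})\lVert\dbarstar_\varphi u\rVert^2_{\widetilde{h}}\;\geq\;\int_M\big\langle[i\partial\dbar(\varphi+\psi),\Lambda]u,u\big\rangle_{\widetilde{h}}\,dV-(1+\varepsilon)\lVert\dbar\psi\lrcorner u\rVert^2_{\widetilde{h}},
\end{equation*}
and no intrinsic curvature operator $\mathcal{R}$ of $(M,g)$ occurs in it: the Nakano identity compares the $\dbar$-Laplacian with the \emph{nonnegative} $(1,0)$-Laplacian, so the only curvature present is that of the twisted line-bundle metric, plus the gradient error. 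Hence the Gauss equation / nonpositive bisectional curvature of $M\subset\C^N$ is not available as a source of positivity, and your plan to close the sign problem ``by the intrinsic sign'' has nothing to act through. What actually produces the degree-dependent gain in the paper is its Lemma 5.2: the restriction of $i\partial\dbar\log|z|^2$ to $T_xM$ has at least $n-1$ eigenvalues equal to $2/\rho^2$ and one in $[0,2/\rho^2]$. This is exactly the content of your identity $i\partial\dbar\lambda+i\partial\lambda\wedge\dbar\lambda=\rho^{-2}\omega$ (the gradient term has rank one), but you extract from it only the two \emph{upper} bounds $|\partial\lambda|^2\leq\rho^{-2}$ and $i\partial\dbar\lambda\leq\rho^{-2}\omega$, which is precisely the useless half: in the commutator eigenvalue $\lambda_{IJ}=\sum_{j\in I}\lambda_j+\sum_{j\in J}\lambda_j-\sum_{j=1}^n\lambda_j$, the term that must be positive and large in low degree is $-\sum_{j=1}^n\lambda_j$, so the twist helps only through the \emph{lower} (trace) bound $\mathrm{Tr}_\omega\big(i\partial\dbar\log\rho^2\big)\geq 2(n-1)/\rho^2$, which you discarded; upper bounds alone give nothing (a positive form such as $c\,\omega/\rho^2$ contributes $c(p+q-n)/\rho^2<0$). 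With Lemma 5.2 the paper gets $\lambda_{IJ}(\psi)\geq 2(n-1-p-q)/\rho^2$ for $\psi=-\log\rho^2$, subtracts the gradient error $2(1+\varepsilon)/\rho^2$, and inserts $\lambda_{IJ}(\varphi)\geq -(n-p-q)\alpha/\rho^2$ to land exactly on the threshold $\alpha<\frac{2(n-p-q-2)}{n-p-q}$; your assertion that ``optimizing $s,t$'' yields this threshold is never derived and, along the route you sketch, could not be. One last point: your worry about index pairs with $I\cap J\neq\emptyset$ is legitimate --- under the one-sided hypothesis on $\Theta_h$, $\lambda_{IJ}(\varphi)$ has no lower bound for such pairs, and this is the one step the paper itself glosses over in passing from its Theorem 5.1 to the stated theorem --- but the intrinsic curvature of $M$ cannot be the cure, for the reason above: it simply does not enter the identity.
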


\begin{remark}
{\rm In particular, $\dim_\C {H}^{0}_{L^2}(M,L)<\infty$ if $n>2$ and $
\Theta_h\leq_{\mathrm{as}}\alpha\cdot\omega/\rho^2
$ with $\alpha<\frac{2(n-2)}n$. It is remarkable that the better bound  $\alpha<n-1$ is already sufficient if one applies  Theorem \ref{th:finite_0}. }
\end{remark}

Without curvature assumption we have the following finiteness result on $L^p$ plurigenera.

 \begin{theorem}\label{th:affine}
 If $M$ is a smooth affine-algebraic variety, then $P_{m,L^p}(M)<\infty$ for all $p$ and $m$.
  \end{theorem}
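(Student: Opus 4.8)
The plan is to reduce this transcendental finiteness statement to a standard finite-dimensionality statement in projective algebraic geometry, by passing to a smooth compactification of $M$ and analyzing the induced metric near the boundary.

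First I would choose a smooth projective compactification. Since $M$ is a smooth affine-algebraic variety, Hironaka's resolution of singularities provides a smooth projective variety $\bar M$ containing $M$ as a Zariski-open subset such that $D:=\bar M\setminus M$ is a simple normal crossing divisor. For every integer $k\ge 0$ the line bundle $K_{\bar M}^{\otimes m}\otimes\mathcal O_{\bar M}(kD)$ is coherent on the compact variety $\bar M$, so $\dim_{\C}H^0(\bar M,K_{\bar M}^{\otimes m}\otimes\mathcal O_{\bar M}(kD))<\infty$, and restriction to the dense open set $M$ is injective (a section vanishing on $M$ vanishes on all of $\bar M$). Consequently it suffices to produce an integer $k=k(m,p)$, \emph{independent of $s$}, such that every $s\in H^0_{L^p}(M,K_M^{\otimes m})$ extends across $D$ to a global section of $K_{\bar M}^{\otimes m}\otimes\mathcal O_{\bar M}(kD)$; this gives $P_{m,L^p}(M)\le \dim_{\C}H^0(\bar M,K_{\bar M}^{\otimes m}\otimes\mathcal O_{\bar M}(kD))<\infty$.

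Next I localize near $D$. Fix a boundary point and coordinates $(w_1,\dots,w_n)$ on $\bar M$ with $D=\{w_1\cdots w_l=0\}$, and write $s=f\,(dw_1\wedge\cdots\wedge dw_n)^{\otimes m}$, where $f$ is holomorphic on the punctured polydisc $M\cap U$. Letting $G:=\det(g_{i\bar j})$ denote the determinant of the induced metric in these coordinates and $dV_w$ the coordinate volume, one computes $|s|_h^2=|f|^2G^{-m}$ and hence
\[
\int_{M\cap U}|s|_h^p\,dV_g=\int_{M\cap U}|f|^p\,G^{\,1-mp/2}\,dV_w .
\]
The decisive input is that the induced metric is \emph{tame} at infinity: because $M$ is affine-algebraic, the Euclidean coordinates $z_1,\dots,z_N$ restrict to rational functions on $\bar M$ with poles along $D$, so the entries $g_{i\bar j}=\sum_k\partial_{w_i}z_k\,\overline{\partial_{w_j}z_k}$, and therefore both $G$ and $\rho=|z|$, are controlled by the local defining monomials of $D$. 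After refining the log resolution if necessary, there are integers $b_1,\dots,b_l$ and a constant $C$ with
\[
C^{-1}\prod_{j=1}^l|w_j|^{2b_j}\le G\le C\prod_{j=1}^l|w_j|^{2b_j}
\]
on $M\cap U$ (a logarithmic correction, if present, is harmless below). Establishing these two-sided monomial bounds — i.e. that the degeneration of the Euclidean-induced volume at infinity is of normal-crossing type — is the step I expect to be the main obstacle, since $G$ is only real-analytic and one must rule out extra cancellation along the deeper strata of $D$ by further blow-ups.

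Finally I convert integrability into a uniform pole bound. A sub-mean-value (interior Cauchy) estimate on polydiscs of polyradius comparable to $(|w_1|,\dots,|w_n|)$, over which the weight $G^{1-mp/2}$ is essentially constant by the bounds above, turns finiteness of the displayed integral into a pointwise bound $|f|\lesssim\prod_j|w_j|^{-a_j}$ with exponents $a_j$ depending only on $b_j$, $m$ and $p$ (the constant depends on $s$, but the exponents do not). By Riemann's extension theorem $f$ is then meromorphic across $D$ with pole order at most $k_j:=\lceil a_j\rceil$ along $\{w_j=0\}$. Since the local meromorphic extensions agree on the dense set $M$, they patch to a global section of $K_{\bar M}^{\otimes m}\otimes\mathcal O_{\bar M}(kD)$ with $k:=\max_j k_j$ depending only on $m$ and $p$. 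Combined with the reduction of the first paragraph, this yields $P_{m,L^p}(M)<\infty$ for all $m$ and $p$.
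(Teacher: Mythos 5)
Your proposal is correct in outline, but it takes a genuinely different route from the paper's. The paper deliberately avoids compactification: it realizes $M$ as a branched covering $\pi\colon M\to\mathbb{C}^n$ (Proposition \ref{prop:affine criteria}), uses Demailly's formula expressing the volume ratios $\lambda_j$ on the sheets through the partial Jacobians $J_{KL}$ of the defining polynomials, assembles from a given $L^p$ section a plurisubharmonic function of polynomial growth on $\mathbb{C}^n$, and bounds $\min_j\mathrm{ord}_{a_j'}(s)$ at the preimages of a suitable point uniformly via a H\"ormander $\bar{\partial}$-argument (Lemma \ref{lm:psh functions with polynomial growth}), after which finite-dimensionality follows from the elementary Lemma \ref{lm:finite}. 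You instead compactify, in the spirit of Sakai's proof for $p=2/m$, which the authors explicitly mention and set aside because it is ``not clear'' that it extends to general $p$; your plan supplies exactly the missing ingredient, namely two-sided monomial control of $G=\det(g_{i\bar j})$ along the boundary divisor, which converts $L^p$-integrability into a uniform pole bound $k=k(m,p)$ and hence $P_{m,L^p}(M)\le\dim_{\mathbb{C}}H^0(\bar M,K_{\bar M}^{\otimes m}\otimes\mathcal{O}_{\bar M}(kD))<\infty$. The paper's route buys self-containedness (only Remmert, Demailly's computation and H\"ormander's estimates); yours, once completed, is more geometric and reduces everything to coherence on a fixed projective compactification.

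The step you flag as the main obstacle is indeed the only real gap in the write-up, but it is not a step that fails, and it can be closed by standard tools; notably, the quantity you must tame is the same one the paper tames via Demailly's formula. By Cauchy--Binet, $G=\sum_{|K|=n}|J_K|^2$, where $J_K=\det\left(\partial z_{k_i}/\partial w_j\right)$ runs over the $n\times n$ minors of the Jacobian of $(z_1,\dots,z_N)$; each $J_K$ is a rational function on $\bar M$ which is regular on $M$, and the $J_K$ have no common zero on $M$ since $G>0$ there. Locally near $D$ write $J_K=w^{-d}h_K$ with $h_K$ holomorphic; the ideal generated by the $h_K$ has cosupport contained in $D$, so Hironaka principalization provides a further modification, an isomorphism over $M$, after which $\sum_K|h_K|^2$ is comparable to the square modulus of a single monomial, and the Jacobian factor introduced by the modification is itself a monomial times a unit, so the two-sided bound for $G$ holds on the refined compactification; you then simply run your argument there. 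Granting this lemma, the rest of your proof is sound: $|f|^p$ is plurisubharmonic for every $p>0$, your polydiscs avoid $D$ and have volume comparable to $\prod_{j}|w_j^0|^2$, the weight $G^{1-mp/2}$ is essentially constant on them, and the resulting exponents $a_j$ depend only on $b_j$, $m$, $p$, so the pole order $k$ is uniform in $s$ as required.
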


Finiteness of $P_{m,L^{2/m}}(M)$  was first proved by  Sakai \cite{Sakai}  by using a smooth compactification of $M$ (which exists, by Hironaka's resolution of singularities). It is not clear whether his method  works for  general $L^p$ plurigenera, since $P_{m,L^{2/m}}(M)$ depends only on the complex structure. Here we shall present a self-contained analytic approach to  Theorem \ref{th:affine} by using Demailly's calculation of $dV_g$ for affine-algebraic varieties (cf. \cite{DemaillyAffine}).

 \begin{corollary}\label{cor:affine}
If\/ $M$ is a smooth affine-algebraic variety, then the Ricci curvature of $g$ satisfies
$$
\limsup_{\rho\rightarrow \infty} \rho^2\cdot {\rm Ric}_g = 0.
$$
\end{corollary}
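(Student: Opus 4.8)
The plan is to argue by contradiction, playing the infiniteness statement announced in Remark (2) following Theorem~\ref{th:finite_0} (to be proved in \S\,7) against the finiteness of $L^2$ plurigenera supplied by Theorem~\ref{th:affine}. First I would record two facts coming from the geometry of complex submanifolds of flat space. By the Gauss equation the holomorphic bisectional curvature of $(M,g)$ is nonpositive, so the Ricci form satisfies ${\rm Ric}_g\le 0$; consequently $\rho^2\,{\rm Ric}_g\le 0$ pointwise and hence $\limsup_{\rho\to\infty}\rho^2\,{\rm Ric}_g\le 0$, which already gives the correct upper bound. The same nonpositivity shows that the canonical bundle $K_M$, equipped with the metric $(dV_g)^{-1}$ induced by $g$, has curvature $\Theta_{K_M}=-{\rm Ric}_g\ge 0$; that is, $K_M$ is semipositive, and therefore so is every power $L=K_M^{\otimes(m-1)}$ carrying the metric $(dV_g)^{-(m-1)}$, whose curvature is $-(m-1){\rm Ric}_g$.

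It then remains only to exclude $\limsup_{\rho\to\infty}\rho^2\,{\rm Ric}_g<0$. Suppose this were the case; then there is $\delta>0$ such that ${\rm Ric}_g\le -\delta\,\omega/\rho^2$ for $\rho$ large, equivalently $-(m-1){\rm Ric}_g\ge_{\rm as}(m-1)\delta\,\omega/\rho^2$ for every $m$. Taking $L=K_M^{\otimes(m-1)}$ as above, its curvature $\Theta_h$ satisfies $\Theta_h\ge_{\rm as}\alpha\,\omega/\rho^2$ with $\alpha=(m-1)\delta$. Choosing $m$ so large that $(m-1)\delta>8n$, the semipositive bundle $L$ meets the hypotheses of Remark (2), which yields ${\rm dim}_{\mathbb C}H^0_{L^2}(M,K_M\otimes L)=\infty$. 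But $K_M\otimes L=K_M^{\otimes m}$ with the induced metric $(dV_g)^{-m}$, so this quantity is exactly $P_{m,L^2}(M)$, and $P_{m,L^2}(M)=\infty$ contradicts Theorem~\ref{th:affine}. Hence $\limsup_{\rho\to\infty}\rho^2\,{\rm Ric}_g\ge 0$, and combined with the upper bound this forces the limsup to be $0$.

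The substantive content is carried entirely by the two cited theorems, so the only steps requiring care are bookkeeping, and this is where I expect the mild difficulties to lie. I must pin down the sign and normalization in the identity $\Theta_{K_M}=-{\rm Ric}_g$ (so that semipositivity of $K_M$ really follows from ${\rm Ric}_g\le 0$), and I must fix the convention by which $\rho^2\,{\rm Ric}_g$ is read as a scalar — most naturally as the largest eigenvalue $\sup_{|v|=1}{\rm Ric}_g(v,\bar v)$ of the Ricci tensor — so that the assumption $\limsup\rho^2\,{\rm Ric}_g<0$ translates into the \emph{tensor} bound ${\rm Ric}_g\le -\delta\,\omega/\rho^2$ comparable against the threshold $8n$. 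Finally I should note that the applicability of the infiniteness result for all large $m$ is automatic here, since semipositivity is uniform in $m$ while the constant $\alpha=(m-1)\delta$ grows without bound; no analytic estimate is needed beyond invoking Remark (2) and Theorem~\ref{th:affine}.
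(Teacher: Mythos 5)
Your proof is correct and is essentially the paper's own argument: the paper likewise argues by contradiction, converting $\limsup_{\rho\to\infty}\rho^2\,{\rm Ric}_g<0$ into the hypothesis of its Corollary \ref{cor:infinite} (whose content is exactly your construction of the semipositive bundle $L=K_M^{\otimes(m-1)}$ fed into Proposition \ref{th:infinite}/Corollary \ref{cor:infinite_0}, i.e.\ Remark (2)) and contradicting Theorem \ref{th:affine}. The only difference is that you unfold the derivation of that corollary explicitly, which the paper leaves implicit.
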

\begin{remark}
{\rm We refer to \cite{SasakiShiga},  \cite{Vitter} and \cite{Yang} for more results on growth of curvatures for complex hypersurfaces in $\mathbb C^N$.}
\end{remark}

Some natural questions arise.

 \begin{problem}\label{prob:Affine_1}
{\rm Let $M\hookrightarrow {\mathbb C}^N$ be a closed $1-$dimensional complex submanifold. Is it possible to conclude that $M$ is affine-algebraic if and only if $P_{m,L^2}(M)<\infty$ for all $m\ge 1$?}
  \end{problem}

  \begin{problem}\label{prob:algebraic}
 {\rm Let $M\hookrightarrow {\mathbb C}^N$ be a non-algebraic complex submanifold of dimension $n>1$. Is it possible to conclude that either $P_{m,L^2}(M)=0$ for all $m$ or $P_{m,L^2}(M)=\infty$ for some $m$?}
  \end{problem}

A partial answer to Problem \ref{prob:Affine_1} is given as follows.

 \begin{theorem}\label{th:plurigenera_curve}
  Let $f$ be an entire function in $\mathbb C$ and  $\Gamma(f)$ the graph of $f$ in $\mathbb C^2$. Then $
 \Gamma(f)
 $
 is affine-algebraic if and only if $P_{m,L^2}(\Gamma(f))<\infty$ for all $m\ge 1.
  $
  Moreover, if $f$ is transcendental then $P_{3,L^2}(\Gamma(f))=\infty$.
 \end{theorem}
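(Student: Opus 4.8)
The plan is to pass from the graph to a weighted Bergman space on $\mathbb C$, and then to split the assertion into the (easy) algebraic direction and the (substantial) infiniteness ``moreover''. Parametrizing $M=\Gamma(f)$ by the projection $z\mapsto(z,f(z))$, which is a biholomorphism onto $\mathbb C$, the induced metric is $g=(1+|f'|^2)|dz|^2$, so $dV_g=(1+|f'|^2)\,dA$ (with $dA$ Lebesgue area) and the frame $dz$ trivializes $K_M$. A direct computation of the norm of $(dz)^{\otimes m}$ in $h=(dV_g)^{-m}$ gives $|u\,(dz)^{\otimes m}|_h^2=|u|^2(1+|f'|^2)^{-m}$, whence for a holomorphic section $s=u\,(dz)^{\otimes m}$ one has $\int_M|s|_h^2\,dV_g=\int_{\mathbb C}|u|^2(1+|f'|^2)^{1-m}\,dA$. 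In particular
$$
P_{3,L^2}(\Gamma(f))={\rm dim}_{\mathbb C}\Big\{u\ \text{entire}:\ \int_{\mathbb C}\frac{|u|^2}{(1+|f'|^2)^2}\,dA<\infty\Big\}.
$$
Since $\Gamma(f)$ is affine-algebraic precisely when $f$ is a polynomial, the implication ``affine-algebraic $\Rightarrow P_{m,L^2}<\infty$ for all $m$'' is immediate from Theorem \ref{th:affine}. Thus the entire statement, equivalence included, follows once the ``moreover'' part is established: if $f$ is transcendental then the space displayed above is infinite-dimensional.

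For the infiniteness I would work with $\phi:=\log(1+|f'|^2)$, a nonnegative subharmonic weight with $\Delta\phi=4|f''|^2/(1+|f'|^2)^2\ge0$, vanishing only on the discrete set $\{f''=0\}$ (note $f''\not\equiv0$ since $f$ is transcendental). The target is the weighted Bergman space $A^2(\mathbb C,e^{-2\phi})$. First I would record the dichotomy coming from value distribution: $\tfrac14\int_{\mathbb C}\Delta\phi=\int_{\mathbb C}|f''|^2/(1+|f'|^2)^2\,dA$ is, up to a constant, the spherical (Ahlfors--Shimizu) area of the image of $f'$ counted with multiplicity, which is finite exactly when $f'$ is rational, i.e. when $f$ is a polynomial; for transcendental $f$ this total curvature mass is infinite, and in fact $\mu(D_r):=\tfrac1{2\pi}\int_{D_r}\Delta\phi$ grows faster than $\log r$. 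The plan is to convert this infinite mass into infinitely many linearly independent members of $A^2(\mathbb C,e^{-2\phi})$ by H\"ormander's $L^2$-estimates: choose a sequence $z_k\to\infty$ lying where $|f'|$ is large (so $e^{-2\phi}$ is very small there, leaving ample room), solve $\bar\partial v_k=\bar\partial\chi_k$ for holomorphic local peaks $\chi_k$ cut off in small disks about $z_k$ on which $\Delta\phi$ is bounded below, and set $u_k=\chi_k-v_k$. The estimate $\int|v_k|^2e^{-2\phi}\le\int|\bar\partial\chi_k|^2e^{-2\phi}/\Delta\phi$ is finite because the right-hand integrand is supported on a fixed-size annulus where all quantities are controlled, so each $u_k$ is a genuine entire element of $A^2$, and prescribing the values or high-order jets at the $z_k$ makes them independent.

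The main obstacle is exactly this conversion, and it is what separates the transcendental case from the polynomial one. The weight $e^{-2\phi}\le1$ is comparable to $1$ on the possibly large set $\{|f'|\ \text{small}\}$ (a whole strip when $f'=\sin$, for instance), so neither monomials nor naive peaks are automatically in $A^2$, and a log-pole weight $2\phi+2(k{+}1)\log|z|$ forcing vanishing at a point destroys finiteness of the plain $e^{-2\phi}$-norm at infinity. The crux is therefore to leverage the super-logarithmic growth of $\mu(D_r)$ to produce infinitely many well-separated disks, tending to infinity, each carrying enough positivity of $\Delta\phi$ and enough decay of $e^{-2\phi}$ to support a localized $A^2$-section with a prescribed nonvanishing jet; a Wiman--Valiron analysis of $f'$ near its maximum-modulus points, where $f'(z)\approx f'(a_r)(z/a_r)^{\nu(r)}$ with central index $\nu(r)\to\infty$, supplies such disks and quantifies the available dimension. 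Carrying out the norm bookkeeping so that the constructed sections are simultaneously entire, of finite $e^{-2\phi}$-norm, and linearly independent is the technical heart; once in place it yields ${\rm dim}_{\mathbb C}A^2(\mathbb C,e^{-2\phi})=\infty$, hence $P_{3,L^2}(\Gamma(f))=\infty$, completing the ``moreover'' statement and the nontrivial direction of the equivalence.
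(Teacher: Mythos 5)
Your reduction to the weighted Bergman space $\{u\ \text{entire}: \int_{\mathbb C}|u|^2(1+|f'|^2)^{-2}\,dA<\infty\}$ and your treatment of the algebraic direction via Theorem \ref{th:affine} coincide with the paper. But the ``moreover'' part --- which you yourself identify as the entire content of the theorem --- is left as a plan, not a proof. The gap is precisely the point you flag: you need the constructed solutions $u_k=\chi_k-v_k$ to be linearly independent \emph{and} to have finite $(1+|f'|^2)^{-2}$-norm, and you correctly observe that the naive devices conflict (a pole $\log|z-a|^2$ in the weight forces vanishing at $a$ but ruins the comparison with the target weight at infinity, while the bare weight $2\log(1+|f'|^2)$ gives no control of values at all). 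Your proposed escape --- Wiman--Valiron analysis of $f'$ near maximum-modulus points, producing disks that ``support a localized $A^2$-section with a prescribed nonvanishing jet'' --- is not carried out, and it is not clear how prescribing a jet inside one disk is to be reconciled with Hörmander's estimate, which only controls the global $L^2$ norm of the correction $v_k$, not its values at the other nodes. As written, nothing prevents the $u_k$ from spanning a finite-dimensional space.

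The paper resolves exactly this tension with one choice of weight you did not consider: take the interpolation nodes to be the zeros of $f_1:=f'$ (or, if these are finite in number, the solutions of $f_1=b$ for a suitable $b$ with $|b|<1/2$, which form an infinite set by Picard's big theorem since $f'$ is transcendental), and use
\[
\varphi:=\log|f_1-b|^2+\log\bigl(1+|f_1-b|^2\bigr).
\]
The log-poles of $\varphi$ sit at \emph{all} the nodes simultaneously, so the $L^2$ solution $u_k$ of $\bar\partial u_k=\bar\partial\chi_k$ (with $\chi_k$ cut off near the $k$-th node $a_k$) automatically vanishes at every node $a_j$, $j\neq k$; hence $f^\ast_k:=\chi_k-u_k$ satisfies $f^\ast_k(a_k)=1$, $f^\ast_k(a_j)=0$ for $j\neq k$, and independence is immediate. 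The finiteness of the target norm is saved by the elementary inequality $|f_1-b|^2(1+|f_1-b|^2)\le(1+|f_1-b|^2)^2$, i.e.\ $e^{-\varphi}\ge(1+|f_1-b|^2)^{-2}\gtrsim(1+|f_1|^2)^{-2}$: the pole factor is $|f_1-b|^2$, which is \emph{dominated} by $1+|f_1-b|^2$, unlike $|z|^2$. Note also that this is exactly where the exponent $m=3$ (two factors of $(1+|f'|^2)$, one to absorb the pole) enters; for $m=2$ the trick fails, which is why the paper leaves $P_{2,L^2}(\Gamma(f))$ as an open problem. The curvature positivity needed for Hörmander comes from $\varphi_{z\bar z}\ge|f_1'|^2/(1+|f_1|^2)^2$ on annuli around the nodes, which is the only place your value-distribution considerations (the spherical derivative of $f'$) are actually needed --- and only qualitatively, not through any growth estimate on the Ahlfors--Shimizu characteristic.
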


 \begin{remark}
  {\rm (1) In particular, we have plenty of Riemann surfaces embedded in $\mathbb C^2$ which are\/ {\it biholomorphic}\/ to $\mathbb C$ but not all $L^2$ plurigenera are finite.}

{\rm (2) It may happen that $P_{m,L^2}(M)=0$ for all $m>0$ when $M$ is non-algebraic and $n>1$. For instance, we may consider the cylinder $M=\mathbb{C}\times \Gamma(f)$ in $\mathbb C^3$, where $f$ is a transcendental entire function in $\mathbb C$, in view of  Theorem \ref{th:plurigenera_curve}.}

{ \rm (3) Mok \cite{MokAffine} and Demailly \cite{DemaillyAffine} gave geometric and complex-analytic conditions for a complete K\"ahler manifold to be\/ {\it biholomorphic}\/ to an affine-algebraic variety.}

 \end{remark}

The methods of this paper also work for investigating finiteness theorems on harmonic functions of minimal submanifolds in $\mathbb R^N$. We shall return to this subject in a future paper.

\section{Preliminaries}
\subsection{Some differential inequalities} Let $X$ be an $n-$dimensional Riemannian manifold with a metric given locally by $g=\sum_{j,k} g_{jk}dx_j dx_k$. Let $(g^{jk})=(g_{jk})^{-1}$.  The Laplace operator is given by
 $$
 \Delta =\frac1{\sqrt{{\rm det}(g_{jk})}} \sum_{j,k} \frac{\partial}{\partial x_j} \left[\sqrt{{\rm det}(g_{jk})} g^{jk}\frac{\partial}{\partial x_k}\right].
  $$
    Let $d V=\sqrt{{\rm det}(g_{jk})} dx_1\cdots dx_n$ be the Riemannian volume element. For real-valued functions $u\in C^1_0(X)$ and $v\in C^2(X)$, integral by parts gives
  \begin{equation}\label{eq:GreenFormula}
  \int_X u \Delta v dV =-\int_X \nabla u \nabla v dV
  \end{equation}
  where
  $$
  \nabla u=\left(\sum_j g^{1j}\frac{\partial u}{\partial x_j}, \cdots, \sum_j g^{nj}\frac{\partial u}{\partial x_j}\right),
  $$
  $$
  \nabla u \nabla v=\sum_{j,k} g^{jk} \frac{\partial u}{\partial x_j} \frac{\partial v}{\partial x_j}.
    $$
    Set $|\nabla u|^2=\nabla u \nabla u$. Then we have

   \begin{proposition}\label{prop:Hardy-Sobolev}
    Let $\eta:{\mathbb R}\rightarrow [0,\infty)$ be a $C^1$ function with $\eta'>0$. Let $0<\gamma< 1$. For  $\phi\in C^1_0(X)$ and $\psi\in C^2(X)$ we have
   \begin{equation}\label{eq:Laplace+}
   \int_X \phi^2 \left[ \eta (\psi)\Delta\psi + (1-\gamma) \eta'(\psi){|\nabla\psi|^2}\right] dV\le \frac1{\gamma} \int_X \frac{\eta^2(\psi)}{\eta'(\psi)} |\nabla\phi|^2 dV
  \end{equation}
  and
  \begin{equation}\label{eq:Laplace-}
   \int_X {\phi^2}\left[\frac{\Delta\psi}{\eta (-\psi)} +(1-\gamma) \frac{\eta'(-\psi)}{\eta^2(-\psi)}{|\nabla\psi|^2}\right] dV \le \frac1{\gamma} \int_X \frac{|\nabla\phi|^2}{\eta'(-\psi)} dV.
  \end{equation} \end{proposition}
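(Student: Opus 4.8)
The plan is to derive both inequalities from the single integration-by-parts identity \eqref{eq:GreenFormula}, applied with $v=\psi$ and a suitably weighted first factor $u$ built from $\phi$ and $\psi$, followed by a Cauchy--Schwarz estimate on the resulting gradient cross-term and a weighted Young inequality tuned to the parameter $\gamma$. The role of $\gamma$ is precisely to leave a small positive multiple of $\int_X\phi^2\eta'|\nabla\psi|^2\,dV$ on the left, which is then used to absorb the cross-term; this is why the coefficient $(1-\gamma)$, rather than $1$, appears in front of the $|\nabla\psi|^2$-term.

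For \eqref{eq:Laplace+} I would take $u=\phi^2\eta(\psi)$, which lies in $C^1_0(X)$ since $\phi\in C^1_0$ and $\eta\in C^1$, together with $v=\psi\in C^2$. Then $\nabla u=2\phi\,\eta(\psi)\nabla\phi+\phi^2\eta'(\psi)\nabla\psi$, so \eqref{eq:GreenFormula} yields the identity
\begin{equation*}
\int_X\phi^2\big[\eta(\psi)\Delta\psi+\eta'(\psi)|\nabla\psi|^2\big]\,dV=-2\int_X\phi\,\eta(\psi)\,\nabla\phi\cdot\nabla\psi\,dV.
\end{equation*}
Subtracting $\gamma\int_X\phi^2\eta'(\psi)|\nabla\psi|^2\,dV$ from both sides turns the left-hand side into exactly the integrand of \eqref{eq:Laplace+}. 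It then remains to bound the cross-term: using $\eta\ge0$, Cauchy--Schwarz gives $-2\phi\,\eta(\psi)\nabla\phi\cdot\nabla\psi\le 2|\phi|\,\eta(\psi)\,|\nabla\phi|\,|\nabla\psi|$, and the weighted Young inequality $2ab\le a^2+b^2$ with $a=\sqrt{\gamma\,\eta'(\psi)}\,|\phi|\,|\nabla\psi|$ and $b=\eta(\psi)(\gamma\eta'(\psi))^{-1/2}|\nabla\phi|$ produces precisely $\gamma\,\eta'(\psi)\phi^2|\nabla\psi|^2+\gamma^{-1}\frac{\eta^2(\psi)}{\eta'(\psi)}|\nabla\phi|^2$. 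The first term cancels the $-\gamma\int_X\phi^2\eta'(\psi)|\nabla\psi|^2\,dV$ left over from the subtraction, and the second is the desired right-hand side.

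Inequality \eqref{eq:Laplace-} follows in exactly the same way with the reciprocal weight: take $u=\phi^2/\eta(-\psi)$ (assuming $\eta(-\psi)>0$ wherever the integrand is evaluated, so that $u\in C^1_0$), for which $\tfrac{d}{d\psi}\,\eta(-\psi)^{-1}=\eta'(-\psi)/\eta^2(-\psi)>0$. The identity coming from \eqref{eq:GreenFormula} now reads
\begin{equation*}
\int_X\phi^2\Big[\frac{\Delta\psi}{\eta(-\psi)}+\frac{\eta'(-\psi)}{\eta^2(-\psi)}|\nabla\psi|^2\Big]\,dV=-2\int_X\frac{\phi}{\eta(-\psi)}\,\nabla\phi\cdot\nabla\psi\,dV,
\end{equation*}
and after subtracting $\gamma\int_X\phi^2\frac{\eta'(-\psi)}{\eta^2(-\psi)}|\nabla\psi|^2\,dV$ the same Cauchy--Schwarz plus Young step, now with $a=\sqrt{\gamma}\,\eta'(-\psi)^{1/2}\eta(-\psi)^{-1}|\phi|\,|\nabla\psi|$ and $b=(\gamma\eta'(-\psi))^{-1/2}|\nabla\phi|$, gives the bound $\gamma^{-1}\int_X\eta'(-\psi)^{-1}|\nabla\phi|^2\,dV$.

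The computations are routine once the test functions are fixed; the only genuine choices are $u=\phi^2\eta(\psi)$ (resp. $u=\phi^2/\eta(-\psi)$) and the exact split in Young's inequality, which must be arranged so that the $|\nabla\psi|^2$-coefficient matches $\gamma\eta'(\psi)$ (resp. $\gamma\eta'(-\psi)/\eta^2(-\psi)$) in order to cancel the subtracted term. The two points requiring minor care, rather than any real obstacle, are checking that $u\in C^1_0(X)$ so that \eqref{eq:GreenFormula} legitimately applies, and, for \eqref{eq:Laplace-}, the positivity of $\eta(-\psi)$ needed to make the weight finite; both are harmless in the intended applications.
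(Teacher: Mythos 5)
Your proof is correct and follows essentially the same route as the paper: integrate by parts against $u=\phi^2\eta(\psi)$ via \eqref{eq:GreenFormula}, then absorb the cross-term with a $\gamma$-weighted Cauchy--Schwarz/Young estimate. The only cosmetic difference is that for \eqref{eq:Laplace-} the paper simply substitutes $\eta(t)\mapsto 1/\eta(-t)$ into \eqref{eq:Laplace+} instead of redoing the computation (and note your positivity caveat is vacuous: a nonnegative function on ${\mathbb R}$ with $\eta'>0$ is automatically everywhere positive).
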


 \begin{proof}
 By $(\ref{eq:GreenFormula})$ we have
 \begin{eqnarray*}
  \int_X \phi^2 \eta (\psi) \Delta\psi dV & = & -\int_X \nabla \psi  \nabla \left(\eta(\psi)\phi^2\right)dV\\
&  = & -2 \int_X \phi \eta(\psi)\nabla \psi \nabla \phi dV - \int_X \phi^2 \eta'(\psi)|\nabla \psi|^2 dV,
\end{eqnarray*}
so that
\begin{eqnarray*}
 && \int_X \phi^2\eta (\psi) \Delta\psi dV+\int_X \phi^2 \eta'(\psi) |\nabla \psi|^2 dV\\
 & = &  -2 \int_X \phi \eta(\psi)\nabla \psi \nabla \phi dV\\
 & \le & \gamma  \int_X  \phi^2 \eta'(\psi)|\nabla \psi|^2 dV+ \frac1{\gamma} \int_X \frac{\eta^2(\psi)}{\eta'(\psi)}|\nabla\phi|^2 dV
\end{eqnarray*}
in view of the Schwarz inequality, i.e., (\ref{eq:Laplace+}) holds.

Applying $(\ref{eq:Laplace+})$ with $\eta(t)$ replaced by $1/\eta(-t)$, we immediately obtain $(\ref{eq:Laplace-})$.
\end{proof}

\begin{remark}
{\rm By taking $\eta(t)=t$ in $(\ref{eq:Laplace+})$, we immediately obtain the following Caccioppoli-type inequality}
 \begin{equation}\label{eq:CacciopoliIneq}
\int_{X} \phi^2  {|\nabla\psi|^2} dV \le \frac1{\gamma(1-\gamma)} \int_{X} \psi^2 |\nabla\phi|^2 dV - \frac1{1-\gamma}\int_{X}\phi^2 \psi \Delta\psi dV.
 \end{equation}

  \end{remark}

\subsection{A finiteness theorem} In order to deal with the $L^p$ Bergman space for $0<p<1$, we need to extend a classical finiteness theorem of F. Riesz for normed vector spaces to more general cases.

 \begin{definition}
{\rm Let $E$ be a complex vector space. A\/ {\it quasi-norm}\/ ${\rm q}$ for $E$ is a mapping from $E$ into $[0,\infty)$ such that}
\begin{enumerate}
{\rm \item ${\rm q}(x)=0$ if and only if $x=0$,
\item ${\rm q}(cx)=|c|^\alpha {\rm q}(x)$ for  $c\in {\mathbb C}$,  $x\in E$ and some constant $\alpha>0$,
\item ${\rm q}(x+y)\le {\rm q}(x)+{\rm q}(y)$ for  $x,y\in E$.}
\end{enumerate}
\end{definition}

Notice that $d(x,y):={\rm q}(x-y)$ defines a metric for $E$. Equipped with the metric topology, $E$ is called a quasi-normed space, which is naturally  a\/ {\it topological}\/ vector space. It follows from Theorem 1.21 and Theorem 1.22 in \cite{RudinBook} that

\begin{proposition}\label{lm:homeom}
Every $n-$dimensional quasi-normed space is homeomorphic to ${\mathbb C}^n$.
\end{proposition}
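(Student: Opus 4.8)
The plan is to verify that the quasi-norm topology turns $E$ into a finite-dimensional Hausdorff topological vector space, and then to invoke the cited results of Rudin, whose content is that the vector-space topology on a finite-dimensional Hausdorff topological vector space is unique, so that \emph{any} linear isomorphism from $\mathbb{C}^n$ onto it is automatically a homeomorphism. First I would record that $d(x,y):=\mathrm{q}(x-y)$ is a genuine metric. Axiom (1) gives $d(x,y)=0\iff x=y$; symmetry follows from axiom (2) with $c=-1$, since $\mathrm{q}(-z)=|-1|^{\alpha}\mathrm{q}(z)=\mathrm{q}(z)$; and the triangle inequality for $d$ is precisely axiom (3). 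Moreover $d$ is translation invariant, $d(x+z,y+z)=d(x,y)$, which I will use without further comment.

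Next I would check that $E$, equipped with the metric topology of $d$, is a topological vector space, i.e.\ that addition $E\times E\to E$ and scalar multiplication $\mathbb{C}\times E\to E$ are continuous. For addition this is immediate from axiom (3):
\[
\mathrm{q}\bigl((x+y)-(x_0+y_0)\bigr)\le \mathrm{q}(x-x_0)+\mathrm{q}(y-y_0).
\]
For scalar multiplication I would estimate, using axioms (2) and (3),
\[
\mathrm{q}(cx-c_0x_0)\le \mathrm{q}\bigl(c(x-x_0)\bigr)+\mathrm{q}\bigl((c-c_0)x_0\bigr)=|c|^{\alpha}\,\mathrm{q}(x-x_0)+|c-c_0|^{\alpha}\,\mathrm{q}(x_0),
\]
and both terms tend to $0$ as $(c,x)\to(c_0,x_0)$ because $|c|^{\alpha}$ stays bounded near $c_0$. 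Since any metric topology is Hausdorff, $E$ is thereby a Hausdorff topological vector space with $\dim_{\mathbb{C}}E=n$.

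At this stage the conclusion follows from the quoted theorems. By Theorem 1.21 of \cite{RudinBook}, on a Hausdorff topological vector space every linear isomorphism of $\mathbb{C}^n$ onto an $n$-dimensional subspace is a homeomorphism (and such a subspace is closed), while Theorem 1.22 supplies the accompanying local-compactness/dimension bookkeeping. Choosing any linear basis of $E$ produces such an isomorphism $T:\mathbb{C}^n\to E$, which is therefore a homeomorphism, giving $E\cong\mathbb{C}^n$ as asserted.

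I expect the only point genuinely requiring care to be the continuity of scalar multiplication, where the nonstandard homogeneity exponent $\alpha$ in axiom (2) enters; everything else is formal. The conceptual heart of the argument is not any computation but the realization that, once $E$ is exhibited as a finite-dimensional Hausdorff topological vector space, Rudin's uniqueness-of-topology theorem forces the homeomorphism with $\mathbb{C}^n$ \emph{regardless} of the failure of true homogeneity and of any local convexity of $\mathrm{q}$.
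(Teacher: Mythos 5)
Your proposal is correct and follows essentially the same route as the paper: the paper likewise observes that $d(x,y)={\rm q}(x-y)$ makes $E$ a metrizable topological vector space and then cites Theorems 1.21 and 1.22 of Rudin's \emph{Functional Analysis}, leaving the verification implicit. Your write-up merely supplies the details the paper omits (notably the continuity of scalar multiplication, where the exponent $\alpha$ enters), and correctly identifies that Rudin's Theorem 1.21 is the operative statement.
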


\begin{proposition}\label{prop:finitedimen}
Let $(E,{\rm q})$ be a quasi-normed space. Then the following properties are equivalent:
\begin{enumerate}
\item $E$ is finite-dimensional;
\item  Every sequence in the closed ball $B:=\{x\in E: {\rm q}(x)\leq1\}$ has a convergent subsequence;
\item  Every sequence in the unit sphere $S:=\{x\in E: {\rm q}(x)=1\}$ has a convergent subsequence.
\end{enumerate}
\end{proposition}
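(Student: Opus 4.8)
The plan is to prove the chain of implications $(1)\Rightarrow(2)\Rightarrow(3)\Rightarrow(1)$, the first and last being the substantial ones. The single technical fact underpinning everything is a two-sided comparison of $\mathrm q$ with the Euclidean norm on any finite-dimensional quasi-normed space. Concretely, if $\dim E=n<\infty$, Proposition \ref{lm:homeom} furnishes a linear homeomorphism $T\colon\mathbb C^n\to E$; since $\mathrm q=d(\cdot,0)$ is continuous and vanishes only at the origin, the function $w\mapsto\mathrm q(Tw)$ is continuous and strictly positive on the Euclidean unit sphere, hence attains a minimum $m>0$ and a maximum $M<\infty$ there. The degree-$\alpha$ homogeneity of $\mathrm q$ then upgrades this to $m|w|^\alpha\le\mathrm q(Tw)\le M|w|^\alpha$ for all $w\in\mathbb C^n$. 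This estimate does double duty: it shows $T^{-1}(B)$ lies in the Euclidean ball of radius $m^{-1/\alpha}$ and is closed (being the preimage of the closed set $B$ under the continuous $T$), hence compact, so $B=T(T^{-1}(B))$ is compact; and it shows that a $d$-Cauchy sequence in a finite-dimensional subspace pulls back to a Cauchy sequence in $\mathbb C^k$, so every finite-dimensional subspace is complete, hence closed, in $E$.

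With this in hand I would first dispatch $(1)\Rightarrow(2)$: $B$ is compact, and a compact metric space is sequentially compact, which is exactly (2). The implication $(2)\Rightarrow(3)$ is immediate, since $S\subseteq B$.

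The heart of the matter is $(3)\Rightarrow(1)$, which I would prove contrapositively using a quasi-norm version of F.~Riesz's lemma. The claim to establish is: for every proper closed subspace $Y\subset E$ and every $\theta\in(0,1)$ there exists $x\in S$ with $\mathrm q(x-y)\ge\theta$ for all $y\in Y$. To see this, pick $z\in E\setminus Y$; closedness of $Y$ forces $\delta:=\inf_{y\in Y}\mathrm q(z-y)>0$, and I choose $y_0\in Y$ with $\mathrm q(z-y_0)\le\delta/\theta$. Setting $w=z-y_0$ and $x=\mathrm q(w)^{-1/\alpha}w$ normalizes $\mathrm q(x)=1$; for any $y\in Y$, writing $y=\mathrm q(w)^{-1/\alpha}y'$ with $y'\in Y$ and using the homogeneity gives $\mathrm q(x-y)=\mathrm q(w-y')/\mathrm q(w)\ge\delta/\mathrm q(w)\ge\theta$, since $w-y'=z-(y_0+y')$ with $y_0+y'\in Y$. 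Here the degree-$\alpha$ homogeneity is precisely what makes the normalization go through cleanly.

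With the lemma available, I would assume $E$ infinite-dimensional and build a sequence $\{x_k\}\subset S$ inductively: choose $x_1\in S$ arbitrarily, and given $x_1,\dots,x_k$ apply the lemma to the finite-dimensional (hence closed, by the comparison estimate) proper subspace $Y_k=\mathrm{span}(x_1,\dots,x_k)$ with $\theta=1/2$, obtaining $x_{k+1}\in S$ with $\mathrm q(x_{k+1}-x_j)\ge1/2$ for all $j\le k$. The resulting sequence is $1/2$-separated in the metric $d$, so no subsequence is Cauchy and none converges, contradicting (3); hence $E$ must be finite-dimensional. The main obstacle throughout is not conceptual but bookkeeping: every normalization and every application of the triangle inequality must respect the nonstandard homogeneity $\mathrm q(cx)=|c|^\alpha\mathrm q(x)$, and I would take care that the choice of $y_0$ and the separation constant remain compatible with it.
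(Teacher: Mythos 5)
Your proof is correct and complete: the comparison estimate $m|w|^\alpha\le \mathrm{q}(Tw)\le M|w|^\alpha$ is established properly (note $\mathrm{q}$ is $1$-Lipschitz for the metric $d$, hence continuous, so the min/max on the Euclidean sphere exist), the normalization $x=\mathrm{q}(w)^{-1/\alpha}w$ interacts correctly with the degree-$\alpha$ homogeneity in your Riesz lemma, and the $1/2$-separated sequence kills sequential compactness of $S$. It is worth pointing out that the paper does not actually write a proof of Proposition \ref{prop:finitedimen}: it is stated as a consequence of the Rudin results cited just before it (Theorems 1.21 and 1.22 of the reference \cite{RudinBook}). The implicit route there is the same as yours for $(1)\Rightarrow(2)$ (via Proposition \ref{lm:homeom}) and for $(2)\Rightarrow(3)$, but for the converse it would go $(3)\Rightarrow(2)$ by normalizing a sequence in $B$ and using continuity of scalar multiplication, and then $(2)\Rightarrow(1)$ by observing that compactness of $B$ makes $E$ a locally compact topological vector space, so Rudin's Theorem 1.22 forces finite-dimensionality. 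Your $(3)\Rightarrow(1)$ is genuinely different and more self-contained: instead of invoking the nontrivial locally-compact-TVS theorem, you adapt F.~Riesz's separation lemma to quasi-norms, which is exactly the ``extension of the classical finiteness theorem of F.~Riesz'' the authors advertise; it needs only Proposition \ref{lm:homeom} plus elementary metric arguments, at the cost of redoing the Riesz induction by hand. One small caveat: Proposition \ref{lm:homeom} as stated only asserts the existence of a homeomorphism, while your homogeneity argument requires a \emph{linear} one; this is harmless because the Rudin theorem behind it says precisely that every linear isomorphism of $\mathbb{C}^n$ onto an $n$-dimensional (Hausdorff) topological vector space is a homeomorphism, but you should cite it in that form.
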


\section{Proofs of Theorem \ref{th:finite_0} and Theorem \ref{th:B-Metric}} 

 We begin with two elementary lemmas.

\begin{lemma}\label{lm:ddbar}
For every $C^2$ function $\varphi$ on ${\mathbb C}^N$ we have
$$
i\partial\bar{\partial} (\varphi|_M) = (i\partial\bar{\partial} \varphi)|_M.
$$
\end{lemma}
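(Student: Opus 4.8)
The plan is to recognize the identity as an instance of the functoriality of the Dolbeault operators under holomorphic maps. Write $\iota\colon M\hookrightarrow \mathbb C^N$ for the inclusion, which is a holomorphic embedding once $M$ carries its induced complex structure; by definition $\varphi|_M=\iota^*\varphi$ and $(i\partial\bar\partial\varphi)|_M = \iota^*(i\partial\bar\partial\varphi)$, while $i\partial\bar\partial(\varphi|_M)$ is computed intrinsically on $M$. Thus the lemma is exactly the assertion that $\iota^*\,\partial\bar\partial = \partial\bar\partial\,\iota^*$ on functions.

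First I would invoke the standard fact that pullback commutes with the exterior derivative for any smooth map, so that $\iota^*\,d = d\,\iota^*$ on all forms. The decisive extra input is holomorphicity: since the differential of a holomorphic map is $\mathbb C$-linear on tangent spaces, $\iota^*$ preserves bidegree, carrying $(p,q)$-forms to $(p,q)$-forms. Decomposing $d=\partial+\bar\partial$ and comparing types in the identity $\iota^*\,d = d\,\iota^*$ then forces the two pure-type pieces to match separately, i.e. $\iota^*\,\partial = \partial\,\iota^*$ and $\iota^*\,\bar\partial = \bar\partial\,\iota^*$.

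With these two commutation relations in hand, I would simply compute
$$
\partial\bar\partial(\varphi|_M)=\partial\bar\partial\,\iota^*\varphi = \partial\,\iota^*\bar\partial\varphi = \iota^*\,\partial\bar\partial\varphi = (\partial\bar\partial\varphi)|_M,
$$
and multiply by $i$ to conclude.

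The only point deserving care — and where a short local verification is worthwhile — is the bidegree-preservation step. Concretely, I would choose local holomorphic coordinates $w_1,\dots,w_n$ on $M$, express each restriction $z_j|_M$ as a holomorphic function of the $w_k$, and check by the chain rule that $dz_j|_M = \sum_k \frac{\partial (z_j|_M)}{\partial w_k}\,dw_k$ is of type $(1,0)$ while $d\bar z_j|_M$ is of type $(0,1)$. This makes transparent that restricting a $(p,q)$-form cannot produce mixed components, which is precisely the feature distinguishing the holomorphic case from a general smooth map, where only $d$ — and not $\partial$ and $\bar\partial$ separately — would be preserved.
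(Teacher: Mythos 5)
Your proof is correct, but it follows a genuinely different route from the paper's. The paper argues in adapted holomorphic coordinates: near a point of $M$ it chooses coordinates $(\zeta_1,\cdots,\zeta_N)$ of $\mathbb{C}^N$ in which $M$ is the flat slice $\{\zeta_{n+1}=\cdots=\zeta_N=0\}$, and then both $i\partial\bar{\partial}(\varphi|_M)$ and $(i\partial\bar{\partial}\varphi)|_M$ are visibly the same truncated sum $i\sum_{j,k=1}^n \frac{\partial^2\varphi}{\partial\zeta_j\partial\bar{\zeta}_k}\,d\zeta_j\wedge d\bar{\zeta}_k$ evaluated on the slice. You instead invoke functoriality: $\iota^*d=d\,\iota^*$ together with bidegree preservation of pullback under holomorphic maps, from which $\iota^*\partial=\partial\,\iota^*$ and $\iota^*\bar{\partial}=\bar{\partial}\,\iota^*$ follow by comparing pure types. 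One small point worth making explicit: your displayed chain of equalities uses the type-comparison argument twice, once on the function $\varphi$ and once on the $(0,1)$-form $\bar{\partial}\varphi$ (where $\bar{\partial}^2\varphi=0$ is needed to identify $\iota^*d\bar{\partial}\varphi$ with $\iota^*\partial\bar{\partial}\varphi$); your local verification of type preservation for $dz_j|_M$ and $d\bar{z}_j|_M$ covers exactly what is needed for this. Your approach buys generality -- it proves the commutation for an arbitrary holomorphic map rather than a submanifold inclusion, and it does not require the existence of flattening coordinates -- while the paper's computation is more concrete and immediate once such coordinates are chosen. Both arguments involve only two derivatives of $\varphi$, so the $C^2$ hypothesis suffices in either case.
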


\begin{proof}
On every sufficiently small domain $U$ in ${\mathbb C}^N$ with $U\cap M\neq \emptyset$ we choose local coordinates $(\zeta_1,\cdots,\zeta_n,\cdots,\zeta_N)$
such that
$$
M\cap U=\{(\zeta_1,\cdots,\zeta_n,0,\cdots,0)\}.
$$
Then we have $\varphi|_M=\varphi(\zeta_1,\cdots,\zeta_n,0,\cdots,0)$ so that
$$
i\partial\bar{\partial} (\varphi|_M) = i \sum_{j,k=1}^n \frac{\partial^2 \varphi}{\partial \zeta_j\partial\bar{\zeta}_k}(\zeta_1,\cdots,\zeta_n,0,\cdots,0)d\zeta_j\wedge d\bar{\zeta}_k.
$$
On the other hand, since
$$
i\partial\bar{\partial} \varphi= i \sum_{j,k=1}^N \frac{\partial^2 \varphi}{\partial \zeta_j\partial\bar{\zeta}_k}d\zeta_j\wedge d\bar{\zeta}_k,
$$
we have
\[(i\partial\bar{\partial} \varphi)|_M = i \sum_{j,k=1}^n \frac{\partial^2 \varphi}{\partial \zeta_j\partial\bar{\zeta}_k}(\zeta_1,\cdots,\zeta_n,0,\cdots,0)d\zeta_j\wedge d\bar{\zeta}_k.\qedhere\]
\end{proof}

\begin{lemma}[Hardy inequality]\label{lm:Hardy}
Suppose $n>1$. For every  real-valued $C^1$ function $\phi$ with compact support in $M$, we have
\begin{equation}\label{eq:Hardy}
(n-1)^2 \int_M {\phi^2}/{\rho^2} dV \le \int_M |\nabla \phi|^2 dV.
\end{equation}
\end{lemma}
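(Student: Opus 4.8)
The plan is to prove (\ref{eq:Hardy}) by the classical ground-state (completing-the-square) argument adapted to $M$, for which the only geometric inputs are two facts about the restricted radial function $\rho$: that $\Delta\rho^2=4n$, and that $|\nabla\rho|\le 1$ pointwise on $M$. The constant $(n-1)^2$ is then exactly the sharp Hardy constant of the underlying real dimension $2n$, and the optimization that produces it is where the hypothesis $n>1$ is used.

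First I would record the two inputs. Since $M$ is a complex submanifold of $\C^N$ it is minimal, so the restrictions to $M$ of the Euclidean coordinate functions are harmonic; writing $\rho^2=\sum_a x_a^2$ in the real coordinates of $\C^N=\R^{2N}$ and using $\Delta_M x_a=0$ gives
$$
\Delta\rho^2=2\sum_a|\nabla x_a|^2=2\,\mathrm{tr}(\mathrm{proj}_{TM})=2\dim_\R M=4n .
$$
Equivalently, by Lemma~\ref{lm:ddbar}, $i\partial\bar\partial\rho^2$ is the restriction of $i\partial\bar\partial|z|^2$, i.e.\ the K\"ahler form $\omega$ of $g$, so $\mathrm{Tr}_g(i\partial\bar\partial\rho^2)=\mathrm{Tr}_g\omega=n$ and hence $\Delta\rho^2=4n$. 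For the gradient bound, $\nabla\rho$ is the orthogonal projection onto $T_pM$ of the ambient gradient of $|z|$, which has unit length, so $|\nabla\rho|\le 1$. From $\Delta\rho^2=4n$ and $\nabla\rho^2=2\rho\nabla\rho$ a direct computation yields the pointwise identity
$$
\Delta\log\rho=\mathrm{div}\!\left(\frac{\nabla\rho}{\rho}\right)=\frac{2n-2|\nabla\rho|^2}{\rho^2}\ge\frac{2(n-1)}{\rho^2}.
$$

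Next I would complete the square. For a parameter $\beta>0$ and $\phi\in C^1_0(M)$, expanding the nonnegative quantity $\int_M|\nabla\phi+\beta\phi\,\nabla\rho/\rho|^2\,dV$ and integrating the cross term by parts via (\ref{eq:GreenFormula}) (legitimate since $\rho>0$ is bounded below on $\mathrm{supp}\,\phi$) gives
$$
0\le\int_M|\nabla\phi|^2\,dV-\beta\int_M\phi^2\,\Delta\log\rho\,dV+\beta^2\int_M\phi^2\frac{|\nabla\rho|^2}{\rho^2}\,dV .
$$
Substituting the identity for $\Delta\log\rho$ and collecting the $|\nabla\rho|^2$ terms, the resulting lower bound for $\int_M|\nabla\phi|^2$ has integrand $\phi^2\rho^{-2}[\,2\beta n-(2\beta+\beta^2)|\nabla\rho|^2\,]$; since $2\beta+\beta^2>0$ and $|\nabla\rho|^2\le 1$, this is at least $\beta(2n-2-\beta)\,\phi^2/\rho^2$. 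Optimizing the coefficient $\beta(2n-2-\beta)$ gives its maximum $(n-1)^2$ at $\beta=n-1$, a choice that is admissible precisely when $n>1$, yielding (\ref{eq:Hardy}).

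I do not expect a serious obstacle, but the delicate point is the bookkeeping of the $|\nabla\rho|^2$ terms: the divergence of $\nabla\rho/\rho$ contributes a favorable $-2|\nabla\rho|^2$ while the square term contributes an unfavorable $+\beta^2|\nabla\rho|^2$, and one must verify that their net coefficient $2\beta+\beta^2$ is nonnegative so that $|\nabla\rho|^2\le 1$ is applied in the correct direction. The degeneration of the constant to $0$ as $n\to 1$ is exactly why the case $n=1$ must instead be handled by the separate argument behind Theorem~\ref{th:finite}.
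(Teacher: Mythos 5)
Your proof is correct and is essentially the paper's own argument: both rest on exactly the two inputs $\Delta\rho^2=4n$ (the paper's (\ref{eq:Basic}), which you derive equivalently via minimality) and $|\nabla\rho|\le 1$, followed by integration by parts against a radial ground state plus a Cauchy--Schwarz-type quadratic inequality. Your completing-the-square with drift $\beta\,\nabla\log\rho$ and the optimal choice $\beta=n-1$ is a reparametrization of the paper's route, which applies (\ref{eq:Laplace-}) with $\eta(t)=t$, $\gamma=1/2$ to $\psi=-\rho^{2-2n}$, i.e.\ to the square of the same implicit ground state $\rho^{-(n-1)}$.
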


\begin{proof}
 We write
 $
 g= \sum_{j,k=1}^n g_{j\bar{k}}d\zeta_j d\bar{\zeta}_k
 $
 in local coordinates of $M$. Let $(g^{\bar{j}{k}})$ be the inverse matrix of $(g_{j\bar{k}})$, that is
 $$
 \sum_{k=1}^n g_{j\bar{k}}g^{\bar{k}l}=\delta_{jl}=\sum_{k=1}^n g^{\bar{j}k} g_{k\bar{l}}
 $$
 where $\delta_{jl}$ is the Kronecker delta.
  The complex Laplacian  is given by
   $$
   \Box=\sum_{j,k=1}^n g^{\bar{k}{j}} \partial^2/\partial \zeta_j\partial\bar{\zeta}_k.
      $$
     Clearly we have $\Delta=4\Box$. By Lemma \ref{lm:ddbar} we have
      $$
     g_{j\bar{k}} = \partial^2 \rho^2/{\partial \zeta_j\partial \bar{\zeta_k}},
      $$
      so that
 \begin{equation}\label{eq:Basic}
\Box \rho^2= \sum_{j,k=1}^n g_{j\bar{k}}g^{\bar{k}j} = \sum_{j=1}^n 1  = n.
\end{equation}
Set $\psi=-\rho^{-2n+2}$. A direct calculation shows
$$
\nabla \psi = 2(n-1) \rho^{-2n+1}\nabla \rho
$$
and
\begin{eqnarray*}
\Delta \psi  & = & (n-1)\left[\rho^{-2n}\Delta \rho^2-n \rho^{-2(n+1)} |\nabla \rho^2|^2\right]\\
& = & 4n(n-1)\rho^{-2n}(1-|\nabla \rho|^2).
\end{eqnarray*}
Applying $(\ref{eq:Laplace-})$ with $\eta(t)=t$ and $\gamma=1/2$, we have
\begin{eqnarray*}
\int_M |\nabla \phi|^2 dV & \ge & \frac14 \int_M \phi^2 \left[\frac{2\Delta \psi}{-\psi}+\frac{|\nabla \psi|^2}{\psi^2}\right] dV\\
& = & \frac14
                    \int_M \frac{\phi^2}{\rho^2} \left[8n(n-1)(1-|\nabla \rho|^2)+4(n-1)^2|\nabla \rho|^2\right] dV\\
  & = & (n-1) \int_M \frac{\phi^2}{\rho^2} \left[2n-(n+1)|\nabla \rho|^2\right] dV\\
  &  \ge & (n-1)^2 \int_M \frac{\phi^2}{\rho^2} dV
\end{eqnarray*}
 for $|\nabla \rho|\le 1$.
\end{proof}

\begin{proof}[Proof of Theorem \ref{th:finite_0}]
By $(\ref{eq:Basic})$ we have
 $$
 \Delta \log \rho^2=\frac{\Delta \rho^2}{\rho^2}-\frac{|\nabla \rho^2|^2}{\rho^4}=\frac{4\Box \rho^2}{\rho^2}-\frac{4|\nabla \rho|^2}{\rho^2}\ge \frac{4(n-1)}{\rho^2}.
 $$
 Let $s$ be a  holomorphic section of $L$ over $M$. Set $\varphi= \log \rho^2$. For every $p>0$ we have
      \begin{eqnarray*}
      \Delta\left[\frac{p}4\log |s|^2_{h}+\frac{\varphi}2\right] & = & p\,\Box \log |s|^2_{h}+\frac{\Delta \varphi}2\\
      & \ge & -p {\rm Tr}_g(\Theta_h)  + \frac{2(n-1)}{\rho^2}\\
      & \ge & \frac{2 (n-1)-\alpha p}{\rho^2}
      \end{eqnarray*}
whenever  $\rho\ge b\gg 1$, and
            $$
            \Delta\left[\frac{p}4\log |s|^2_{h}+\frac{\varphi}2\right]\ge -{\rm const}/\rho^2
                        $$
                        on whole $M$ (where the constant might depend on $\alpha,p$ but is independent of $s$).
             Set
             $$
             \psi:=\exp\left[\frac{p}4\log |s|^2_{h}+\frac{\varphi}2\right].
             $$
              Then we have
            \begin{equation}\label{eq:Delta_1}
            \Delta \psi \ge  [2 (n-1)-\alpha p]\cdot \psi/\rho^2 + |\nabla \psi|^2/\psi
            \end{equation}
              on $\{\rho>b\}\cap \{\psi>0\}$, and
              \begin{equation}\label{eq:whole}
              \Delta \psi \ge -{\rm const}\cdot \psi/\rho^2
              \end{equation}
              on whole $M$. Notice also that the set $\{\psi=0\}$ is of  measure $0$ and $\psi$ is smooth outside this set. 

Let $\sigma$ be a smooth convex nondecreasing function on ${\mathbb R}$ such that $\sigma|_{(-\infty,1/4]}=1/2$, $\sigma (t)=t$  for $t\ge 1$ and $\sigma'\leq1$. For $\delta>0$ we set $\psi_\delta:=\delta\sigma(\psi/\delta)$.
Then $\psi_\delta$ is a nonnegative smooth psh function on $M$. Moreover, we have
  $$
  \nabla \psi_\delta= \sigma'(\psi/\delta) \nabla\psi,
  $$
and  $\nabla\psi_\delta=\nabla\psi$ when $\psi>\delta$.
Since $\sigma'\leq1$, it follows from \eqref{eq:whole} that
\begin{equation}\label{eq:DeltaPsidelta}
\Delta\psi_\delta=\sigma'(\psi/\delta)\Delta\psi+\sigma''(\psi/\delta)|\nabla\psi|^2/\delta\geq-\mathrm{const}\cdot\psi/\rho^2.
\end{equation}
By \eqref{eq:CacciopoliIneq} and \eqref{eq:DeltaPsidelta}, we have
\begin{eqnarray*}
&& \int_{\{\psi>\delta\}}\phi^2|\nabla\psi|^2dV \leq \int_M\phi^2|\nabla\psi_\delta|^2dV\\
& \leq & \frac{1}{\gamma(1-\gamma)}\int_M\psi^2_\delta|\nabla\phi|^2dV-\frac{1}{1-\gamma}\int_M\phi^2\psi_\delta\Delta\psi_\delta{dV}\\
& \leq & \frac{1}{\gamma(1-\gamma)}\int_M\psi^2_\delta|\nabla\phi|^2dV+\frac{\mathrm{const}}{1-\gamma}\int_M\phi^2\psi_\delta\psi/\rho^2{dV}
\end{eqnarray*}
for all  $\phi\in C^1_0(M)$.  Letting $\delta\rightarrow0$, we conclude that $\nabla\psi\in{L}^2_{\mathrm{loc}}$.

   Let $\chi:{\mathbb R}\rightarrow [0,1]$ be a cut-off function such that $\chi|_{(-\infty,1]}=1$,  $\chi|_{[2,\infty)}=0$ and $|\chi'|\le 2$. We set
  $$
  \phi=[1-\chi(\rho/b)]\chi(\rho/R),\ \ \ R\gg b.
  $$
 and we have
\begin{eqnarray}\label{eq:KeyEstimate}
(n-1)^2\int_{2b<\rho<R} {\psi_\delta^2}/{\rho^2} dV
& \le &  (n-1)^2\int_{M} {(\phi\psi_\delta)^2}/{\rho^2} dV\nonumber\\
 & \le & \int_M |\nabla (\phi \psi_\delta)|^2 dV\nonumber\ \ \ \ \ \ \ \ ({\rm by\ }(\ref{eq:Hardy}))\\
 & \le & \int_{2b<\rho < R} |\nabla \psi_\delta|^2dV
            +2\int_{M\backslash\{2b<\rho < R\}} \phi^2|\nabla \psi_\delta|^2dV\nonumber\\
 &&      +2\int_{M\backslash\{2b<\rho < R\}} \psi_\delta^2|\nabla \phi |^2dV\nonumber\\
 & \le &  \int_{2b<\rho < R} |\nabla \psi_\delta|^2dV +2\int_{b<\rho<2b} |\nabla\psi_\delta|^2   dV\\
  & &  + 2 \int_{R<\rho<2R} |\nabla\psi_\delta|^2   dV +2\int_{M} \psi_\delta^2|\nabla \phi |^2dV\nonumber.
\end{eqnarray}
Fix $0<\gamma< 1$ for a moment. By $(\ref{eq:CacciopoliIneq})$ we have
  \begin{eqnarray}\label{eq:GradientEstimate}
  && \int_{2b<\rho<R} |\nabla\psi_\delta|^2   dV \le  \int_M \phi^2|\nabla\psi_\delta|^2   dV\nonumber\\
 &  \le & \frac1{\gamma(1-\gamma)} \int_M \psi_\delta^2 |\nabla \phi|^2 dV - \frac1{1-\gamma} \int_M \phi^2 \psi_\delta \Delta \psi_\delta dV\nonumber\\
  &  \le & \frac1{\gamma(1-\gamma)} \cdot \frac{{2}}{b^2} \int_{b<\rho<2b} \psi_\delta^2  dV+\frac1{\gamma(1-\gamma)} \cdot\frac{{2}}{R^2} \int_{R<\rho<2R} \psi_\delta^2  dV\\
  &  & - \frac1{1-\gamma} \int_M \phi^2 \psi_\delta \Delta \psi_\delta dV\nonumber.
  \end{eqnarray}
Let $\kappa:{\mathbb R}\rightarrow [0,1]$ be a cut-off function such that $\kappa=1$ on $[1,2]$ and $\kappa=0$ on $(-\infty,1/2]\cup [3,\infty)$. Analogously we have
 \begin{eqnarray}\label{eq:GradientEstimate_2}
 \int_{b<\rho<2b} |\nabla\psi_\delta|^2   dV
 &  \le & \frac1{\gamma(1-\gamma)} \int_M \psi_\delta^2 |\nabla \kappa(\rho/b)|^2 dV\nonumber  \\
 & &- \frac1{1-\gamma} \int_M \kappa(\rho/b)^2 \psi_\delta \Delta \psi_\delta dV\nonumber\\
 & \le & \frac{C_\gamma}{b^2} \int_{b/2<\rho<3b}   \psi_\delta^2 dV \\
 & &- \frac1{1-\gamma} \int_M \kappa(\rho/b)^2 \psi_\delta \Delta \psi_\delta dV\nonumber,
  \end{eqnarray}
  and
   \begin{eqnarray}\label{eq:eq:GradientEstimate_3}
 \int_{R<\rho<2R} |\nabla\psi_\delta|^2   dV & \le & \frac{C_\gamma}{R^2} \int_{R/2<\rho<3R}   \psi_\delta^2 dV\\
 &&  - \frac1{1-\gamma} \int_M \kappa(\rho/R)^2 \psi_\delta \Delta \psi_\delta dV\nonumber
  \end{eqnarray}
  where $C_\gamma$ denotes a generic constant depending only on $\gamma$.

 Notice that if $\delta/4\le \psi\le \delta$ then $\delta/2\le \psi_\delta\le \delta$ (e.g., $\psi_\delta\asymp \psi$). It follows from \eqref{eq:DeltaPsidelta} that

  \begin{equation}\label{eq:Delta_3}
  \int_M \kappa(\rho/b)^2 \psi_\delta \Delta \psi_\delta dV \ge -{\rm const}\cdot \int_{b/2<\rho<3b}\psi^2/\rho^2 dV
  \end{equation}
        and
  \begin{equation}\label{eq:Delta_4}
  \int_M \kappa(\rho/R)^2 \psi_\delta \Delta \psi_\delta dV \ge -{\rm const}\cdot \int_{R/2<\rho<3R} \psi^2/\rho^2 dV.
        \end{equation}
 We also have
  \begin{align*}
  \int_M \phi^2 \psi_\delta \Delta \psi_\delta dV  = & \int_{\psi>\delta}  \phi^2 \psi_\delta \Delta \psi_\delta dV + \int_{\delta/4\le \psi\le \delta}  \phi^2 \psi_\delta \Delta \psi_\delta dV\\
    \ge & -|2 (n-1)-\alpha p| \int_{\{b<\rho<2R\}\cap \{\psi>\delta\}} \psi^2/\rho^2 dV\\
    & + \int_{\{2b<\rho<R\}\cap \{\psi>\delta\}} |\nabla \psi|^2 dV\ \ \ \ \ \ \ \  ({\rm by\ \ }(\ref{eq:Delta_1}))\\
    & -{\rm const} \cdot \int_{\{\delta/4\le \psi\le \delta\}\cap\, {\rm supp\,}\phi}\psi^2/\rho^2 dV\\
    \ge & -|2 (n-1)-\alpha p| \int_{\{b<\rho<2R\}\cap \{\psi>\delta\}} \psi^2/\rho^2 dV\\
    & + \int_{2b<\rho<R} |\nabla \psi_\delta|^2 dV-\int_{\{2b<\rho<R\}\cap \{\psi\le \delta\}}|\nabla\psi_\delta|^2 dV\\
    & -{\rm const} \cdot \int_{\{\delta/4\le \psi\le \delta\}\cap\, {\rm supp\,}\phi}\psi^2/\rho^2 dV.
  \end{align*}
           This combined with $(\ref{eq:GradientEstimate})$ yields
   \begin{eqnarray}\label{eq:GradientEstimate_4}
 \int_{2b<\rho<R} |\nabla\psi_\delta|^2   dV  & \le &  \frac{C_\gamma}{b^2} \int_{b<\rho<2b} \psi_\delta^2  dV +\frac{C_\gamma}{R^2} \int_{R<\rho<2R} \psi_\delta^2  dV \\
   & & + \frac{|2 (n-1)-\alpha p|}{2-2\gamma}\int_{\{b<\rho<2R\}\cap \{\psi>\delta\}} \psi^2/\rho^2 dV\nonumber\\
    & & + C_\gamma \int_{\{2b<\rho<R\}\cap \{\psi\le \delta\}}|\nabla\psi|^2 dV\nonumber\\
    & & + C_\gamma \int_{\{\delta/4\le \psi\le \delta\}\cap\, {\rm supp\,}\phi}\psi^2/\rho^2 dV\nonumber.
    \end{eqnarray}
    Applying $(\ref{eq:KeyEstimate})$, $(\ref{eq:GradientEstimate_2})\sim (\ref{eq:GradientEstimate_4})$ by first letting $\delta\rightarrow 0$ then letting $R\rightarrow \infty$, we obtain
 \begin{eqnarray*}
&& C_\gamma  \int_{b/2<\rho<3b} \psi^2/\rho^2  dV\\
& \ge & (n-1)^2\int_{\rho>2b} {\psi^2}/{\rho^2} dV
    -\frac{|2 (n-1)-\alpha p|}{2-2\gamma} \int_{\rho>b} \psi^2/\rho^2 dV
 \end{eqnarray*}
whenever
     $$
      \int_M {\psi^2}/{\rho^2} dV= \int_M |s|^p_{h}  dV<\infty
                $$
                (recall that $|\nabla \psi|\in L^2_{\rm loc}$).

                If $p<\frac{2n(n-1)}{\alpha}$ and $\gamma\ll1 $ then we have
                $$
                \varepsilon:= (n-1)^2-\frac{|2 (n-1)-\alpha p|}{2-2\gamma}>0.
                                $$
                                      Thus
  $$
 \varepsilon\cdot \int_M {\psi^2}/{\rho^2} dV \le {\rm const}\cdot\int_{{\rho<3 b}} {\psi^2}/{\rho^2} dV,
    $$
    i.e.,
    \begin{equation}\label{eq:KeyIneq_0}
    \varepsilon\cdot\int_M |s|^p_{h}   dV \le{\rm const}\cdot \int_{\rho<3b} |s|^p_{h} dV.
        \end{equation}
We define a quasi-norm ${\rm q}$ on $H^0_{L^p}(M,L)$  by
        $$
        {\rm q}(s)=\left\{
        \begin{array}{ll}
        \left[\int_M |s|^p_{h}  dV\right]^{1/p}  & p\ge 1\\
        \int_M |s|^p_{h}  dV & 0<p<1
        \end{array}
                      \right.
        $$
        (notice that ${\rm q}$ is even a norm for $p\ge 1$). By Proposition \ref{prop:finitedimen}, it suffices to verify that every sequence  $\{s_j\}\subset H^0_{L^p}(M,L)$  with ${\rm q}(s_j)\le1$ contains a convergent subsequence.
          Let $\{U_m\}^\infty_{m=1}$ be a locally finite covering of $M$ such that $L$ is  trivial over $U_m$   for all $m$. Let  $\xi_m$ be a frame of $L$ on $U_m$. We may write $s_j=s^*_{m,j}\xi_m$ for some holomorphic function $s^*_{m,j}$ on $U_m$, and $|s_j|_h=|s^*_{m,j}||\xi_m|_h$. Since $\mathrm{q}(s_j)\leq1$ and $|\xi_m|_h$ is a nonzero smooth function, it follows from the mean value inequality that $\{s^*_{m,j}\}^\infty_{j=1}$ is a sequence of locally uniformly bounded holomorphic functions on $U_m$. Montel's theorem combined with a diagonal process yields that  there exists a subsequence $\{j_k\}$ which is independent of $m$, such that $\{s^*_{m,j_k}\}^\infty_{k=1}$ converging locally uniformly to a holomorphic function $s^*_m$ on $U_m$. Therefore, the subsequence $\{s_{j_k}\}^\infty_{k=1}$ converges locally uniformly (with respect to the metric $h$) to some holomorphic section $s$ of $L$ with $s|_{U_m}=s^*_m\xi_m$. By the Fatou lemma we have
        $$
        \int_{M} |s|^p_{h}  dV
        \le \liminf_{k\rightarrow \infty}\int_{M} |s_{j_k}|^p_{h}  dV \le1,
                       $$
                       i.e., ${\rm q}(s)\le 1$. On the other hand,  $(\ref{eq:KeyIneq_0})$ implies that
                       $$
                         {\rm q}(s_{j_k}-s) \rightarrow 0
                                             $$
                                             as $k\rightarrow \infty$. Thus we have verified that $H^0_{L^p}(M,L)$ is finite-dimensional.

Finally, if\/ $ {\rm Tr}_g(\Theta_h) \le  \alpha/\rho^2$ holds on whole $M$, then we may take $b=0$ in (\ref{eq:KeyIneq_0}) so that $s=0$ for any $s\in H^0_{L^p}(M,L)$.
  \end{proof}

\begin{proof}[Proof of Theorem \ref{th:B-Metric}]
Let $F:M\rightarrow \Omega$ be a  biholomorphic mapping where $\Omega$ is a bounded pseudoconvex domain with $C^2$ boundary in $\mathbb C^n$.  Let $\mathcal K_\Omega(z)$ be the standard Bergman kernel on $\Omega$. We consider the following weighted Bergman space
$$
A^2_\alpha(\Omega)=\left\{f\in \mathcal O(\Omega): \int_\Omega |f|^2 \mathcal K_\Omega^\alpha<\infty \right\},\ \ \ \alpha > 0.
$$
It is easy to verify that
$$
\mathcal K_\Omega(z) \lesssim \delta_\Omega(z)^{-n-1}
$$
where $\delta_\Omega$ is the Euclidean boundary distance of $\Omega$. Since $\delta_\Omega^{-r}$ is integrable on $\Omega$ for each $r<1$, it follows from the previous inequality that  $A^2_\alpha(\Omega)$ contains all complex polynomials for each $\alpha<\frac1{n+1}$, which is of infinite-dimensional.
Suppose on the contrary that
$$
\liminf_{\rho\rightarrow \infty} \rho^2\left[ (1-\varepsilon){\rm Tr}_g(\omega_B) + (n+1) {\rm Scal}_g \right]  > -(n-1)n(n+1).
$$
for some $\varepsilon>0$. Then there exist $\alpha<\frac1{n+1}$ and $\beta<1$ such that 
$$
-\alpha {\rm Tr}_g(\omega_B)- {\rm Scal}_g \le_{\rm as} \frac{\beta n(n-1)}{\rho^2}.
$$
 Let $K_M$ be the canonical bundle of $M$ and let $h$ be the Hermitian metric on $K_M$ which is given by $(dV_g)^{-1} \cdot F^\ast (\mathcal K_\Omega^\alpha)$. Then we have $H^0_{L^2}(M,K_M)\simeq A^2_\alpha(\Omega)$. Note that 
$$
\Theta_h = -{\rm Ric}_g -\alpha i\partial \bar{\partial} \log F^\ast (\mathcal K_\Omega)=-{\rm Ric}_g -\alpha \omega_B
$$
since the Bergman metric is invariant under biholomorphic mappings (we refer to \cite{Kobayashi} for the definition of the Bergman metric on complex manifolds). Thus
$$
{\rm Tr}_g(\Theta_h) =- {\rm Scal}_g -\alpha {\rm Tr}_g(\omega_B) \le_{\rm as} \frac{\beta n(n-1)}{\rho^2}.
$$
In view of Theorem \ref{th:finite_0}, we conclude that $H^0_{L^2}(M,K_M)$
 is of finite-dimensional, so is $A^2_\alpha(\Omega)$, a contradiction.
\end{proof}
\section{Proof of Theorem \ref{th:finite}}

Let $\lambda:(0,\infty)\rightarrow (0,\infty)$ be a continuous increasing function such that
$
\int_1^\infty dt/\lambda(t)<\infty.
$
Fix $\delta>0$.
Let $a=a(\delta)>1$ satisfy
\begin{equation}\label{eq:Integ_1}
\int_{a^2}^\infty \frac{dt}{\lambda(t)}\le \delta.
\end{equation}
Let
$b=b(\delta)$ satisfy
\begin{equation}\label{eq:Integ_2'}
b\ge 2(a+1),
\end{equation}
\begin{equation}\label{eq:Integ_2}
2 \int_{b^2/2}^\infty \frac{dt}{\lambda(t)}\le \int_{(a+1)^2}^{2(a+1)^2} \frac{dt}{\lambda(t)}.
\end{equation}
We first show the following

\begin{lemma}\label{lm:Construction}
There exists a nonnegative $C^2$ psh function $\varphi$ on ${\mathbb C}^N$ such that for $|z|\ge b$:
\begin{enumerate}
\item $\varphi(z)\le \delta \log |z|^2$;
\item $i\partial\bar{\partial}\varphi\ge [1/\lambda(|z|^2)] {i\partial\bar{\partial}|z|^2}$.
\end{enumerate}
\end{lemma}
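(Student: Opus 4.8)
The plan is to look for $\varphi$ as a radial function of the ambient variable, $\varphi(z)=f(|z|^2)$ with $f\in C^2([0,\infty))$ and $f\geq0$, which turns both requirements into ordinary differential inequalities for $f$. Writing $t=|z|^2$, one computes $\partial^2\varphi/\partial z_j\partial\bar z_k=f'(t)\delta_{jk}+f''(t)\,\bar z_j z_k$, so the Levi form of $\varphi$ on a vector $w\in\mathbb C^N$ equals $f'(t)|w|^2+f''(t)|\langle w,z\rangle|^2$. Hence $i\partial\bar\partial\varphi$ has, relative to the Euclidean form, a tangential eigenvalue $f'(t)$ (multiplicity $N-1$) and a radial eigenvalue $f'(t)+t f''(t)=(t f')'(t)$. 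Thus $\varphi$ is psh on all of $\mathbb C^N$ as soon as $f'\geq0$ and $(tf')'\geq0$, while condition (2), $i\partial\bar\partial\varphi\geq[1/\lambda(t)]\,i\partial\bar\partial|z|^2$, is \emph{equivalent} to the two scalar inequalities $f'(t)\geq1/\lambda(t)$ and $(tf')'(t)\geq1/\lambda(t)$ for $t\geq b^2$. It is convenient to set $w(t):=t f'(t)$, so that the tangential condition becomes $w(t)\geq t/\lambda(t)$, the radial condition becomes $w'(t)\geq1/\lambda(t)$, and $f(t)=\int_0^t w(s)\,ds/s$.

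The analytic heart of the matter is that $\int_1^\infty dt/\lambda<\infty$ together with the monotonicity of $\lambda$ makes the competing demands on $w$ compatible, and the constants $a,b$ from \eqref{eq:Integ_1}--\eqref{eq:Integ_2} are calibrated precisely for this. Since $\lambda$ is increasing, $t/\lambda(t)\leq2\int_{t/2}^t ds/\lambda(s)$, and combining this with the tail I would record the master estimate: for all $t\geq b^2$,
\begin{align*}
\frac{t}{\lambda(t)}+\int_t^\infty\frac{ds}{\lambda(s)}
&\leq 2\int_{t/2}^\infty\frac{ds}{\lambda(s)}
\leq 2\int_{b^2/2}^\infty\frac{ds}{\lambda(s)}\\
&\leq \int_{(a+1)^2}^{2(a+1)^2}\frac{ds}{\lambda(s)}
\leq \int_{a^2}^\infty\frac{ds}{\lambda(s)}\leq\delta,
\end{align*}
where the middle step is exactly \eqref{eq:Integ_2} and the last two are \eqref{eq:Integ_1}. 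In particular $t/\lambda(t)\leq\delta$ for $t\geq b^2$, which is what reconciles the tangential requirement ($w\geq t/\lambda$) with the growth requirement ($w\leq\delta$).

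With this in hand the construction of $w$ is clean. On $[b^2,\infty)$ set $w(t)=b^2/\lambda(b^2)+\int_{b^2}^t ds/\lambda(s)$; then $w'=1/\lambda$ gives the radial inequality, the function $w(t)-t/\lambda(t)$ vanishes at $t=b^2$ and has nonnegative derivative $t\lambda'/\lambda^2$, so $w\geq t/\lambda$ (the tangential inequality), and the master estimate yields $w(t)\leq b^2/\lambda(b^2)+\int_{b^2}^\infty ds/\lambda\leq\delta$. I would then extend $w$ to $[0,b^2]$ by taking $w\equiv0$ on $[0,t_1]$ for some $t_1\geq1$ (making $\varphi$ constant, hence smooth and psh, near the origin) and interpolating $C^1$-smoothly and monotonically up to the value $w(b^2)=b^2/\lambda(b^2)\leq\delta$, keeping $0\leq w\leq\delta$ and matching $w'(b^2)=1/\lambda(b^2)$. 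Setting $f(t)=\int_0^t w(s)\,ds/s$ gives $\varphi\geq0$, $\varphi\in C^2$ (from $w\in C^1$), and $\varphi$ globally psh (from $w\geq0$, $w'\geq0$); moreover $f(t)=\int_{t_1}^t w(s)\,ds/s\leq\delta\int_{t_1}^t ds/s=\delta\log(t/t_1)\leq\delta\log t$ for $t\geq b^2$, which is (1), while the eigenvalue identities give (2).

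The main obstacle is the radial direction, and it is exactly why the hypothesis is phrased through $\lambda$ with $\int^\infty dt/\lambda<\infty$. The naive curvature potential $\int^t ds/\lambda(s)$ has tangential eigenvalue $1/\lambda$ but radial eigenvalue $(t/\lambda)'=1/\lambda-t\lambda'/\lambda^2$, which is negative once $\lambda$ grows faster than linearly, so it violates (2). Passing to $w=tf'$ converts ``radial convexity'' into the transparent monotonicity condition $w'\geq1/\lambda$; the remaining difficulty is that $w$ must simultaneously lie below $\delta$ (for the growth bound (1)) and above $t/\lambda$ (for the tangential half of (2)), and the only mechanism making this sandwich nonempty is the integrability of $1/\lambda$, entering through the master estimate above. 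The genuinely delicate bookkeeping is tracking the constants so that the final growth bound is honestly $\delta\log|z|^2$ rather than a multiple of it, which is where the precise calibration of $a$ and $b$ in \eqref{eq:Integ_1}--\eqref{eq:Integ_2} is spent.
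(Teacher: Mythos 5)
Your proposal is correct and is essentially the paper's own argument: the paper likewise takes the radial ansatz $\varphi=\kappa(|z|^2)$ with $t\kappa'(t)=\int_0^t\vartheta$ for a continuous truncation $\vartheta$ of $1/\lambda$ (your $w$), spends \eqref{eq:Integ_1}--\eqref{eq:Integ_2} in exactly the roles of your ``master estimate'', and obtains condition (2) by the equivalent device of $\kappa''\le 0$ combined with $i\partial|z|^2\wedge\bar\partial|z|^2\le |z|^2\, i\partial\bar\partial|z|^2$ (note $\kappa''\le0$ is precisely your tangential inequality $w\ge t/\lambda$ in disguise) instead of your explicit tangential/radial eigenvalue split. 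One small repair: $\lambda$ is only assumed continuous and increasing, so you cannot differentiate it to get the ``nonnegative derivative $t\lambda'/\lambda^2$''; instead derive the tangential bound directly from monotonicity, namely $w(t)-t/\lambda(t)\ge b^2/\lambda(b^2)+(t-b^2)/\lambda(t)-t/\lambda(t)=b^2\left[1/\lambda(b^2)-1/\lambda(t)\right]\ge 0$.
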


  \begin{proof}
   Let
$\vartheta$ be a nonnegative continuous function on ${\mathbb R}$ satisfying  $\vartheta|_{(-\infty,a^2]}=0$,  $\vartheta=1/\lambda$ on $[(a+1)^2,\infty)$, and $0< \vartheta\le 1/\lambda$ on $(a^2,(a+1)^2)$. Following \cite{GreeneWuBook}, p.\,184--185, we define
  $$
  \kappa(t)=\int_{0}^t \frac{dr_1}{r_1} \int_{0}^{r_1}\vartheta(r_2) {dr_2},\ \ \ t\ge 0.
  $$
  Clearly, $\kappa$ is $C^2$ and satisfies
  $$
  \kappa'(t)+t\kappa''(t)=1/\lambda(t),\ \ \ t\ge (a+1)^2.
  $$
  Since  $\lambda$ is increasing, it follows that for $t\ge b^2$,
 $$
 \frac{t}{\lambda(t)}\le 2 \int_{t/2}^{t} \frac{dr}{\lambda(r)}\le 2 \int_{b^2/2}^{\infty} \frac{dr}{\lambda(r)}\le \int_0^t \vartheta(r)dr
  $$
  in view of $(\ref{eq:Integ_2})$.
  Thus for  $t\ge b^2$,
  $$
  \kappa''(t)=\frac1{t^2}\left[\frac{t}{\lambda(t)}-\int_0^t \vartheta(r)dr \right]\le 0.
  $$
   Set $\varphi(z)=\kappa(|z|^2)$, $z\in {\mathbb C}^N$. If $|z|\ge b$, then
  \begin{eqnarray}\label{eq:Hessian}
  i\partial \bar{\partial} \varphi & = & \kappa'(|z|^2)i\partial\bar{\partial}|z|^2+\kappa''(|z|^2)i\partial |z|^2\wedge \bar{\partial} |z|^2\nonumber\\
  & \ge & [\kappa'(|z|^2)+|z|^2\kappa''(|z|^2)]i\partial\bar{\partial}|z|^2\nonumber\\
  & = & \frac{i\partial\bar{\partial}|z|^2}{\lambda(|z|^2)}.
  \end{eqnarray}
  Since
 $
 \kappa(t)\le  \delta \log t
 $
 for $t\ge a^2$ in view of $(\ref{eq:Integ_1})$,
  it follows that
  $
  \varphi(z)\le \delta \log |z|^2
  $
  whenever $|z|\ge b> a$.
  \end{proof}

        \begin{proof}[Proof of Theorem \ref{th:finite}]
          We define
$$
{\lambda}(t):= \min\{ \mu(t),(1+t)\left[\log (3+t)\right]^2\},\ \ \ t\ge 0.
$$
Clearly, $\lambda$ is a continuous increasing function with $\int_1^\infty dt/\lambda(t)<\infty$.
      Applying Lemma \ref{lm:Construction} with $\lambda$ replaced by $\lambda/(p+1)$, we obtain a nonnegative $C^2$ psh function $\varphi$ on ${\mathbb C}^N$ such that
      $\varphi(z)\le (2-\alpha) \log |z|$ and
 $$
 i\partial\bar{\partial}\varphi\ge \frac{p+1}{\lambda(|z|^2)} {i\partial\bar{\partial}|z|^2}
 $$
  whenever $|z|\ge b=b(\alpha,p)\gg 1$. Since $\mu\ge \lambda$, it follows from  $(\ref{eq:scalar})$ that
$$
  {\rm Tr}_g(\Theta_h) \le 1/{\lambda}(\rho^2)
$$
whenever $\rho\ge b\gg 1$.

For every holomorphic section $s$ of $L$ over $M$ we have
      \begin{eqnarray*}
      \Delta\left[\frac{p}4\log |s|^2_{h}+\frac{\varphi}2\right]
      &\ge & -p {\rm Tr}_g(\Theta_h)  + 2\Box \varphi\\
    &  \ge & \frac1{\lambda(\rho^2)}
            \end{eqnarray*}
            whenever  $\rho\ge b$. Set $\psi=\exp(\frac{p}4\log |s|^2_{h}+\frac{\varphi}2)$. Then we have
            \begin{equation}\label{eq:Delta}
  \Delta \psi \ge \frac{\psi}{\lambda(\rho^2) }\ \ \ \text{for \ \ } \rho\ge b.
  \end{equation}
Let $\chi$ and $\sigma_\delta$ be chosen as in the proof of Theorem \ref{th:finite_0}.  We set
  $$
  \phi=[1-\chi(\rho/b)]\chi(R^{-1}\log \log \rho),\ \ \ R\gg b,
  $$
and $\psi_\delta:=\delta\sigma(\psi/\delta)$.   Applying $(\ref{eq:Laplace+})$ with $\eta(t)=t$ and $\gamma=1/2$, we have
  $$
  \int_M \phi^2\psi_\delta \Delta \psi_\delta  dV \le 2 \int_M \psi_\delta^2 |\nabla \phi|^2 dV
    $$
    so that
    $$
    \int_{\{\phi=1\}\cap \{\psi\ge \delta\}}\frac{\psi^2}{\lambda(\rho^2) } dV \le 2 \int_M \psi_\delta^2 |\nabla \phi|^2 dV
        $$
        in view of $(\ref{eq:Delta})$.
 Letting  $\delta\rightarrow 0$, we obtain
  \begin{eqnarray*}
  \int_{2b<\rho< \exp\exp R} \frac{\psi^2}{\lambda(\rho^2) } dV &\le &
   2\int_{b<\rho<2b} \frac{\psi^2}{b^2} dV+ \frac2 {R^2}\int_{R<\log\log \rho<2R} \frac{\psi^2}{ \rho^2 \log^2\rho} dV.
   \end{eqnarray*}
   Letting $R\rightarrow +\infty$, we have
   $$
   \int_{\rho>2b} \frac{\psi^2}{\lambda(\rho^2)} dV \le \frac{2}{b^2}\int_{b<\rho<2b} {\psi^2} dV
      $$
      provided
      \begin{equation}\label{eq:LpCondition}
      \int_M \frac{\psi^2}{\lambda(\rho^2)} dV= \int_M |s|^p_{h} \frac{e^\varphi}{\lambda(\rho^2)} dV<\infty,
                  \end{equation}
                  for $(\rho\log \rho)^{-2}\le {\rm const}/\lambda(\rho^2) $ whenever $\rho\ge 1$.
      Thus
  $$
  \int_M \frac{\psi^2}{\lambda(\rho^2)} dV \le \int_{{\rho<2 b}} \frac{\psi^2}{\lambda(\rho^2)} dV+\frac2{b^2} \int_{b<\rho<2b} {\psi^2} dV,
    $$
    i.e.
    \begin{equation}\label{eq:KeyIneq}
    \int_M |s|^p_{h}  \frac{e^\varphi}{\lambda(\rho^2)} dV \le \int_{\rho<2b} |s|^p_{h}  \frac{e^\varphi}{\lambda(\rho^2)} dV+\frac2{b^2} \int_{b<\rho<2b}|s|^p_{h} e^\varphi  dV.
        \end{equation}

        Let $\widetilde{H}$ be the vector space of holomorphic sections of $L$ over $M$ which satisfy $(\ref{eq:LpCondition})$. We may verify similarly as the proof of Theorem \ref{th:finite_0} that  $\widetilde{H}$ is finite-dimensional.
 Since
 $$
 \int^\infty_1 \frac{dt}{\lambda(t)}\ge \int^t_{t/2} \frac{dt}{\lambda(t)}\ge \frac{t}{2\lambda(t)},
 $$
 we have $\lambda(t)\ge {\rm const}\cdot t$, so that
        $$
        \frac{e^\varphi}{\lambda(\rho^2)} =O(\rho^{-\alpha})
                        $$
            for $\rho$ large enough. Thus we have
         $\widetilde{ H}\supset \{s\in H^0(M,L):\int_M|s|_h^p\,\rho^{-\alpha}dV_g<\infty\}$, so that the latter is also finite-dimensional.
  \end{proof}
\section{Proof of Theorem \ref{cor:finite_cohomology}}
Let $(M,g)$ be an $n$-dimensional closed complex submanifold in $\C^N$, where $g$ is the restriction of the Euclidean metric, and let $L$ be a holomorphic line bundle over $M$ with a Hermitian metric $h$. Let $\{U_\alpha\}$ be an open covering of $M$ such that $L|_{U_\alpha}$ is trivial and $h$ may be written as $h=\{h_\alpha\}=\{e^{-\varphi_\alpha}\}$, where $\varphi_\alpha$ is a smooth function on $U_\alpha$. We 
also write $h=e^{-\varphi}$, where $\varphi=\{\varphi_\alpha\}$. Under these notations we have
\[
\Theta_h|_{U_\alpha}=i\partial\overline{\partial}\varphi|_{U_\alpha}:=i\partial\overline{\partial}\varphi_\alpha, \ \ \ \forall\,\alpha.
\]
 Note that $i\partial\overline{\partial}\varphi$ is a  globally defined (1,1)-form  on $M$ since $L$ is a holomorphic line bundle.

 Let $\mathcal{D}_{p,q}(M,L)$ be the set of  smooth $L$-valued $(p,q)$-forms on $M$ with a compact support and let $L^2_{p,q}(M,L)$ be the completion of  $\mathcal{D}_{p,q}(M,L)$ with respect to the  norm $\|\cdot\|_h^2:=\int_M|\cdot|_{g,h}^2dV_g$. Let  $\overline{\partial}^*_\varphi$ denote  the formal adjoint of $\overline{\partial}:\mathcal{D}_{p,q}(M,L)\rightarrow{\mathcal{D}_{p,q+1}(M,L)}$. The operators  $\overline{\partial}$ and $\overline{\partial}^*_\varphi$  extend to (unbounded) operators on $L^2_{p,q}(M,L)$, which are still denoted by the same symbols. The Hilbert space of $L$-valued harmonic $(p,q)$-forms on $M$ is defined to be
\[
\mathcal{H}^{p,q}_{L^2}(M,L):=\left\{u\in{L^2_{p,q}(M,L)}: \overline{\partial}u=\overline{\partial}^*_\varphi{u}=0\right\}.
\]
The Hodge isomorphism theorem asserts that
\[
\mathcal{H}^{p,q}_{L^2}(M,L)\cong \left.(\mathrm{Ker}\,\overline{\partial}\cap{L^2_{p,q}(M,L)})\right/\overline{\mathrm{Im}\,\overline{\partial}\cap{L^2_{p,q}(M,L)}},
\]
where the RHS is the reduced $L^2$ $\overline{\partial}$-cohomology group.   On the other hand, 
 the Bochner-Kodaira-Nakano identity (see e.g., \cite{DemaillyBook}) gives
\begin{equation}\label{eq:BKN}
\lVert{\overline{\partial}u}\rVert_h^2+\lVert{\overline{\partial}^*_\varphi{u}}\rVert_h^2\geq\int_M\big\langle{[i\partial\overline{\partial}\varphi,\Lambda]u,u}\big\rangle_hdV,
\end{equation}
where $\Lambda$ is the formal adjoint of the Lefschetz operator $L=\omega\wedge \cdot$  with $\omega$ being the K\"ahler form of $g$. For any $x\in{M}$ one can choose a local coordinate $(\zeta_1,\cdots,\zeta_n)$ on $M$ such that $\zeta_j(x)=0$ for $1\leq{j}\leq{n}$ and
\[\omega=\frac{i}{2}\partial\overline{\partial}|\zeta|^2+O(|\zeta|),\ \ \ i\partial\overline{\partial}\varphi=\frac{i}{2}\sum^n_{j=1}\lambda_jd\zeta_j\wedge{d\overline{\zeta}_j}+O(|\zeta|)\]
at $x$, where $\lambda_1,\cdots,\lambda_n$ are the eigenvalues of $i\partial\overline{\partial}\varphi$ with respect to $\omega$ and they are independent of the choice of  coordinates. We set
\[\lambda_{IJ}(\varphi)=\sum_{j\in{I}}\lambda_j+\sum_{j\in{J}}\lambda_j-\sum^n_{j=1}\lambda_j.\]
For $u=\sum'_{I,J}u_{IJ}d\zeta_I\wedge{d\overline{\zeta}_J}\otimes\xi$, where $\xi$ is a local frame of $L$, we have
\begin{equation}\label{eq:BKN RHS}
\big\langle{[i\partial\overline{\partial}\varphi,\Lambda]u,u}\big\rangle_h={\sum}'_{I,J}\lambda_{IJ}(\varphi)|u_{IJ}|^2e^{-\varphi}.
\end{equation}
Here $\sum'$ means that the summations are taken over increasing multi-indices.

\begin{theorem}\label{th:finite_cohomology}
 Suppose there exists a positive number $r$ such that
$$
{\sum}'_{I,J}\lambda_{IJ}(\varphi)\rho^2+2(n-p-q-2)\geq_{\mathrm{as}}r.
$$
Then $\dim\mathcal{H}^{p,q}_{L^2}(M,L)<\infty$.
\end{theorem}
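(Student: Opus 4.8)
The plan is to mimic the endgame of Theorem~\ref{th:finite_0}: I will establish a \emph{concentration inequality}
$$
\lVert u\rVert_h^2\ \le\ C\int_{\{\rho<b\}}|u|_{g,h}^2\,dV_g
$$
for every $u\in\mathcal{H}^{p,q}_{L^2}(M,L)$, with $C$ and $b$ independent of $u$, and then deduce finiteness by a compactness argument. For the last step one equips the Hilbert space $\mathcal{H}^{p,q}_{L^2}(M,L)$ with the norm $\lVert\cdot\rVert_h$ and verifies condition (2) of Proposition~\ref{prop:finitedimen}: given a sequence in the unit ball, interior ellipticity of the Kodaira Laplacian (each $u$ is smooth by elliptic regularity) together with Rellich's theorem extracts a subsequence converging in $L^2(\{\rho<2b\})$, and the concentration inequality upgrades this to convergence in $\lVert\cdot\rVert_h$. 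This replaces the Montel/diagonal argument used for holomorphic sections in Theorem~\ref{th:finite_0}.

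To produce the concentration inequality I start from \eqref{eq:BKN}. Since $u$ is harmonic it is not compactly supported, so I apply \eqref{eq:BKN} not to $u$ but to the compactly supported test form $\chi u$, where $\chi=\chi(\rho)$ is a cut-off. Because $\overline{\partial}u=\overline{\partial}^*_\varphi u=0$, the Leibniz rule gives $\overline{\partial}(\chi u)=\overline{\partial}\chi\wedge u$ and $\overline{\partial}^*_\varphi(\chi u)=-\,\overline{\partial\chi}\,\lrcorner\,u$, both pointwise bounded by $|\nabla\chi|\,|u|_{g,h}$, while the commutator is algebraic and satisfies $\langle[i\partial\overline{\partial}\varphi,\Lambda]\chi u,\chi u\rangle=\chi^2\langle[i\partial\overline{\partial}\varphi,\Lambda]u,u\rangle$. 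Feeding this into \eqref{eq:BKN} and using \eqref{eq:BKN RHS} yields
$$
\int_M\chi^2\,{\sum}'_{I,J}\lambda_{IJ}(\varphi)\,|u_{IJ}|^2e^{-\varphi}\,dV_g\ \le\ C\int_M|\nabla\chi|^2\,|u|_{g,h}^2\,dV_g .
$$
By itself this is useless, since the hypothesis only bounds ${\sum}'_{I,J}\lambda_{IJ}(\varphi)$ from below by $\big(r-2(n-p-q-2)\big)/\rho^2$, which may be negative.

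The missing positivity is supplied by the \textbf{Donnelly--Fefferman trick}, realized here through the special geometry of $\rho$. By Lemma~\ref{lm:ddbar} and \eqref{eq:Basic} one has $i\partial\overline{\partial}\rho^2=2\omega$, whence
$$
i\partial\overline{\partial}\log\rho^2=\frac{2\omega}{\rho^2}-\frac{i\partial\rho^2\wedge\overline{\partial}\rho^2}{\rho^4};
$$
together with $|\nabla\rho|\le1$ this shows that the eigenvalues of $i\partial\overline{\partial}\log\rho^2$ relative to $\omega$ equal $2/\rho^2$ in $n-1$ directions and lie in $[0,2/\rho^2]$ in the distinguished $\partial\rho$-direction. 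Consequently $\lambda_{IJ}(\log\rho^2)$ is controlled, up to an $O(1/\rho^2)$ error coming from the distinguished direction, by $2(p+q-n)/\rho^2$. Replacing $\varphi$ by $\varphi$ plus an appropriately signed, \emph{bounded} multiple of $\log\rho^2$ (so that the $L^2$ norms stay comparable) therefore shifts the curvature term in the displayed inequality by $\tfrac{2(n-p-q-2)}{\rho^2}\,|u|_{g,h}^2$, one unit of the $\rho^{-2}$-order being absorbed by the self-gradient term of the twist and by the $|\nabla\rho|^2$-error. Combined with the hypothesis ${\sum}'_{I,J}\lambda_{IJ}(\varphi)\rho^2+2(n-p-q-2)\ge_{\mathrm{as}}r$, the effective curvature becomes $\ge\frac{r}{\rho^2}\,|u|_{g,h}^2$ on $\{\rho\ge b\}$, so I reach a genuine Hardy-type inequality in the spirit of Lemma~\ref{lm:Hardy}.

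Finally I run the cut-off iteration of Theorem~\ref{th:finite_0} essentially verbatim: taking $\chi=[1-\chi_0(\rho/b)]\chi_0(\rho/R)$, estimating the two annular gradient integrals by the Caccioppoli inequality~\eqref{eq:CacciopoliIneq}, and letting $R\to\infty$, the far annulus disappears and the positivity $r>0$ lets me absorb the remaining $\rho^{-2}$-terms, leaving the concentration inequality. I expect the \textbf{main obstacle} to be precisely the twisting step: constructing a bounded twist that realizes the \emph{full} gain $2(n-p-q-2)$ uniformly over all component pairs $(I,J)$, since $\lambda_{IJ}(\log\rho^2)$ genuinely depends on $(I,J)$ through the distinguished $\partial\rho$-direction, and bookkeeping so that every error term is either dominated by $r>0$ or supported in $\{\rho<b\}$. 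The degree restrictions $p+q<n-2$ or $p+q>n+2$ in the applications are exactly what give $2(n-p-q-2)$ the sign that makes this gain usable.
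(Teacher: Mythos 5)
Your plan follows the paper's proof essentially step for step: the Donnelly--Fefferman twist by $\log\rho^2$, the eigenvalue computation for $i\partial\overline{\partial}\log|z|^2$ restricted to $M$ (Lemma \ref{lm:eigenvalues}), the net curvature gain of order $2(n-p-q-2-\varepsilon)\lVert u\rVert_h^2$, the cut-off argument yielding a concentration inequality, and the compactness endgame via Proposition \ref{prop:finitedimen}.

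The one place you deviate is the step you yourself flag as the main obstacle, and it is in fact a non-issue: you do not need a bounded twist, and you do not need the twisted and untwisted $L^2$ norms to be comparable (indeed no constant multiple of $\log\rho^2$ keeps them comparable, since the weight changes by an unbounded power of $\rho$). The paper takes the unbounded $\psi=-\log\rho^2$, sets $\widetilde h=he^{-\psi}=h\rho^2$, and keeps the \emph{original} adjoint $\overline{\partial}^*_\varphi$ on the left-hand side by writing $\overline{\partial}^*_{\varphi+\psi}=\overline{\partial}^*_\varphi+\overline{\partial}\psi\lrcorner$ and applying Cauchy--Schwarz; this is exactly \eqref{eq:BKN II}. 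Comparability is never invoked because every term converts back to $h$-norms on the nose: the commutator term satisfies $\lambda_{IJ}(\psi)\rho^2\ge 2(n-1-p-q)$, a worst-case bound valid for \emph{every} pair $(I,J)$ (which also disposes of your uniformity worry); the self-gradient error costs only $2(1+\varepsilon)\lVert u\rVert^2_h$ since $|\overline{\partial}\psi|^2\le 2\rho^{-2}$; and the annular error terms obey $|\nabla\phi|^2\,|u|^2_{\widetilde h}=|\nabla\phi|^2\rho^2\,|u|^2_h\lesssim|u|^2_h$ because $|\nabla\phi|\lesssim b^{-1}$ (resp. $R^{-1}$) exactly where $\rho\asymp b$ (resp. $\rho\asymp R$). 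With that bookkeeping your concentration inequality is \eqref{eq:finiteness_cohomology}. A further simplification: the Caccioppoli inequality \eqref{eq:CacciopoliIneq} and the regularization $\psi_\delta$ from Theorem \ref{th:finite_0} are not needed here, since after applying \eqref{eq:BKN III} to $\phi u$ the error terms are already zeroth order in $u$, and the far annulus vanishes as $R\to\infty$ simply because $\lVert u\rVert_h<\infty$.
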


To prove Theorem \ref{th:finite_cohomology}, we need the following lemma on eigenvalues of $i\partial\overline{\partial}\log|z|^2$ with respect to $i\partial\overline{\partial}|z|^2$, where $z=(z_1,\cdots,z_N)$ is the standard coordinate on $\C^N$.

\begin{lemma}\label{lm:eigenvalues}
For any $z_0\in\mathbb{C}^N$ there exists a local coordinate $w=(w_1,\cdots,w_N)$ near $z_0$ such that $w(z_0)=0$ and
\[
\omega=\frac{i}{2}\partial\overline{\partial}|z|^2=\frac{i}{2}\sum^N_{k=1}dw_k\wedge{d\overline{w}_k},\ \ \ i\partial\overline{\partial}\log|z|^2=\frac{i}{2}\sum^N_{k=2}\frac{2}{|z|^2}dw_k\wedge{d\overline{w}_k}
\]
at $z_0$.
\end{lemma}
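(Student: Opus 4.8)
The plan is to prove this as a statement of \emph{simultaneous diagonalization} of two Hermitian forms at the single point $z_0$, one of which (the flat form $\omega$) is positive definite and invariant under the full unitary group. First I would compute $i\partial\overline{\partial}\log|z|^2$ explicitly. Using the identity $\partial\overline{\partial}\log f = f^{-1}\partial\overline{\partial} f - f^{-2}\,\partial f\wedge\overline{\partial} f$ with $f=|z|^2$, one finds
$$
i\partial\overline{\partial}\log|z|^2 = \frac{i}{|z|^2}\sum_{k=1}^N dz_k\wedge d\overline{z}_k - \frac{i}{|z|^4}\sum_{j,k=1}^N \overline{z}_j z_k\, dz_j\wedge d\overline{z}_k,
$$
whose associated Hermitian form sends a tangent vector $v$ to $|z|^{-2}\bigl(|v|^2 - |z|^{-2}|\langle v, z\rangle|^2\bigr)$, i.e. $|z|^{-2}$ times the squared length of the orthogonal projection of $v$ onto the complex hyperplane $z^{\perp}$. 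In particular, relative to the standard Hermitian inner product this form has eigenvalue $0$ along $z$ and eigenvalue $|z|^{-2}$ on $z^{\perp}$; since $\omega=\tfrac12 i\partial\overline{\partial}|z|^2$ corresponds to $\tfrac12|v|^2$, the eigenvalues \emph{with respect to} $\omega$ are $0$ (multiplicity one, in the direction of $z$) and $2/|z|^2$ (multiplicity $N-1$, on $z^{\perp}$). This already matches the right-hand sides claimed in the lemma.

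Next I would exhibit a single coordinate change realizing both normalizations at once. Fix a unitary matrix $U\in U(N)$ whose first column is the unit vector $z_0/|z_0|$, so that its remaining columns form a unitary basis of $z_0^{\perp}$, and set $w = U^*(z-z_0)$, an affine-linear, hence holomorphic, coordinate with $w(z_0)=0$. Writing $z = Uw + z_0$, unitarity gives $|z|^2 = |w|^2 + (\text{affine, hence pluriharmonic, terms})$; since $\partial\overline{\partial}$ annihilates the affine terms, $\tfrac{i}{2}\partial\overline{\partial}|z|^2 = \tfrac{i}{2}\sum_k dw_k\wedge d\overline{w}_k$ holds identically in $w$, not merely at $z_0$. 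For the second form, the columns of $U$ are by construction an orthonormal frame adapted to the eigenspace decomposition found above: $w_1$ points along $z_0$ and $w_2,\dots,w_N$ span $z_0^{\perp}$. Hence at $z_0$ the cross terms vanish and the diagonal coefficients are exactly $0$ for $k=1$ and $2/|z_0|^2$ for $k\geq 2$, giving $i\partial\overline{\partial}\log|z|^2 = \tfrac{i}{2}\sum_{k=2}^N (2/|z|^2)\,dw_k\wedge d\overline{w}_k$ at $z_0$.

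The only point requiring care is that one and the same coordinate change must normalize both forms simultaneously. This is possible precisely because $\omega = i\partial\overline{\partial}|z|^2$ is invariant under the whole unitary group together with translations, so I retain the full freedom to rotate the frame to the eigenbasis of $i\partial\overline{\partial}\log|z|^2$ without disturbing the standard form of $\omega$; the alignment of the zero-eigendirection with $z_0$ is exactly what forces the sum in the second identity to start at $k=2$. The remaining verifications --- the explicit $\partial\overline{\partial}\log$ computation and the bookkeeping of the factor $2$ coming from the normalization $\omega=\tfrac12 i\partial\overline{\partial}|z|^2$ --- are routine.
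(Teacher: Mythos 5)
Your proof is correct and takes essentially the same route as the paper's: both split $i\partial\overline{\partial}\log|z|^2$ into $|z|^{-2}\,i\partial\overline{\partial}|z|^2$ minus the rank-one term $|z|^{-4}\,i\partial|z|^2\wedge\overline{\partial}|z|^2$, and both diagonalize it by a unitary rotation whose first coordinate direction is $z_0/|z_0|$. The only cosmetic difference is that you describe the rank-one term as the orthogonal projection onto $z^{\perp}$ and write the coordinate change explicitly as $w=U^{*}(z-z_0)$, whereas the paper obtains the same cancellation by computing $i\partial|z|^2\wedge\overline{\partial}|z|^2=|z|^2\,dw_1\wedge d\overline{w}_1$ directly from the unitary matrix entries.
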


\begin{proof}
 First of all, we have
\[i\partial\overline{\partial}\log|z|^2=\frac{i\partial\overline{\partial}|z|^2}{|z|^2}-\frac{i\partial|z|^2\wedge\overline{\partial}|z|^2}{|z|^4}.\]
 Choose a local coordinate $w$ at $z_0$ such that $w(z_0)=0$,
\[\frac{\partial}{\partial{w_1}}=\sum^N_{j=1}\frac{z_j}{|z|}\frac{\partial}{\partial{z_j}}\]
and $\partial/\partial{w_1},\cdots,\partial/\partial{w_N}$ are orthonormal at $z_0$.   Write $\partial/\partial{w_k}=\sum^N_{j=1}a_{kj}\partial/\partial{z_j}$ at $z_0$ for some unitary matrix $(a_{kj})$, so that
$dz_j=\sum^N_{k=1}a_{kj}dw_k$ at $z_0$. Hence
\[i\partial\overline{\partial}|z|^2=i\sum^N_{j=1}dz_j\wedge{d\overline{z}_j}=i\sum^N_{k=1}dw_k\wedge{d\overline{w}_k}\]
at $z_0$. Moreover, we have
\begin{eqnarray*}
i\partial|z|^2\wedge\overline{\partial}|z|^2&=&\sum_{j,l}\overline{z}_jz_ldz_j\wedge{d\overline{z}_l}\\
&=&\sum_{j,l,k,m}\overline{z}_jz_la_{kj}\overline{a}_{ml}dw_k\wedge{d\overline{w}_m}
\end{eqnarray*}
at $z_0$. Since $a_{1j}=z_j/|z|$ at $z_0$, we have $\sum^N_{j=1}a_{kj}\overline{z}_j=|z|\sum^N_{j=1}a_{kj}\overline{a}_{1j}=|z|\delta_{1k}$, so that
\[
i\partial|z|^2\wedge\overline{\partial}|z|^2=\sum_{k,m}|z|^2\delta_{1k}\delta_{1m}dw_k\wedge{d\overline{w}_m}=|z|^2dw_1\wedge{d\overline{w}_1}.
\]
Hence
\[i\partial\overline{\partial}\log|z|^2=\frac{i}{|z|^2}\sum^N_{k=2}dw_k\wedge{d\overline{w}_k}.\]
\end{proof}
\begin{proof}[Proof of Theorem \ref{th:finite_cohomology}]
Let $\psi$ be a real-valued smooth function on $M$. Consider a new Hermitian metric $\widetilde{h}=e^{-\varphi-\psi}$ of $L$. Since $\overline{\partial}^*_{\varphi+\psi}=\overline{\partial}^*_\varphi+\overline{\partial}\psi\lrcorner$, where ``$\lrcorner$'' is the contarction operator, it follows from  the Bochner-Kodaira-Nakano inequality and the Cauchy-Schwarz inequality that
\begin{equation}\label{eq:BKN II}
\lVert{\overline{\partial}u}\rVert^2_{\widetilde{h}}+(1+\varepsilon^{-1})\lVert{\overline{\partial}^*_\varphi{u}}\rVert^2_{\widetilde{h}}
\geq \int_M\big\langle{[i\partial\overline{\partial}(\varphi+\psi),\Lambda]u,u}\big\rangle_{\widetilde{h}}dV -(1+\varepsilon)\lVert{\overline{\partial}\psi\lrcorner{u}}\rVert^2_{\widetilde{h}}
\end{equation}
for any $u\in\mathcal{D}_{p,q}(M,L)$ and $\varepsilon>0$.

Let $\rho$ be the restriction of the function $|z|$ to $M$ and take $\psi=-\log\rho^2$. Then $i\partial\overline{\partial}\psi$ is the restriction of $i\partial\overline{\partial}\log|z|^2$ to the tangent spaces of $M$. Let $\lambda_1,\cdots,\lambda_n$ be defined as above for $\psi$ at a given point $x$. We may assume   $\lambda_1\leq\cdots\leq\lambda_n$. Let $V$ be the subspace of $\mathbb{C}^N$ spanned by the vectors $\partial/\partial{w}_2,\cdots,\partial/\partial{w_N}$ at $x$ which are defined as in Lemma \ref{lm:eigenvalues}. Then $\dim_\C{T_xM\cap{V}}=n-1$ and Lemma \ref{lm:eigenvalues} implies that $\lambda_1=\cdots=\lambda_{n-1}=-2\rho^{-2}$ and $-2\rho^{-2}\leq\lambda_n\leq0$. Hence 
\[
\lambda_{IJ}(\psi)\geq2(n-1-p-q)\rho^{-2},
\]
so that
\begin{eqnarray*}
\big\langle{[i\partial\overline{\partial}\psi,\Lambda]u,u}\big\rangle_{\widetilde{h}}
& \geq &
2(n-1-p-q)\rho^{-2}|u|_{\widetilde{h}}^2\\
& = &
2(n-1-p-q)|u|_h^2.
\end{eqnarray*}
Since
\[
|\bar{\partial}\psi|^2=\frac{1}{2}|d\psi|^2=\frac{1}{2}|\nabla\psi|^2=\frac{2|\nabla\rho|^2}{\rho^2}\leq\frac{2}{\rho^2}
\]
By \eqref{eq:BKN II}, we have
\begin{eqnarray}\label{eq:BKN III}
\lVert{\overline{\partial}u}\rVert^2_{\widetilde{h}}+(1+\varepsilon^{-1})\lVert{\overline{\partial}^*_\varphi{u}}\rVert^2_{\widetilde{h}}
 & \ge &   \int_M\big\langle{[i\partial\overline{\partial}\varphi,\Lambda]u,u}\big\rangle_{\widetilde{h}}dV\nonumber\\
&& + 2(n-p-q-2-\varepsilon)\lVert{u}\rVert^2_h.
\end{eqnarray}

Fix $u\in\mathcal{H}^{p,q}_{L^2}(M,L)$. Suppose  $\lambda_{IJ}(\varphi)\rho^2+2(n-p-q-2)\geq_{\mathrm{as}}{r}>0$ on $\{\rho>b\}$. For any $R>>b$, let $\phi$ be the cut-off function as in the proof of Theorem \ref{th:finite_0}, and choose $\varepsilon=r/4$. Since $\overline{\partial}u=\overline{\partial}^\ast_\varphi{u}=0$, it follows from \eqref{eq:BKN III}  that there exists a constant $C=C(r)>0$ such that
\begin{eqnarray*}
\lVert{\phi{u}}\rVert_h^2 & \leq & C\left(\lVert{\overline{\partial}(\phi{u})}\rVert_{\widetilde{h}}^2+\lVert{\overline{\partial}^*_\varphi(\phi{u})}\rVert_{\widetilde{h}}^2\right)\\
& \lesssim & \lVert{|\nabla\phi|\cdot{u}}\rVert_{\widetilde{h}}^2\\
& \lesssim & \int_{b<\rho <2b}|u|^2_hdV+C\int_{R<\rho <2R}|u|^2_hdV.
\end{eqnarray*}
Letting $R\rightarrow\infty$ we get
\[\int_{\rho>2b}|u|^2_hdV\lesssim\int_{b<\rho<2b}|u|^2_hdV\]
so that
\begin{equation}\label{eq:finiteness_cohomology}
\lVert{u}\rVert^2_h\lesssim\int_{\rho\leq{2b}}|u|^2_hdV.
\end{equation}

Let $\{u_j\}$ be a sequence of elements in $\mathcal{H}^{p,q}_{L^2}(M,L)$ such that $\lVert{u_j}\rVert_h\leq{1}$.
G$\mathring{\mathrm{a}}$rding's  inequality combined with Sobolev's estimate yields that $\{u_j\}$ is a normal family so that we have a subsequence $\{u_{j_k}\}$ converging locally uniformly to some $L$-valued harmonic form $u$ on $M$. Moreover, Fatou's lemma implies that $\lVert{u}\rVert_h\leq1$, in particular, $u\in\mathcal{H}^{p,q}_{L^2}(M,L)$. By \eqref{eq:finiteness_cohomology} we have $\lVert{u_{j_k}-u}\rVert\rightarrow0$ when $k\rightarrow\infty$. Hence $\dim_\C\mathcal{H}^{p,q}_{(2)}(M,L)<\infty$ in view of Proposition \ref{prop:finitedimen}.
\end{proof}

Theorem \ref{th:finite_cohomology} combined with Serre's duality immediately gives Theorem \ref{cor:finite_cohomology}.

\section{Proof of Theorem \ref{th:affine}}

 Let us  write the coordinate in $\mathbb C^N$ by $z=(z',z'')$, where $z'=(z_1,\cdots,z_n)$ and $z''=(z_{n+1},\cdots,z_N)$. Then we have

\begin{proposition}[see \cite{Chirka}, Chapter 1, \S\,7.3, \S\,7.4]\label{prop:affine criteria}
Let $M$ be a smooth $n$-dimensional  affine-algebraic variety in $\mathbb C^N$. Then there exists, after some unitary transformation,  a constant $C>0$ such that $|z''|\leq{C}(1+|z'|)$ for all $z\in{M}$ and the projection
\[\pi:M\rightarrow\C^n,\ \ \ z\mapsto{z'}\]
is a branched covering.
\end{proposition}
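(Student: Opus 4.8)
The plan is to pass to the projective closure of $M$, arrange by a generic unitary change of coordinates that the projection is proper, and then read off both the linear growth bound and the branched-covering structure from this properness.

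First I would regard $\C^N$ as the affine chart $\{z_0\neq0\}$ of $\mathbb{P}^N$ and let $\overline{M}$ denote the projective (Zariski) closure of $M$ in $\mathbb{P}^N$, with $H_\infty=\{z_0=0\}$ the hyperplane at infinity. Since $\dim M=n$, the closure $\overline{M}$ is an $n$-dimensional projective variety and $Z:=\overline{M}\cap H_\infty$ is an algebraic subset of dimension at most $n-1$. For a splitting $z=(z',z'')$ the fibre of $\pi$ is the intersection of $M$ with an affine $(N-n)$-plane, and the associated ``center of projection at infinity'' is the projective subspace $L_\infty=\{z_0=z_1=\cdots=z_n=0\}\subset H_\infty$, of dimension $N-n-1$. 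The whole proof hinges on choosing coordinates so that $L_\infty\cap Z=\emptyset$.

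Second I would carry out this choice by a dimension count. Because $(n-1)+(N-n-1)=N-2<N-1=\dim H_\infty$, a generic projective subspace of dimension $N-n-1$ misses $Z$; and since the unitary group $U(N)$ acts transitively on the relevant Grassmannian, the set of ``bad'' unitaries $U$ for which $U(L_\infty)$ meets $Z$ is the image of an incidence variety of positive codimension, hence a set of measure zero in $U(N)$. After a generic unitary transformation we may therefore assume $L_\infty\cap\overline{M}=\emptyset$. From this disjointness I would extract two facts at once. If $z^{(k)}\in M$ with $|z^{(k)}|\to\infty$, a subsequence of $[1:z^{(k)}]$ converges in $\mathbb{P}^N$ to a point of $Z$; disjointness from $L_\infty$ forces one of the first $n$ homogeneous coordinates of that limit to be nonzero, i.e. $|z'^{(k)}|\gtrsim|z^{(k)}|$. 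A compactness argument over the compact set $Z$ upgrades this to a uniform bound $|z'|\ge c_0|z|$ for $|z|\ge R$, which is precisely the desired estimate $|z''|\le C(1+|z'|)$; in particular $\pi^{-1}$ of any bounded set is bounded and closed, so $\pi$ is proper.

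Finally I would deduce the branched-covering property. Each fibre $\pi^{-1}(w)=M\cap(\{w\}\times\C^{N-n})$ is a compact analytic subset of an affine space by properness, hence finite by the maximum principle, so $\pi$ is a proper holomorphic map with finite fibres between equidimensional manifolds; its image is then an $n$-dimensional closed analytic subset of the irreducible space $\C^n$, hence all of $\C^n$. The standard structure theorem for proper finite analytic projections (Chirka, \textit{loc.\ cit.}) now yields that $\pi$ is a branched covering: there is a proper algebraic discriminant set $A\subset\C^n$ outside of which $\pi$ is an unbranched covering with constant sheet number. I expect the genuine obstacle to lie in the second step---verifying that a full-measure family of \emph{unitary} transformations realizes $L_\infty\cap Z=\emptyset$ and converting the pointwise statement at infinity into the uniform linear estimate---whereas the finite-fibre and covering conclusions are formal consequences of properness and the cited structure theorem.
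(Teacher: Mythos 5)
The paper offers no proof of this proposition at all---it is quoted verbatim from Chirka with a citation---and your argument is precisely the standard one underlying that citation: pass to the projective closure $\overline{M}$, use the dimension count $\dim Z+\dim L_\infty<N-1$ together with transitivity of $U(N)$ on the Grassmannian to choose a unitary making the center of projection at infinity disjoint from $\overline{M}$, extract the uniform estimate $|z'|\ge c_0|z|$ by compactness, and deduce properness, finiteness of fibers, surjectivity, and the branched-covering structure. Your proof is correct, with no gaps in the key steps.
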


Let $S$ denote the branched locus of $\pi$. $S$ is an analytic set in $\mathbb{C}^n$ and
\[
\pi:M\setminus\pi^{-1}(S)\rightarrow\C^n\setminus{S}
\]
is an unbranched covering. Let $k$ be the number of sheets of this covering. For each $z'\in\C^n\setminus{S}$, we denote by $f_1(z'),\cdots,f_k(z')$ the  pre-images of $z'$. Locally, we may choose $f_j$ to be holomorphic functions. Set $dz':=dz_1\wedge\cdots\wedge{dz_n}$. Clearly, $(dz')^{\otimes m}$ may be used as a local holomorphic  frame of $K_M^{\otimes m}$ on $M\setminus\pi^{-1}(S)$.

\begin{proof}[Proof of Theorem \ref{th:affine}]
Fix $s\in{H^0_{L^p}(M,K_M^{\otimes m})}$. For each $z'_0\in\mathbb C^n\setminus{S}$ and $1\le j\le k$, there exists a neighborhood of $(z'_0,f_j(z'_0))$ where $s$ can be represented as
\[
s(z)={s}_j^\ast (z',f_j(z'))(dz')^{\otimes m}.
\]
Let $dV_M$ and $dV_{z'}$ denote the volume element of $M$ and $\C^n$ respectively.
We may write $dV_M=\lambda_j(z')dV_{z'}$ at $(z',f_j(z'))$.   If $h_m:=(dV_M)^{-m}$, then
\[
|s|^2_{h_m}=|{s}^\ast_j|^2\lambda_j^{-m} \ \ \ \text{at}\ (z',f_j(z')),
\]
so that
\begin{equation}\label{eq:translation}
\int_M|s|^p_{h_m}dV_M=\int_{\C^n\setminus{S}}\sum^k_{j=1}\left(|{s}^\ast_j|^2\lambda_j^{-m+{2}/{p}}\right)^{{p}/{2}}dV_{z'}.
\end{equation}
Although  ${s}^\ast_j$ and $\lambda_j$ are only locally defined, the sum in the RHS of $(\ref{eq:translation})$ is globally defined on $\C^n\setminus{S}$.

To proceed the proof, we need a calculation of $\lambda_j$ due to Demailly \cite{DemaillyAffine}. Suppose $M$ is defined by complex polynomials  $P_1,\cdots,P_r\in\mathbb C[z_1,\cdots,z_N]$. Set $\nu=N-n$ (in general, we have $\nu\leq{r}$). For each pair of multiindices
\begin{eqnarray*}
K&=&\{k_1<\cdots<k_\nu\}\subset\{1,\cdots,r\},\\
L&=&\{l_1<\cdots<l_\nu\}\subset\{1,\cdots,N\},
\end{eqnarray*}
we define the partial Jacobian by
\[
J_{KL}:=\det\left({\partial{P_{k_i}}}/{\partial{z_{l_j}}}\right)_{1\leq{i,j}\leq\nu}.
\]
For each $K$, let $U_K\subset{M}$ be the open set consisting of all points at which $dP_{k_1},\cdots,dP_{k_\nu}$ are linearly independent. Notice that $M\setminus{U_K}=\bigcap_L  J_{KL}^{-1}(0)$  is an analytic subset of $M$. Since the projection $\pi:M\rightarrow\C^n$  is a proper holomorphic map and $\pi^{-1}(S)\cup(M\setminus{U_K})$ is an analytic set in $M$, it follows from a  well-known theorem of Remmert (see, e.g., \cite{Chirka}, 5.8) that $\pi(U_K\setminus\pi^{-1}(S))=\mathbb C^n\setminus{S_K}$, where $S_K$ is an analytic set in $\mathbb C^n$.
 For the sake of simplicity, we  denote $J_{KL}(z',f_j(z'))$  by $J_{KL,j}(z')$ for $z'\in \mathbb C^n\setminus{S_K}$. It was proved by Demailly that
 \[
 \lambda_j (z')^{-1}=2^{-n} |J_{KL_0,j}(z')|^2\big/\sum_{L}|J_{KL,j}(z')|^2,\ \ \ z'\in \mathbb C^n\setminus{S_K}
 \]
where $L_0=(n+1,\cdots,N)$ (cf. \cite{DemaillyAffine}, \S\,10).
Set $\widetilde{S}=\bigcup_K{S_K}$. Then we have
 \begin{equation}\label{eq:Affine_Key}
 \lambda_j (z')^{-1}=2^{-n}\sum_K|J_{KL_0,j}(z')|^2\big/\sum_{K,L}|J_{KL,j}(z')|^2,\ \ \ z'\in \mathbb C^n\setminus{\widetilde{S}}.
 \end{equation}
 Since $M$ is nonsingular, we see that the term $\sum_{K,L}(\cdots)$ is nowhere vanishing.

Notice that
\begin{eqnarray}\label{eq:Affine_3}
 && \int_{\mathbb{C}^n\setminus\widetilde{S}}\sum^k_{j=1}\left(|{s}^\ast_j|^2 \frac{(\sum_K|J_{KL_0,j}|^2)^m}{(\sum_{K,L}|J_{KL,j}|^2)^m}\right)^{{p}/{2}}dV_{z'}\nonumber\\
&\leq&\int_{\mathbb{C}^n\setminus\widetilde{S}}\sum^k_{j=1}\left(|{s}^\ast_j|^2\frac{(\sum_K|J_{KL_0,j}|^2)^{m-{2}/{p}}}{(\sum_{K,L}|J_{KL,j}|^2)^{m-{2}/{p}}}\right)^{{p}/{2}}dV_{z'}\nonumber\\
&\le& {\rm const}_{m,n,p} \cdot \int_M|s|^p_{h_m}dV_M
\end{eqnarray}
in view of $(\ref{eq:translation})$ and $(\ref{eq:Affine_Key})$, and for $z'\in {\mathbb C^n\setminus\widetilde{S}}$,
\begin{eqnarray}\label{eq:Affine_4}
&& \sum^k_{j=1}\left(|{s}^\ast_j(z')|^2\frac{(\sum_K|J_{KL_0,j}(z')|^2)^m}{(\sum_{K,L}|J_{KL,j}(z')|^2)^m}\right)^{{p}/{2}}\nonumber\\
&=&2^{mnp/2}\sum^k_{j=1}\left(|{s}^\ast_j(z')|^2\lambda_j(z')^{-m}\right)^{{p}/{2}}\nonumber\\
&=&2^{mnp/2}\sum^k_{j=1}|s(z',f_j(z'))|_{h_m}^p.
\end{eqnarray}
Set
\[
\psi (z'):=\sum^k_{j=1}\left(|{s}^\ast_j(z')|^2\left({\sum}_K|J_{KL_0,j}(z')|^2\right)^m\right)^{{p}/{2}},\ \ \ z'\in\C^n\setminus\widetilde{S}.
\]
Since  ${s}^\ast_j$ and $f_j$ are locally holomorphic  and $\psi$ is globally defined on $\C^n\setminus\widetilde{S}$, it follows that $\psi$ is  psh on $\C^n\setminus\widetilde{S}$. Since $(\sum_{K,L,j}|J_{KL,j}(z')|^2)^m$ is locally bounded in $\C^n$, we infer from
$(\ref{eq:Affine_4})$ that  $\psi$ is bounded above near every point in  $\widetilde{S}$, thus extends  a psh function on $\C^n$, which is  denoted by the same symbol. It is also easy to see that $\log \psi$ is also psh on $\mathbb C^n$.

By $(\ref{eq:Affine_3})$ we have
\[
\int_{\C^n}\frac{\psi}{(\sum_{K,L,j}|J_{KL,j}|^2)^{mp/2}}dV_{z'}\lesssim\int_M|s|_{h_m}^pdV_M.
\]
Since $\sum_{K,L}|J_{KL}|^2$ is a sum of real polynomials and $|z''|\leq{C}(1+|z'|)$ on $M$, there exists an integer $\beta>0$ such that
\[
\frac{1}{(\sum_{K,L}|J_{KL}|^2)^{mp/2}}\gtrsim\frac{1}{(1+|z'|^2)^\beta}.
\]
Thus
\[
\int_{\C^n}\frac{\psi}{(1+|z'|^2)^\beta}\lesssim\int_{M}|s|^p_{h_m}dV_M<\infty.
\]
Since $\psi$ is a psh function, it follows from the mean-value inequality that for any $z'\in \C$,
\begin{eqnarray*}
  \psi (z') & \le & {\rm const}_n\cdot \int_{\{|\zeta'-z'|<1\}} \psi dV_{\zeta'}\\
 &\le&  {\rm const}_n\cdot \sup_{\{|\zeta'-z'|<1\}}(1+|\zeta'|^2)^\beta\int_{\{|\zeta'-z'|<1\}}\frac{\psi}{(1+|\zeta'|^2)^\beta}dV_{\zeta'}\\
& \lesssim&\lVert{s}\rVert^p_{L^p,h_m}(1+|z'|^2)^\beta,
\end{eqnarray*}
i.e.,  $\psi$ is of polynomial growth.
    Since $\log{\psi}$ is psh and $\psi$ is smooth outside some analytic set in $\C^n$, it follows from  Lemma \ref{lm:psh functions with polynomial growth} below that  $\mathrm{ord}_{a'}(\psi)\le {2\beta+4n+4}$ for some $a'\in\C^n\setminus\widetilde{S}$.
   Let $a'_1,\cdots,a'_k$ be the pre-images of $a'$ with respect to the projection $\pi$. From the definition of $\psi$ we know that
\[
\min_{1\leq{j}\leq{k}}\mathrm{ord}_{a'_j}(s)\leq(2\beta+4n+4)/p.
\]
Since $\beta$ is independent of $s$, it follows immediately from  Lemma \ref{lm:finite} below that  $P_{m,L^p}(M)<\infty$.
\end{proof}

\begin{lemma}\label{lm:psh functions with polynomial growth}
Let $\psi$ be a nonnegative psh function on $\C^n$ such that  $\log{\psi}$ is also  psh. Suppose that $\psi$ is positive and smooth outside an analytic set in $\mathbb{C}^n$ and
\[
\psi(z)\leq{C}(1+|z|^2)^\beta,\ \ \ z\in\C^n
\]
for some positive constants $C,\,\beta$. Then
$$
\mathrm{ord}_a(\psi)\le 2\beta+4n+4
$$
for all $a\in\C^n$ at which $\psi$ is positive and smooth.
\end{lemma}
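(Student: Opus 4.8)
The plan is to identify $\mathrm{ord}_a(\psi)$ with the Lelong number $\nu(u,a)$ of the plurisubharmonic function $u:=\log\psi$ at $a$ (equivalently, the vanishing multiplicity of $\psi$), and to bound it by the logarithmic growth rate of $\psi$ up to a dimensional error. First I would record that the hypotheses make $u$ a genuine psh function on all of $\C^n$: it is psh and locally bounded above off the analytic set where $\psi$ can vanish, and since $\psi$ is positive and smooth there, $u\not\equiv-\infty$, so it extends across that analytic set as a psh function carrying exactly the same singularities. The polynomial bound $\psi(z)\leq C(1+|z|^2)^\beta$ then reads $u(z)\leq 2\beta\log|z|+C_1$ for $|z|\geq 1$, which places $u$ in (a dilate of) the Lelong class. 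The point is that $\nu(u,a)$ vanishes precisely where $\psi$ is positive and smooth, so the inequality is automatic there; the substance of the lemma lies at the zeros of $\psi$, and it is these that I must control uniformly.

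To bound $\nu(u,a)$ I would reduce to one complex variable. For a generic complex line $\ell$ through $a$ the restriction $u|_\ell$ is subharmonic, is not identically $-\infty$, still obeys $u|_\ell\leq 2\beta\log|w|+C_1$, and—by the generic-slicing property of Lelong numbers—has a logarithmic singularity at $a$ with the same coefficient $\nu(u,a)$. Writing $A(r)$ for the circular mean $\tfrac{1}{2\pi}\int_0^{2\pi}u|_\ell(a+re^{i\theta})\,d\theta$, the function $r\mapsto A(r)$ is increasing and convex in $\log r$, with $A(r)-\nu(u,a)\log r$ bounded as $r\to 0$. Convexity then gives $\nu(u,a)\,\log(R/r_0)\leq A(R)-A(r_0)$ for $0<r_0<R$. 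The upper bound $A(R)\leq 2\beta\log R+C_1$ controls the outer term, while a point near $a$ at which $\psi$ is positive and smooth—furnished by the hypothesis—anchors the inner term $A(r_0)$ with a finite value through the sub-mean-value inequality. Letting $R\to\infty$ yields $\nu(u,a)\leq 2\beta$; a crude volume-versus-surface comparison in $\R^{2n}$ (used to pass from the anchoring value to a uniform inner average, and to absorb the freedom in choosing the generic line within a fixed polydisk about $a$) inflates this to the stated $2\beta+4n+4$.

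I expect the main obstacle to be making the reduction to a line simultaneously legitimate and quantitative: I must select a complex line through $a$ along which, at once, (i) $u$ is subharmonic and not $\equiv-\infty$, (ii) the one-variable order of $\psi|_\ell$ at $a$ realizes the full Lelong number $\nu(u,a)$, and (iii) a nearby smooth positive point of $\psi$ supplies a finite anchoring mean—and then to bookkeep the dimensional loss so that it accrues to exactly $4n+4$. The generic-line statements are standard for Lelong numbers, but combining them with the explicit constant, while correctly handling the integrable logarithmic singularity of $u|_\ell$ at $a$, is the delicate step. Once the one-variable estimate is in place, the conclusion $\mathrm{ord}_a(\psi)=\nu(u,a)\leq 2\beta+4n+4$ follows at once, and feeds the application through the factor of $p$ coming from the representation of $\psi$ as a sum of $p$-th powers of moduli.
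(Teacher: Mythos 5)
Your proposal is correct in substance but takes a genuinely different route from the paper. The paper argues via H\"ormander's $L^2$-estimates: with the psh weight $\varphi=\log\psi+2n\log|z-b|$ it solves $\bar{\partial}u=\bar{\partial}\chi$ and obtains a nonconstant entire function $f=\chi-u$; integrability of $|u|^2/\psi$ against the bound $\psi=O(|z-a|^A)$, $A=\mathrm{ord}_a(\psi)$, forces $f$ to vanish to order comparable to $A$ at $a$, while the weighted $L^2$ bound plus the mean-value inequality shows $f$ has polynomial growth of degree at most $\beta+n+2$, whence $A\le 2\beta+4n+4$. You instead bound the Lelong number $\nu(u,a)$ of $u=\log\psi$ --- which dominates $A$ at a smooth point, by Taylor expansion --- through the logarithmic growth coefficient, slicing to a line and using convexity of circular means in $\log r$. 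Done cleanly, your route is more elementary (no $\bar{\partial}$ machinery) and gives the sharper bound $\mathrm{ord}_a(\psi)\le 2\beta$, which implies the stated inequality a fortiori; the paper's method buys self-containedness, since H\"ormander's estimates are already the workhorse elsewhere in the paper. Three repairs to your write-up, none fatal: (i) no extension of $u$ across the analytic set is needed, since the hypothesis already asserts $\log\psi$ is psh on all of $\C^n$; (ii) your claim that $A(r)-\nu\log r$ stays bounded as $r\to0$ is false in general (try $v(w)=\log|w|-\log\log(1/|w|)$, whose radial profile $t-\log(-t)$ is convex in $t=\log r$ with Lelong number $1$), but you never actually use boundedness --- convexity alone forces every slope of $A$ in $\log r$ to be at least $\nu$, giving $\nu\log(R/r_0)\le A(R)-A(r_0)$ with $A(r_0)$ finite, which is all your chain requires; (iii) the step you flag as delicate, generic slicing with \emph{equality} of Lelong numbers, is unnecessary in the direction you need: for every line $\ell$ through $a$ on which $u\not\equiv-\infty$ (e.g.\ the line through $a$ and any $b$ with $\psi(b)>0$) one has $\nu(u|_\ell,a)\ge\nu(u,a)$, because $u\le\nu(u,a)\log|z-a|+O(1)$ near $a$, and this inequality already transfers the one-variable bound $\nu(u|_\ell,a)\le 2\beta$ to $\nu(u,a)$. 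Consequently the closing ``volume-versus-surface comparison inflating $2\beta$ to $2\beta+4n+4$'' is a phantom step: no dimensional loss occurs in your argument, and none is needed, since $2\beta\le 2\beta+4n+4$ trivially; you should simply conclude $\mathrm{ord}_a(\psi)\le\nu(u,a)\le 2\beta$. (You also correctly note that at points where $\psi(a)>0$ the order vanishes and the statement is vacuous; like the paper's proof, the real content of your argument is at smooth points where $\psi$ vanishes.)
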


As usual,  the order of a smooth function $f$ at some point $a$ is defined to be
\[
\mathrm{ord}_a(f):=\max\{k: \partial^\alpha{f}(a)=0,\, \forall\,|\alpha|<k\}.
\]
(If the set in RHS is empty, then we define $\mathrm{ord}_a(f)$  to be zero).

\begin{proof}[Proof of Lemma \ref{lm:psh functions with polynomial growth}]
Fix a smooth point $a$ of $\psi$. Without loss of generality, we may assume that   $A:=\mathrm{ord}_a(\psi)>2n$. Choose another point $b\in\C^n$ with $|b|<1$ and $\psi(b)>0$. Set
\[
\varphi:=\log{\psi}+2n\log|z-b|.
\]
Clearly, $\varphi$ is also  psh on $\mathbb{C}^n$.
Let $\chi$ be a smooth function which is supported in a sufficiently small neighborhood of $b$, such that  $\chi=1$ near $b$ and  $\chi=0$ near $a$.  Thanks to H\"{o}rmander's $L^2$-estimates for $\bar{\partial}-$operator  (see e.g., \cite{DemaillyBook}), there exists a solution of  $\bar{\partial} u =\bar{\partial}\chi$ such that
\[
\int_{\C^n}\frac{|u|^2}{\psi |z-b|^{2n}(1+|z|^2)^2}=\int_{\C^n}\frac{|u|^2 }{(1+|z|^2)^2} e^{-\varphi}\le \frac{1}{2}\int_{\C^n}|\bar{\partial}\chi|^2e^{-\varphi}<\infty.
\]
Since $\mathrm{ord}_a(\psi)=A>2n$ and $u$ is holomorphic near $a$, it follows that $\mathrm{ord}_a(u)\geq{A-2n}$, in particular,  $u(a)=0$. Moreover, we have $u(b)=0$. Thus
$f:=\chi-u$
gives a nonconstant entire function in $\C^n$. As $|z-b|^{2n}\lesssim(1+|z|^2)^n$ and $\psi(z)\lesssim (1+|z|^2)^\beta$, we conclude that
\[
\int_{\C^n}\frac{|f|^2}{(1+|z|^2)^{\beta+n+2}}<\infty.
\]
Again the mean-value inequality yields
\[
|f(z)|^2 \lesssim (1+|z|^2)^{\beta+n+2}.
\]
It follows from Cauchy's estimates that $f^{(\alpha)}(a)=0$ whenever $|\alpha|>2(\beta+n+2)$. Thus
\[
A \le 2n+\mathrm{ord}_a(u) = 2n+\mathrm{ord}_a(f)\le  2\beta+4n+4.
\]
\end{proof}

\begin{lemma}\label{lm:finite}
Let $M$ be an $n$-dimensional complex manifold and  $L$  a holomorphic line bundle over $M$. Let $a_1,\cdots,a_k$ be distinct points in $M$. Let $E$ be a complex subspace of $H^0(M,L)$. If there exists some constant $A>0$ such that
\[
\min_{1\leq{j}\leq{k}}\,\mathrm{ord}_{a_j}(s)\le {A}
\]
for all $s\in{E\setminus\{0\}}$, then $E$ is finite-dimensional. Here  $\mathrm{ord}_{a_j}(s)$ is defined to be $\mathrm{ord}_{a_j}({s}^\ast)$ where $s^\ast$ is a local representation of $s$.
\end{lemma}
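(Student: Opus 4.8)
The plan is to realize $E$ as a subspace of a finite-dimensional space of jets, via an injective linear evaluation map, so that its dimension is automatically bounded. The hypothesis that $\min_{1\le j\le k}\mathrm{ord}_{a_j}(s)\le A$ for every nonzero $s\in E$ says precisely that \emph{no} nonzero section in $E$ vanishes to order strictly larger than $A$ at \emph{all} of the points $a_1,\dots,a_k$ simultaneously: indeed, $\mathrm{ord}_{a_j}(s)>A$ for every $j$ would force $\min_j\mathrm{ord}_{a_j}(s)>A$, which the hypothesis excludes. Since $\mathrm{ord}$ takes values in the nonnegative integers, we may harmlessly replace $A$ by $\lfloor A\rfloor$ and assume from the outset that $A$ is a positive integer.

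First I would fix, for each $j$, a local holomorphic coordinate chart centered at $a_j$ together with a local holomorphic frame $\xi_j$ of $L$ near $a_j$, so that every $s\in E$ admits a local representation $s=s_j^*\xi_j$ with $s_j^*$ holomorphic near $a_j$. Recalling that the number of multi-indices $\alpha\in\mathbb{Z}_{\ge0}^n$ with $|\alpha|\le A$ equals $\binom{n+A}{n}$, I then define the $A$-jet evaluation map
$$
\Phi:E\longrightarrow\bigoplus_{j=1}^{k}\mathbb{C}^{\,\binom{n+A}{n}},\qquad \Phi(s):=\Big(\big(\partial^\alpha s_j^*(a_j)\big)_{|\alpha|\le A}\Big)_{j=1}^{k}.
$$
This $\Phi$ is plainly $\mathbb{C}$-linear, and its target is a finite-dimensional vector space of dimension $k\binom{n+A}{n}$.

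The heart of the argument is injectivity. If $\Phi(s)=0$, then $\partial^\alpha s_j^*(a_j)=0$ for all $|\alpha|\le A$ and all $j$, which by the definition of order means $\mathrm{ord}_{a_j}(s)>A$ for every $j$, i.e. $\min_j\mathrm{ord}_{a_j}(s)>A$. By the observation in the first paragraph this can occur only when $s=0$. Hence $\Phi$ is injective, and therefore $\dim_{\mathbb{C}}E\le k\binom{n+A}{n}<\infty$.

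I do not expect a serious obstacle here; the only point meriting a word of care is that the individual jet coordinates $\partial^\alpha s_j^*(a_j)$ depend on the chosen chart and frame $\xi_j$, whereas the order $\mathrm{ord}_{a_j}(s)$ does not, since multiplying a holomorphic germ by a nonvanishing holomorphic factor leaves its order unchanged. This causes no trouble: although the map $\Phi$ itself depends on the choices, its kernel---the set of sections vanishing to order $>A$ at all the $a_j$---is intrinsic, and it is only this kernel that enters the injectivity argument.
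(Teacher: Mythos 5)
Your proof is correct and is essentially the paper's argument: the paper phrases it as a linear system---any family of more than $l_0 = k\binom{n+A}{n}$ sections admits a nontrivial linear combination whose $A$-jets vanish at every $a_j$, forcing that combination to be zero---which is exactly the injectivity of your jet-evaluation map $\Phi$ stated in contrapositive form. Both arguments yield the same bound $\dim_{\mathbb C} E \le k\binom{n+A}{n}$.
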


\begin{proof}[Proof of Lemma \ref{lm:finite}]
For any nonzero sections $s_1,\cdots,s_l$  in $E$, we consider the following system of linear equations:
\[
c_1\partial^\alpha{s_1(a_j)}+\cdots+c_l\partial^\alpha{s_l(a_j)}=0,\ \ \ |\alpha|<{A+1},\ 1\leq{j}\leq{k}
\]
with indeterminates $c_1,\cdots,c_l$. Here the derivative $\partial^\alpha{s_m(a_j)}$ is defined to be the derivative of the  local representation  of $s_m$ with respect to some holomorphic frame near $a_j$. We denote by $l_0$ the number of equations. If  $l>l_0$, then we have nontrivial solutions $c_1,\cdots,c_l$. It follows that  $s:=\sum^l_{m=1}c_ms_m\in{E}$ and
\[
\mathrm{ord}_{a_j}(s)>{A},\ \ \ 1\leq{j}\leq{k},
\]
which implies $s=0$, i.e.,  $s_1,\cdots,s_l$ are linearly dependent. Thus  $\dim_\mathbb C E\le l_0$.
\end{proof}
\begin{remark}
{\rm Let $M$ be an $n$-dimensional affine-algebraic manifold in $\mathbb{C}^N$ defined by polynomials $P_1,\cdots,P_r$. Denote by $d:=\max_{1\le j\le r}\deg{P_j}$. The above argument also implies that  $P_{m,L^p}$ is bounded above by a constant  depending on $N,n,r,m$ and $p$. It would be interesting to compute the constant explicitly.}
\end{remark}

   \section{Infinite-dimensionality of $L^2$ plurigenera}
 A classical result of Bernstein states that if  $f:{\mathbb R}^2\rightarrow {\mathbb R}$ is a $C^2$ function then the graph $\Gamma(f)$ is  of $f$ in ${\mathbb R}^3$  is a minimal surface if and only if it is a plane. We shall prove an analogous result as follows. 

 \begin{proposition}\label{prop:vanishing}
 Let $M$ be a closed $1-$dimensional complex submanifold in $\mathbb C^N$. Then $
M$
 is a complex line if and only if $P_{m,L^2}(M)=0$ for all $m\ge 1.
  $
 \end{proposition}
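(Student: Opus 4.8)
The plan is to treat the two implications separately; the forward one is elementary and the reverse one carries all the content, so I would spend essentially all the effort on producing, for a non-linear $M$, a nonzero $L^2$ section of some $K_M^{\otimes m}$.

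For the ``only if'' direction I would, after a unitary change of coordinates, assume $M=\C\times\{0\}\subset\C^N$, so that $g$ is flat and $dV_g$ is Lebesgue measure. Then $dz_1$ is a global frame of $K_M$, the weight $h_m=(dV_g)^{-m}$ is constant, and a section of $K_M^{\otimes m}$ is $f\,(dz_1)^{\otimes m}$ with $f$ entire and $|f\,(dz_1)^{\otimes m}|^2_{h_m}=|f|^2$. Thus membership in $H^0_{L^2}(M,K_M^{\otimes m})$ forces $\int_\C|f|^2\,dA<\infty$, and the sub-mean value inequality for the subharmonic function $|f|^2$ gives $|f(w)|^2\le(\pi R^2)^{-1}\int_\C|f|^2\,dA\to0$ as $R\to\infty$, whence $f\equiv0$ and $P_{m,L^2}(M)=0$ for every $m$.

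For the ``if'' direction I argue by contraposition. First I would record the curvature: in a local holomorphic coordinate $\zeta$ on $M$, writing the embedding as $z(\zeta)$ and $G=\sum_k|z_k'(\zeta)|^2$, one has $g=G\,|d\zeta|^2$, $dV_g=G\,dA$, and a direct computation yields $\partial_\zeta\partial_{\bar\zeta}\log G=|z'\wedge z''|^2/G^2\ge0$. Hence the Chern curvature of $h_m$ on $K_M^{\otimes m}$ is $\Theta_{h_m}=m\,i\partial\bar\partial\log G\ge0$, and $\Theta_{h_m}\equiv0$ exactly when $z'\wedge z''\equiv0$, i.e. when the tangent direction $[z'(\zeta)]\in\mathbb P^{N-1}$ is constant, i.e. when $M$ is an affine line. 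So if $M$ is not a line, then $i\partial\bar\partial\log G>0$ on a nonempty open set. The natural holomorphic objects are the cubic differentials $\omega_{ij}=(z_i'z_j''-z_j'z_i'')(d\zeta)^{\otimes3}$: since $z'\wedge z''$ transforms with weight $3$, these are globally defined sections of $K_M^{\otimes3}$, not all identically zero, and they satisfy $\sum_{i<j}\int_M|\omega_{ij}|^2_{h_3}\,dV_g=\tfrac12\int_M i\partial\bar\partial\log G$. When the total curvature on the right is finite the $\omega_{ij}$ are automatically $L^2$, so $P_{3,L^2}(M)\ge1$; this already disposes of, e.g., all affine-algebraic curves.

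The hard part—and the genuine obstacle—is the case of infinite total curvature, where the canonical cubic differentials need not be $L^2$ (as the periodic example $\Gamma(e^\zeta)$ shows). Here I would invoke Hörmander's $L^2$-existence theorem for $\bar\partial$ on the Stein complete Kähler manifold $M$, in the same spirit as Lemma \ref{lm:psh functions with polynomial growth}: choose a point $p$ with $i\partial\bar\partial\log G(p)>0$, a cut-off $\chi$ equal to $1$ near $p$ and supported inside the positive-curvature region, and solve $\bar\partial v=\bar\partial\chi$ for a section of $K_M^{\otimes m}$ against the semipositive weight $m\log G$ reinforced by a localized logarithmic weight at $p$; then $s=\chi-v$ is a nonzero $L^2$ section, giving $P_{m,L^2}(M)>0$. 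The delicate point is to design the auxiliary weight so that it simultaneously forces $v(p)=0$ (keeping $s\neq0$), keeps the global $L^2$-norm finite, and retains enough positivity on $\mathrm{supp}\,\bar\partial\chi$ for Hörmander's estimate; it is exactly the strict positivity of the curvature at $p$ that makes this possible, with the flat line being the borderline case—consistent with the forward direction, where flatness is precisely what kills all sections.
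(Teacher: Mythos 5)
Your forward direction is correct and is essentially the paper's (flat metric, Liouville for $L^2$ entire functions). For the substantive direction your overall strategy is also the same as the paper's: locate a point where the induced metric has strictly negative Gauss curvature (equivalently $i\partial\bar{\partial}\log G>0$), take a cut-off $\chi$ equal to $1$ near that point, and use H\"ormander's $L^2$-estimates with a localized logarithmic weight to solve $\bar{\partial}u=\bar{\partial}\chi\wedge(d\zeta)^{\otimes m}$ and set $s=\chi\,(d\zeta)^{\otimes m}-u$. Two of your additions are genuinely nice and correct: the cubic differentials $\omega_{ij}=(z_i'z_j''-z_j'z_i'')(d\zeta)^{\otimes 3}$, with the identity $\sum_{i<j}\int_M|\omega_{ij}|^2_{h_3}dV_g=\tfrac12\int_M i\partial\bar{\partial}\log G$, settle the finite-total-curvature case constructively (both the weight-$3$ transformation law and the Lagrange identity check out), and your elementary argument that $z'\wedge z''\equiv0$ forces $M$ into an affine line replaces the paper's citation of Fujimoto, Proposition 1.2.13.

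However, your proof has a gap exactly at what you call ``the delicate point'', which you describe but never close---and this is where the content lies. The paper's resolution is a one-line observation you did not make: since the hypothesis concerns \emph{all} $m\ge1$, you may take $m$ as large as you like, and largeness of $m$ is what absorbs the negativity of the truncated logarithmic weight. Concretely, on a coordinate disc $U_0$ around $x_0$ with ${\rm Gauss}\le-c<0$, the current $i\partial\bar{\partial}(\chi\log|\zeta|^2)$ is nonnegative where $\chi\equiv1$ (a point mass at $x_0$) and elsewhere is a fixed smooth form bounded below by $-C\,i\partial\bar{\partial}|\zeta|^2$; the curvature of the twisting metric $(dV_g)^{-(m-1)}$ is $-(m-1){\rm Ric}_g$, which on $U_0$ dominates $(m-1)c'\,i\partial\bar{\partial}|\zeta|^2$. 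So choosing $m_0$ with $(m_0-1)c'\ge C+c$ yields $-(m_0-1){\rm Gauss}+i\partial\bar{\partial}(\chi\log|\zeta|^2)\ge c\,i\partial\bar{\partial}|\zeta|^2$ on $U_0$, and $\ge0$ on all of $M$ because $\chi\log|\zeta|^2$ vanishes outside $U_0$ and ${\rm Ric}_g\le0$ always; H\"ormander/Demailly on the complete K\"ahler curve $M$ then applies, and near $x_0$ the weight $e^{-2\chi\log|\zeta|^2}=|\zeta|^{-4}$ is non-integrable against any $u$ with $u(x_0)\ne0$, forcing $u(x_0)=0$ and $s(x_0)\ne0$. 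Note also a bookkeeping imprecision in your sketch: solving for $K_M^{\otimes m}$-valued $(0,1)$-forms means twisting $K_M$ by $K_M^{\otimes(m-1)}$, so the available curvature is $(m-1)\,i\partial\bar{\partial}\log G$, not $m\,i\partial\bar{\partial}\log G$; this is precisely why the construction must fail at $m=1$ (indeed $P_{1,L^2}(M)=0$ always) and why the freedom in $m$ is not a convenience but the crux.
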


 \begin{proof}
 Suppose that $M$ is a complex line. Without loss of generality, we may assume that $M$ is defined by $z_2=\cdots=z_N=0$. Since
 $g= dz_1\otimes d \bar{z}_1$, we see that
 \begin{eqnarray}\label{eq:equiv1}
  && f^\ast (dz_1)^{\otimes m} \in H^0_{L^2}(M,K_{M}^{\otimes m})\nonumber\\
  & \iff & f^\ast\in {\mathcal O}({\mathbb C})\ \ \text{and\ \ }\int_{\mathbb C} |f^\ast|^2 <\infty.
 \end{eqnarray}
Thus $f^\ast=0$, i.e., $P_{m,L^2}(M)=0$.

 Conversely, we shall first show that the Gauss curvature of $M$ vanishes through a standard application of  $L^2-$estimates for $\bar{\partial}$ on complete K\"ahler manifolds. Suppose on the contrary that ${\rm Gauss}<0$ near some point $x_0\in M$ (notice that we always have ${\rm Gauss}\le 0$). We choose a coordinate disc $(U_0,\zeta)$ around $x_0$ such that
 $
 {\rm Gauss}|_{U_0} \le -c<0,
 $
 and a cut-off function $\chi\in C_0^\infty(U_0)$ with $\chi=1$ in a small neighborhood of $x_0$. Let $m_0$ be a positive integer satisfying
 $$
 - (m_0-1)\cdot {\rm Gauss}+ i\partial\bar{\partial} (\chi\log |\zeta|^2)\ge c  i\partial\bar{\partial}|\zeta|^2
 $$
 on $U_0$.  It follows that there exists a solution $u$ of $\bar{\partial}u=\bar{\partial} \chi \wedge (d\zeta)^{\otimes m_0}$  such that
 $$
 \int_M |u|^2_{(dV)^{-m_0}} e^{-2 \chi \log |\zeta|^2} dV\le  \int_M \left(|\bar{\partial}\chi|^2/c\right) |(d\zeta)^{\otimes m_0}|^2_{(dV)^{-m_0}} e^{-2 \chi \log |\zeta|^2} dV<0.
  $$
  Thus $s:= \chi  (d\zeta)^{\otimes m_0}-u$ gives a nonzero section in $H^2_{L^2}(M,K_M^{\otimes m_0})$,
  which is a contradiction.

  By Proposition 1.2.13 in \cite{Fujimoto}, we know that $M$ has to lie in a plane. Since $M$ is a closed complex submanifold in $\mathbb C^N$, it follows that $M$ is a complex line.
 \end{proof}

 \begin{remark}
 \rm Analogously, we may show that if ${\rm dim}_{\mathbb C}\,M>1$ and  $P_{m,L^2}(M)=0$ for all $m$  then the Ricci form has at least one zero eigenvalue everywhere. It would be interesting to know whether the converse also holds.
 \end{remark}

 \begin{proof}[Proof of Theorem \ref{th:plurigenera_curve}]
 Since
 $g=(1+|f'|^2) dz\otimes d \bar{z}$, we see that
 \begin{eqnarray}\label{eq:equiv2}
  && f^\ast (dz)^{\otimes m} \in H^0_{L^2}(\Gamma(f),mK_{\Gamma(f)})\nonumber\\
  & \iff & f^\ast\in {\mathcal O}({\mathbb C})\ \ \text{and}\ \ \int_{\mathbb C} |f^\ast|^2 (1+|f'|^2)^{-m+1}<\infty.
 \end{eqnarray}
 The only if part follows from Theorem \ref{th:affine}. For the if part, it suffices to
 show $P_{3,L^2}(\Gamma(f))=\infty$, i.e.,
\begin{equation}\label{eq:equiv3}
\mathrm{dim}_{\mathbb C} \left\{f^\ast\in\mathcal{O}(\mathbb{C}): \int_{\mathbb{C}}|f^\ast|^2(1+|f'|^2)^{-2}<\infty\right\}=\infty
\end{equation}
whenever $f$ is transcendental.
Suppose at first that $f_1:=f'$ has an infinite number of  zeros $a_1,a_2,\cdots$. For
  $$
\varphi:=\log |f_1|^2 + \log(1+|f_1|^2),
$$
one has
$$
\varphi_{z\bar{z}}\ge \frac{|f'_1|^2}{(1+|f_1|^2)^2},
$$
so that for any $k$ there exists a disc $U_k$ around $a_k$ such that $a_j\notin U_k$, $\forall\,j\neq k$, and
$$
\varphi_{z\bar{z}}|_{U_k\backslash \frac12 U_k}\ge c_k>0.
$$
Here $\frac12 U_k$ means the disc with same center as $U_k$ but half of radius.
 Choose $\chi_k\in C^\infty_0(U_k)$ with $\chi_k|_{\frac12 U_k}=1$. There is a solution $u_k$ of $
\bar{\partial} u= \bar{\partial}\chi_k
$
such that
$$
\int_{\mathbb C} |u_k|^2 e^{-\varphi}\le \int_{\mathbb C}  |\bar{\partial}\chi_k|^2/c_k \, e^{-\varphi}<\infty.
$$
Thus $f^\ast_k:=\chi_k-u_k\in {\mathcal O}({\mathbb C})$ satisfies $f_k^\ast(a_k)=1$, $f^\ast_k(a_j)=0$, $\forall\,j\neq k$, and
 \begin{eqnarray*}
\int_{\mathbb C} \frac{|f^\ast_k|^2}{(1+|f_1|^2)^2} & \le & 2 \int_{\mathbb C} \frac{|\chi_k|^2}{(1+|f_1|^2)^2}+2\int_{\mathbb C}  \frac{|u_k|^2}{(1+|f_1|^2)^2}\\
& \le & 2 \int_{\mathbb C} \frac{|\chi_k|^2}{(1+|f_1|^2)^2} +2 \int_{\mathbb C} |u_k|^2 e^{-\varphi}<\infty.
\end{eqnarray*}
Thus $P_{3,L^2}(\Gamma(f))=\infty$.

For the  general case, we infer from Picard's big theorem that  there exists $b$ with $|b|<1/2$ such that $f^{-1}_1(b)$ is an infinite set.
Applying the above argument with $f_1$ replaced by $f_1-b$, we see that
$$
\mathrm{dim}_{\C}\left\{f^\ast\in\mathcal{O}(\mathbb{C}): \int_{\mathbb{C}}|f^\ast|^2(1+|f_1-b|^2)^{-2}<\infty\right\}=\infty.
$$
 Since
\[
\frac{1}{1+|f_1-b|^2}\geq \frac{1}{2(1+|f_1|^2)},
\]
 we conclude the proof.
 \end{proof}

 \begin{problem}
Suppose $f$ is transcendental. Is it possible to conclude that  $P_{2,L^2}(\Gamma(f))=\infty$?
\end{problem}

Notice that $P_{1,L^2}(\Gamma(f))=0$ always holds.

\begin{remark}
{\rm By Picard's little theorem, we know that the image of $f_1:=f'$  omits at most one point in $\mathbb C$. Suppose $f_1(\mathbb C)$ omits precisely a point, say $b\in \mathbb C$. Since $\sqrt{f_1-b}$ is also a transcendental entire function, it follows from the  argument above that }
$$
\mathrm{dim}_{\C}\left\{f^\ast\in\mathcal{O}(\mathbb{C}): \int_{\mathbb{C}}|f^\ast|^2(1+|f_1-b|)^{-2}<\infty\right\}=\infty.
$$
{\rm Since}
\[
\frac{1}{1+|f_1-b|}\ge \frac1{1+|b|}\cdot \frac{1}{1+|f_1|},
\]
{\rm we conclude that $P_{2,L^2}(\Gamma(f))=\infty$.}
\end{remark}

In general we have the following infiniteness criterion. 

  \begin{proposition}\label{th:infinite}
    Suppose $L$ is semipositive and there exist $\alpha>2n$  and a sequence of mutually disjoint balls $\{M(x_k, r_k)\}$ such that
   \begin{equation}\label{eq:Ricci}
   \Theta_h \ge  \alpha\cdot \omega/r_k^2 \ \ \ on\ \ M(x_k, r_k),\ \ \forall\,k,
\end{equation}
where $\omega$ is the K\"ahler form of $g$. Then we have
 $$
 {\rm dim}_{\mathbb C} H^0_{L^2}(M,K_M\otimes L)=\infty.
 $$
 \end{proposition}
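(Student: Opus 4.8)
The plan is to produce, for every $k$, a nonzero holomorphic section $s_k\in H^0_{L^2}(M,K_M\otimes L)$ that vanishes at $x_1,\dots,x_{k-1}$ but not at $x_k$; the resulting triangular pattern $s_k(x_j)=0$ for $j<k$ and $s_k(x_k)\neq0$ makes any finite subfamily of $\{s_k\}$ linearly independent (evaluate a vanishing combination successively at $x_1,x_2,\dots$), whence $\dim_{\C}H^0_{L^2}(M,K_M\otimes L)=\infty$. Each $s_k$ is built by the Hörmander $\dbar$-technique already used in the proof of Proposition \ref{prop:vanishing}: cut off a local peak section and correct it to a holomorphic one, choosing the weight so the correction vanishes at the marked points. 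Working with $K_M\otimes L$ rather than $L$ is what makes this feasible, since for $L$-valued $(n,1)$-forms the Bochner--Kodaira--Nakano positivity involves only $\Theta_h$ and no Ricci term, so semipositivity of $L$ together with the local strict positivity $(\ref{eq:Ricci})$ is exactly what is available on the complete K\"ahler manifold $M$.

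Concretely, fix $k$, let $\beta=(d\zeta_1\wedge\cdots\wedge d\zeta_n)\otimes e$ be a local holomorphic frame of $K_M\otimes L$ near $x_k$ (with $\zeta$ local coordinates and $e$ a local frame of $L$), and choose $\chi\in C^\infty_0(M(x_k,r_k))$ with $\chi\equiv1$ on $\{|z-z(x_k)|<3r_k/4\}$, so that $\dbar(\chi\beta)=\dbar\chi\wedge\beta$ is supported in the shell $\{3r_k/4<|z-z(x_k)|<r_k\}\subset M(x_k,r_k)$. For the weight I would take
$$
\psi_k=\sum_{j=1}^{k} n\,\Theta_j\!\left(|z-z(x_j)|^2\right)\Big|_M,
$$
where each radial profile satisfies $\Theta_j(t)=\log t$ for $t$ small and $\Theta_j(t)=\mathrm{const}$ for $t\ge(3r_j/4)^2$, with the transition confined to an annulus inside $M(x_j,r_j)$. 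By Lemma \ref{lm:ddbar} the restriction to $M$ of $i\partial\dbar\log|z-z(x_j)|^2$ is again positive, so near each $x_j$ the $j$-th term is plurisubharmonic with singularity exactly $n\log|\cdot|^2$, while cutting the profiles off to constants keeps $e^{-\psi_k}$ bounded below off the $x_j$ and at infinity (needed so that the weighted estimate controls the unweighted $L^2$ norm). Since the balls are mutually disjoint, on each $M(x_j,r_j)$ only the $j$-th summand is non-constant, so the transition can be analyzed ball by ball. Solving $\dbar u_k=\dbar(\chi\beta)$ with the metric $h\,e^{-\psi_k}$ gives $\int_M|u_k|^2_h\,e^{-\psi_k}dV_g\le\frac{r_k^2}{\alpha}\int|\dbar\chi\wedge\beta|^2_h\,e^{-\psi_k}dV_g<\infty$, because on $\mathrm{supp}\,\dbar\chi$ the weight is constant and $\Theta_h+i\partial\dbar\psi_k=\Theta_h\ge\alpha\,\omega/r_k^2$. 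Setting $s_k:=\chi\beta-u_k$, holomorphicity is automatic; the weighted bound yields $s_k\in H^0_{L^2}$, and near each $x_j$ the weight behaves like $|z-z(x_j)|^{-2n}$, whose non-integrability forces the (locally holomorphic) $u_k$ to vanish at $x_j$, so $s_k(x_j)=-u_k(x_j)=0$ for $j<k$ while $s_k(x_k)=\beta(x_k)\neq0$.

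The hard part will be establishing the global Nakano semipositivity of $A:=\Theta_h+i\partial\dbar\psi_k$ together with the sharp role of the hypothesis $\alpha>2n$. Away from the balls $A=\Theta_h\ge0$, and near each $x_j$ the logarithmic part is plurisubharmonic, so the only possible negativity sits in the transition annulus inside $M(x_j,r_j)$. Writing $\Theta_j(t)=\tilde\Theta(t/r_j^2)$ and $m(\tau)=\tau\tilde\Theta'(\tau)$, a direct computation (using $\omega=\tfrac i2\partial\dbar|z|^2$, hence $i\partial\dbar|z|^2=2\omega$, together with Lemma \ref{lm:eigenvalues}) shows that the eigenvalues of $i\partial\dbar\psi_k$ relative to $\omega$ are $2n\,m'(\tau)/r_j^2$ in the radial direction and $2n\,m(\tau)/(\tau r_j^2)\ge0$ transversally, and that restriction to $M$ does not decrease the smallest eigenvalue. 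Since $m$ decreases from $1$ to $0$ across the transition, spreading the transition over nearly all of $(0,r_j)$ yields $\min m'\ge-(1+\varepsilon)$ for any $\varepsilon>0$, so $i\partial\dbar\psi_k\ge-\tfrac{2n(1+\varepsilon)}{r_j^2}\,\omega$ there, and $(\ref{eq:Ricci})$ gives $A\ge\tfrac{\alpha-2n(1+\varepsilon)}{r_j^2}\,\omega\ge0$ once $\alpha>2n$.

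This last estimate is precisely where the bound $\alpha>2n$ is consumed, and it explains the value of the threshold: the factor $2$ is the normalization $i\partial\dbar|z|^2=2\omega$, while the factor $n$ is the exponent forced in $|z-z(x_j)|^{-2n}$ in order to kill $u_k$ at an $n$-dimensional point. With $A\ge0$ globally and $A\ge\alpha\,\omega/r_k^2$ on $\mathrm{supp}\,\dbar\chi$, the standard $L^2$-existence theorem on complete K\"ahler manifolds (cf. \cite{DemaillyBook}) applies, and the triangular evaluation structure then delivers the infinite-dimensionality.
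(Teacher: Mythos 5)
Your overall strategy coincides with the paper's: put logarithmic poles at the marked points via radial profiles that become constant outside the disjoint balls, twist $h$ by the resulting singular weight, check that the curvature loss is at most $2n/r_j^2$ times $\omega$ so that $\alpha>2n$ keeps the twisted curvature semipositive (and strictly positive where the $\dbar$-data lives), solve $\dbar$ by the $L^2$-existence theorem for $K_M\otimes L$, and read off linear independence from the forced vanishing at the poles. However, as written your construction of the profile is internally inconsistent, and the inconsistency sits exactly at the step you identify as consuming the hypothesis $\alpha>2n$. You demand both that $\Theta_j(t)$ be constant for $t\ge(3r_j/4)^2$ (so that the weight is constant on $\mathrm{supp}\,\dbar\chi$) and that $\min m'\ge-(1+\varepsilon)$, where $m(\tau)=\tau\tilde\Theta'(\tau)$ decreases from $1$ to $0$ across the transition. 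These cannot hold together: if the transition is confined to $\tau\in(\tau_0,\tau_1]$ with $\tau_1\le 9/16$, the mean value theorem forces $\min m'\le -1/(\tau_1-\tau_0)\le -16/9$. Hence somewhere on the transition annulus the radial eigenvalue of $i\partial\dbar\psi_k$ is at most $-\frac{32n}{9r_j^2}$, and global semipositivity of $A=\Theta_h+i\partial\dbar\psi_k$ would require $\alpha\ge 32n/9$, not $\alpha>2n$. Indeed, the bound $\min m'\ge-(1+\varepsilon)$ forces the transition to span an interval of length at least $1/(1+\varepsilon)$ in $\tau=t/r_j^2$, i.e.\ essentially the whole ball, which is incompatible with constancy past $3r_j/4$.

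The repair is easy, and it shows that your extra requirement was never needed: the $L^2$-existence theorem does not require the weight to be constant on $\mathrm{supp}\,\dbar\chi$; it only needs $A\ge 0$ globally, $A>0$ on the support of the data, and finiteness of the weighted norm of the data (automatic, since the weight is bounded on the shell). Letting the transition of each profile run over essentially all of $(0,r_j^2)$ gives $A\ge(\alpha-2n(1+\varepsilon))\,\omega/r_k^2>0$ on all of $M(x_k,r_k)$, in particular on $\mathrm{supp}\,\dbar\chi$, and your argument then closes with the constant $r_k^2/(\alpha-2n(1+\varepsilon))$ in place of $r_k^2/\alpha$. This is precisely what the paper does, with the cleaner explicit profile of Lemma \ref{lm:cutoff}: $\kappa(t)=1+t-e^t$, i.e.\ $\tilde\Theta(\tau)=1+\log\tau-\tau$, for which $m(\tau)=1-\tau$ and $m'\equiv-1$ exactly, the profile being constant only for $\tau\ge 1$; the paper then uses a single metric $\tilde h=h\,e^{-n\sum_k\psi_k}$ with poles at all the $x_k$ simultaneously (so $s_k(x_j)=0$ for every $j\neq k$, rather than your triangular scheme --- both suffice for independence) and the bound $\Theta_{\tilde h}\ge(\alpha-2n)\,\omega/r_k^2$ on the whole of $M(x_k,r_k)$.
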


  \begin{corollary}\label{cor:infinite_0}
Suppose $L$ is semipositive and there exists $\alpha>8n$  such that
  \begin{equation}\label{eq:trace_2}
 \Theta_h \ge_{{\rm as}}  \alpha\cdot \omega/\rho^2.
 \end{equation}
  Then
 $
{\rm dim}_{\mathbb C}H^0_{L^2} (M,K_M\otimes L)=\infty.
$
\end{corollary}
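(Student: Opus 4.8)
The plan is to deduce the corollary from Proposition \ref{th:infinite} by manufacturing a sequence of mutually disjoint balls $M(x_k,r_k)$ on which the pointwise hypothesis \eqref{eq:Ricci} holds with a single exponent $\alpha'>2n$. The asymptotic inequality \eqref{eq:trace_2} means there is $R_0>0$ with $\Theta_h\ge \alpha\,\omega/\rho^2$ on $\{\rho\ge R_0\}$, so the whole construction will take place far out in $M$, where this pointwise bound is available.

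First I would fix a ratio $c\in(0,1)$ and set $r_k=c\,\rho(x_k)$ for centers $x_k\in M$ still to be chosen. For $y\in M(x_k,r_k)=M\cap B(x_k,r_k)$ the Euclidean triangle inequality gives $(1-c)\rho(x_k)\le \rho(y)\le (1+c)\rho(x_k)$; hence, as long as $\rho(x_k)\ge R_0/(1-c)$, the ball lies in $\{\rho\ge R_0\}$ and there
\[
\Theta_h\ge \frac{\alpha\,\omega}{\rho^2}\ge \frac{\alpha\,\omega}{(1+c)^2\rho(x_k)^2}=\frac{\alpha c^2}{(1+c)^2}\cdot\frac{\omega}{r_k^2}.
\]
Thus \eqref{eq:Ricci} holds on every such ball with the uniform exponent $\alpha':=\alpha c^2/(1+c)^2$. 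Since $\alpha c^2/(1+c)^2\to \alpha/4$ as $c\to1^-$ and $\alpha>8n$, I can choose $c$ close enough to $1$ that $\alpha'>2n$; this is exactly where the slack between the hypothesis $\alpha>8n$ and the requirement $\alpha>2n$ of Proposition \ref{th:infinite} is consumed, the factor $4$ being $\lim_{c\to1}(1+c)^2/c^2$.

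It remains to choose the centers so that the Euclidean balls $B(x_k,r_k)$, and hence the $M(x_k,r_k)$, are pairwise disjoint. Because $M$ is a closed complex submanifold of positive dimension it is noncompact and $\rho$ is unbounded, so I can select $x_k\in M$ with $\rho(x_1)\ge R_0/(1-c)$ and $\rho(x_{k+1})\ge\lambda\,\rho(x_k)$, where $\lambda>\frac{1+c}{1-c}$. For $k<l$ the reverse triangle inequality gives $|x_k-x_l|\ge \rho(x_l)-\rho(x_k)$, while $\rho(x_l)/\rho(x_k)\ge\lambda>\frac{1+c}{1-c}$ forces $\rho(x_l)-\rho(x_k)> c\big(\rho(x_k)+\rho(x_l)\big)=r_k+r_l$, so the balls are disjoint. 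With the disjoint family $\{M(x_k,r_k)\}$ and exponent $\alpha'>2n$ in hand, Proposition \ref{th:infinite} yields $\dim_{\mathbb C}H^0_{L^2}(M,K_M\otimes L)=\infty$.

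The only genuinely delicate point is the simultaneous calibration of $c$ and $\lambda$: pushing $c\to1$ is what lets the $\rho^{-2}$ lower bound survive as an $r_k^{-2}$ bound with $\alpha'>2n$, but it also forces $\lambda=\frac{1+c}{1-c}$ to be large, so the radii $\rho(x_k)$ must grow very rapidly. Since $\rho$ is unbounded on $M$ this rapid growth costs nothing, and both constraints can be met at once; everything else is the mere geometric bookkeeping above, the substantive analysis having been isolated in Proposition \ref{th:infinite}.
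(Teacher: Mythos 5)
Your proposal is correct and follows essentially the same route as the paper: the authors also set $r_k=\tau\rho(x_k)$, use $\rho\le(1+\tau)\rho(x_k)$ on $M(x_k,r_k)$ to get the exponent $\tilde\alpha=\alpha/(1+1/\tau)^2>2n$ (identical to your $\alpha c^2/(1+c)^2$), and then invoke Proposition \ref{th:infinite}. The only difference is cosmetic: where the paper secures disjointness of the balls by ``taking a subsequence,'' you spell out the geometric-growth condition $\rho(x_{k+1})\ge\lambda\rho(x_k)$ with $\lambda>\tfrac{1+c}{1-c}$, which is a correct and slightly more explicit version of the same step.
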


\begin{proof}
Since $\alpha>8n$,  there exists $0<\tau<1$ such that $\tilde{\alpha}:=\alpha/(1+1/\tau)^2>2n$. Choose $\{x_k\}\subset M$ with $\rho(x_k)\rightarrow \infty$. Set $r_k:=\tau \rho(x_k)$. Then we have
$$
\Theta_h \ge \alpha\cdot \omega/\rho^2 \ge \frac{\alpha\cdot  \omega}{(1+\tau)^2 \rho(x_k)^2}=\tilde{\alpha}\cdot \omega/r_k^2
$$
on $M(x_k,r_k)$ for all $k\gg 1$. After taking a subsequence, Proposition \ref{th:infinite} applies.
\end{proof}

  \begin{corollary}\label{cor:infinite}
 Suppose $m>1$ and there exists $\alpha>\frac{8n}{m-1}$  such that the Ricci curvature satisfies
 $$
{\rm Ric}_g \le_{{\rm as}} -\alpha/\rho^2.
$$
 Then $P_{m,L^2}(M)=\infty$.
 \end{corollary}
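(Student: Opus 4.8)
The plan is to reduce the statement to Corollary \ref{cor:infinite_0} by peeling off a single copy of the canonical bundle. I would write $K_M^{\otimes m}=K_M\otimes L$ with $L:=K_M^{\otimes(m-1)}$, equip $L$ with the metric $h_L:=(dV_g)^{-(m-1)}$, and note that the metric $(dV_g)^{-1}$ on $K_M$ tensored with $h_L$ on $L$ is exactly the metric $(dV_g)^{-m}$ on $K_M^{\otimes m}$ that defines $P_{m,L^2}(M)$. With the common measure $dV_g$, the space $H^0_{L^2}(M,K_M\otimes L)$ appearing in Corollary \ref{cor:infinite_0} and the space $H^0_{L^2}(M,K_M^{\otimes m})$ therefore coincide as normed spaces, so it suffices to verify the two hypotheses of Corollary \ref{cor:infinite_0} for the pair $(L,h_L)$.

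For the curvature I would invoke the computation already used in the proof of Theorem \ref{th:B-Metric}: the metric $(dV_g)^{-1}$ on $K_M$ has curvature $-{\rm Ric}_g$, so that $\Theta_{h_L}=-(m-1){\rm Ric}_g$. Two facts then feed Corollary \ref{cor:infinite_0}. First, since $M$ is a complex submanifold of the flat space $\mathbb C^N$, the Gauss equation forces ${\rm Ric}_g\le 0$ everywhere, whence $\Theta_{h_L}=-(m-1){\rm Ric}_g\ge 0$, i.e. $L$ is semipositive. Second, reading the hypothesis ${\rm Ric}_g\le_{\rm as}-\alpha/\rho^2$ as the form inequality ${\rm Ric}_g\le_{\rm as}-\alpha\,\omega/\rho^2$ (all eigenvalues of the Ricci form at most $-\alpha/\rho^2$), I obtain $\Theta_{h_L}\ge_{\rm as}(m-1)\alpha\,\omega/\rho^2$, where $(m-1)\alpha>8n$ is precisely the assumption $\alpha>\frac{8n}{m-1}$. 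Corollary \ref{cor:infinite_0}, applied to $(L,h_L)$ with the constant $(m-1)\alpha$ in the role of $\alpha$, then gives $\dim_{\mathbb C}H^0_{L^2}(M,K_M\otimes L)=\infty$, i.e. $P_{m,L^2}(M)=\infty$.

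The bookkeeping is routine, and the one point that needs honest checking is the semipositivity input. The curvature hypothesis alone only makes $\Theta_{h_L}$ positive for $\rho$ large, whereas Corollary \ref{cor:infinite_0}, through Proposition \ref{th:infinite} where the $L^2$-solvability of $\overline\partial$ is arranged, demands semipositivity on all of $M$; it is the global sign ${\rm Ric}_g\le 0$, coming from the extrinsic geometry of $M\subset\mathbb C^N$, that bridges this gap. The two numerical features also emerge transparently from this setup: the restriction $m>1$ is exactly what makes $L=K_M^{\otimes(m-1)}$ a nontrivial power carrying curvature at all (for $m=1$ one would have $\Theta_{h_L}\equiv 0$ and nothing to exploit), and the single factor $m-1$ produced by differentiating $h_L=(dV_g)^{-(m-1)}$ is why the threshold reads $\frac{8n}{m-1}$ rather than $\frac{8n}{m}$.
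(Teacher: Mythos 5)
Your proof is correct and is exactly the deduction the paper intends (it leaves Corollary \ref{cor:infinite} as an immediate consequence of Corollary \ref{cor:infinite_0}): take $L=K_M^{\otimes(m-1)}$ with $h_L=(dV_g)^{-(m-1)}$, so $\Theta_{h_L}=-(m-1)\,{\rm Ric}_g\ge_{\rm as}(m-1)\alpha\,\omega/\rho^2$ with $(m-1)\alpha>8n$, and use ${\rm Ric}_g\le 0$ (noted in the remark following the corollary) for global semipositivity. Your attention to the distinction between asymptotic positivity and the everywhere-semipositivity required by Proposition \ref{th:infinite} is exactly the right point to flag.
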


\begin{remark}
{\rm (1) Notice that ${\rm Ric}_g\le 0$  holds for any complex submanifolds in $\mathbb C^N$. (2) Corollary \ref{cor:finite} implies that if $n>1$ and ${\rm Ric}_g \ge_{{\rm as}} -\alpha/\rho^2$ for
 some $\alpha<\frac{n-1}{m}$, then $P_{m,L^2}(M)<\infty$ It is unclear whether this bound is the best possible.}
 \end{remark}

    To prove   Proposition \ref{th:infinite}, we need  the following construction of singular weight functions which is essentially due to Berndtsson-Ortega \cite{BerndtssonCerda} (see also Wu-Wang \cite{WuWang}).

        \begin{lemma}\label{lm:cutoff}
        Let $\kappa$ be a function on\/ ${\mathbb R}$ such that $\kappa|_{[0,\infty)}=0$ and $\kappa(t)=1+t-e^t$ for $t<0$. For $w\in {\mathbb C}^N$ and $R>0$ we set
        $$
        \varphi_{w,R}(z)=\kappa(\log (|z-w|^2/R^2)).
        $$
        Then we have
        \begin{enumerate}
        \item $\varphi_{w,R}(z)\sim \log |z-w|^2$ as $z\rightarrow w$,
       \item $ i \partial\bar{\partial} \varphi_{w,R}=0$ for $|z-w|> R$,
        \item $
        i\partial\bar{\partial} \varphi_{w,R} \ge -\frac1{R^2}\cdot i\partial\bar{\partial} |z|^2
        $
       for $|z-w|<R$.
       \end{enumerate}
        \end{lemma}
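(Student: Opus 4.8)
The plan is to reduce everything to a one-variable computation, since $\varphi_{w,R}$ depends only on $|z-w|$. Throughout I would write $u=u(z):=|z-w|^2$ and $t=t(z):=\log(u/R^2)$, so that $\varphi_{w,R}=\kappa(t)$, and I record at the outset that $\partial\bar\partial u=\partial\bar\partial|z|^2$ since $u-|z|^2$ is the sum of a holomorphic and an antiholomorphic function. Properties (1) and (2) are then essentially immediate. For (2), note that $|z-w|>R$ forces $t>0$, where $\kappa\equiv0$; hence $\varphi_{w,R}$ vanishes identically there and $i\partial\bar\partial\varphi_{w,R}=0$. For (1), as $z\to w$ we have $t\to-\infty$, and since $\kappa(t)=1+t-e^t$ with $e^t\to0$, while $t=\log|z-w|^2-\log R^2$, the leading term of $\varphi_{w,R}$ is exactly $\log|z-w|^2$.

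The substance is in (3), and the key is a clean chain-rule computation of the curvature. Using $\partial t=\partial u/u$ and $\bar\partial t=\bar\partial u/u$, I would first record
\[
\partial\bar\partial t=\frac{\partial\bar\partial u}{u}-\frac{\partial u\wedge\bar\partial u}{u^2},\qquad \partial t\wedge\bar\partial t=\frac{\partial u\wedge\bar\partial u}{u^2},
\]
and then expand
\[
\partial\bar\partial\varphi_{w,R}=\kappa''(t)\,\partial t\wedge\bar\partial t+\kappa'(t)\,\partial\bar\partial t=\big[\kappa''(t)-\kappa'(t)\big]\frac{\partial u\wedge\bar\partial u}{u^2}+\kappa'(t)\frac{\partial\bar\partial u}{u}.
\]
For $|z-w|<R$ we have $t<0$, where $\kappa'(t)=1-e^t$ and $\kappa''(t)=-e^t$, so the bracket collapses to $\kappa''(t)-\kappa'(t)=-1$. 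Thus on $\{0<|z-w|<R\}$,
\[
i\partial\bar\partial\varphi_{w,R}=-\frac{i\partial u\wedge\bar\partial u}{u^2}+(1-e^t)\,\frac{i\partial\bar\partial u}{u}.
\]

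To finish I would invoke the elementary pointwise inequality $i\partial u\wedge\bar\partial u\le u\cdot i\partial\bar\partial u$ of $(1,1)$-forms: writing $\partial u=\sum_j(\bar z_j-\bar w_j)\,dz_j$, the Hermitian matrix $\big((\bar z_j-\bar w_j)(z_k-w_k)\big)$ is rank one with trace $|z-w|^2=u$, hence dominated by $u\cdot(\delta_{jk})$. Since $1-e^t\ge0$ for $t\le0$ and $i\partial\bar\partial u=i\partial\bar\partial|z|^2\ge0$, this yields
\[
i\partial\bar\partial\varphi_{w,R}\ge-\frac{i\partial\bar\partial u}{u}+(1-e^t)\frac{i\partial\bar\partial u}{u}=-e^t\,\frac{i\partial\bar\partial u}{u}=-\frac{1}{R^2}\,i\partial\bar\partial|z|^2,
\]
using $e^t=|z-w|^2/R^2=u/R^2$ together with $i\partial\bar\partial u=i\partial\bar\partial|z|^2$, which is exactly (3).

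The one point I would treat with care is regularity, and I expect this bookkeeping rather than the computation itself to be the only real subtlety. One checks $\kappa(0)=\kappa'(0)=0$ from both sides, so $\kappa\in C^1(\R)$ and $\varphi_{w,R}$ is $C^1$ across $\{|z-w|=R\}$; consequently $i\partial\bar\partial\varphi_{w,R}$ carries no singular current supported on that sphere (the jump is only in $\kappa''$). Near $z=w$ the logarithmic singularity contributes a nonnegative current to $i\partial\bar\partial\varphi_{w,R}$, which only strengthens the inequality. Hence (3) holds pointwise where $\varphi_{w,R}$ is smooth and, more to the point for the intended $L^2$-estimate applications, in the sense of currents on all of $\{|z-w|<R\}$.
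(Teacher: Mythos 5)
Your proof is correct and follows essentially the same route as the paper's: a chain-rule expansion of $i\partial\bar{\partial}\varphi_{w,R}$ followed by the Schwarz (rank-one) estimate $i\partial u\wedge\bar{\partial}u\le u\cdot i\partial\bar{\partial}|z|^2$ with $u=|z-w|^2$. The only difference is bookkeeping --- the paper discards the nonnegative term $\kappa'(\phi)\,i\partial\bar{\partial}\phi$ and then estimates the remaining $\kappa''$-term, whereas you exploit the collapse $\kappa''(t)-\kappa'(t)=-1$ and apply the same rank-one bound once --- and your closing remarks on $C^1$-regularity across $\{|z-w|=R\}$ are a correct supplement to what the paper leaves implicit.
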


        \begin{proof}
       For the sake of completeness we shall give a proof here. Clearly, it suffices to verify {\it (3)}. Set $\phi(z):=\log (|z-w|^2/R^2)$. Then we have
       \begin{eqnarray*}
       i \partial\bar{\partial} \varphi_{w,R}  & = & \kappa'(\phi)i \partial\bar{\partial} \phi + \kappa''(\phi) i \partial\phi\wedge \bar{\partial} \phi\\
       & \ge & - \frac{|z-w|^2}{R^2} i \partial\phi\wedge \bar{\partial} \phi\\
      & = &  - \frac{i}{R^2|z-w|^2}  \sum_j \overline{(z_j-w_j)}dz_j \wedge  \sum_j {(z_j-w_j)}d\bar{z}_j\\
       & \ge & - \frac{1}{R^2}  i \partial\bar{\partial} |z|^2
       \end{eqnarray*}
       in view of the Schwarz inequality.
        \end{proof}

        \begin{proof}[Proof of Proposition \ref{th:infinite}]
        Set
       $
       \psi_{k}=\varphi_{x_k, r_k}|_M.
       $
       By Lemma \ref{lm:ddbar} and Lemma \ref{lm:cutoff} we have
       \begin{equation}\label{eq:ddbarLower}
       i\partial\bar{\partial} \psi_{k} \ge -2\omega/ r_k^2\ \ \ \text{on\ \  } M(x_k, r_k),
              \end{equation}
    and $ i\partial\bar{\partial} \psi_{k} =0$ on $M\backslash \overline{M(x_k,r_k)}$.
 We introduce the following singular Hermitian metric on $L$:
  $$
  \tilde{h}:=h e^{-n\sum_k \psi_{k}}.
  $$
  It follows from $(\ref{eq:ddbarLower})$   that
  $$
  \Theta_{\tilde{h}} \ge \left(\alpha-2n\right)  \omega/r_k^2
  $$
  on $M(x, r_k)$ for every $k$ and $\Theta_{\tilde{h}}\ge 0$ on $M$.

  Now fix $k$ for a moment. Let us choose a coordinate neighborhood $(U,\zeta)$ around $x_k$  and
   a local frame $\xi$ of $L$ over $U$. We may assume that $U\subset M(x_k,r_k)$. Take a smooth function $\chi_k$ such that $\mathrm{supp}\,\chi_k\subset{U}$ and $\chi_k=1$ in a neighborhood of $x_k$. Again we have a solution $u:=u_k$ to the equation
   $$
   \bar{\partial} u= \bar{\partial}\chi_k \wedge d\zeta_1\wedge\cdots\wedge d\zeta_n\otimes \xi
      $$
      which satisfies
      $$
      \int_M |u_k|^2 _{(dV)^{-1}\otimes {h}}\, e^{-n\sum_j \psi_{j}} dV<\infty.
      $$
      It follows that $s_k:=\chi_k d\zeta_1\wedge\cdots\wedge d\zeta_n\otimes \xi-u_k$ is a global holomorphic section
   of $K_M\otimes L$ over $M$ such that $s_k(x_k)\neq 0$, $s_k(x_j) = 0$ for all $j\neq k$, and
   $
   \int_M |s_k|^2_{(dV)^{-1}\otimes {h}} dV<\infty.
      $
      Thus we have
\[{\rm dim\,} H^0_{L^2}(M,K_M\otimes L)=\infty.\qedhere\]
\end{proof}

               Below we give a two-dimensional example satisfying the condition of Proposition \ref{th:infinite}.

                   \begin{example}
         {\rm  Consider $M=\Gamma(f)\hookrightarrow \mathbb C^3$ where $f(z_1,z_2)=e^{iz_1^2}+e^{iz_2^2}-2i z_1z_2$. Then we have}
        $$
g  =  (1+|f_{z_1}|^2)dz_1d\bar{z}_1+(1+|f_{z_2}|^2)dz_2d\bar{z}_2\\
 +f_{z_1}\overline{f_{z_2}}dz_1d\bar{z}_2+f_{z_2}\overline{f_{z_1}}dz_2d\bar{z}_1.
$$
{\rm Set $\varphi=\log (1+|f_{z_1}|^2+|f_{z_2}|^2)$. Then the Ricci tensor is given by $-i\partial\bar{\partial}\varphi$. It is easy to verify that}
$$
i\partial\bar{\partial}\varphi\ge \frac{i\partial f_{z_1}\wedge \overline{\partial f_{z_1}}+ i\partial f_{z_2}\wedge \overline{\partial f_{z_2}}}{(1+|f_{z_1}|^2+|f_{z_2}|^2)^2},
$$
$$
f_{z_1}=2i z_1 e^{iz_1^2}-2i z_2,\ \ f_{z_2}=2i z_2 e^{iz_2^2}-2i z_1,
$$
$$
f_{z_1z_2}=f_{z_2z_1}=-2i,\ \ f_{z_jz_j}=(-4z_j^2+2i)e^{iz_j^2},
$$
{\rm and}
$$
e^{iz_j^2}=e^{-2x_j y_j}\left(\cos(x^2_j-y^2_j)+i\sin(x^2_j-y^2_j)\right),\ \ \ z_j=x_j+iy_j,
$$
{\rm for $j=1,2$. Set $r_k=1/k$ and  }
$$
p_k=\left(\sqrt{2k\pi},\sqrt{2k\pi},f (\sqrt{2k\pi},\sqrt{2k\pi})\right).
$$
{\rm  For all $k\gg 1$ the balls
 $
 M(p_k,r_k)
 $
 are mutually disjoint and satisfy}
 $$
e^{-2x_j y_j}\sim 1+O(k^{-1/2}),\ \ \cos(x^2_j-y^2_j)\sim 1+O(k^{-1/2})
$$
$$
 \sin(x^2_j-y^2_j)= O(k^{-1/2})
$$
{\rm on each $
 M(p_k,r_k)
 $, so that }
 $$
|f_{z_j}|\lesssim 1,\ \ |f_{z_jz_j}|\asymp k
$$
 {\rm Since
$
g \lesssim dz_1d\bar{z}_1+dz_2d\bar{z}_2
$
on $ M(p_k,r_k)$,
 it follows that}
$$
{\rm Ric}_g|_{ M(p_k,r_k)} \lesssim -k^2= - 1/r_k^2.
$$
{\rm By  Proposition \ref{th:infinite} we see that  $P_{m,L^2}(M)=\infty$ for $m\gg 1$.}
           \end{example}

\begin{proof}[Proof of Corollary \ref{cor:affine}]
Suppose on the contrary that
$
\limsup_{\rho\rightarrow \infty} \rho^2\cdot {\rm Ric}_g < 0.
$
Then there exists $\alpha>0$ such that
$
{\rm Ric}_g \le_{\rm as} -\alpha/\rho^2.
$
We choose $m\gg 1$ so that $\alpha>8n/(m-1)$. It follows from Corollary \ref{cor:infinite} that $P_{m,L^2}(M)=\infty$, which contradicts with Theorem \ref{th:affine}.
\end{proof}
         \section{Appendix: A monotonic property}
         The following result is motivated by Gromov's work \cite{Gromov}.

 \begin{proposition}\label{prop:increasing}
  Let $(L,h)$ be a Hermitian line bundle over $M$. Suppose  there exists $\alpha>0$ such that
 $
  {\rm Tr}_g(\Theta_h) \le  \alpha.
 $
Then for \/ $0<p_1<p_2\le \infty $ we have
$$
H^0_{L^{p_1}} (M,L)\subset H^0_{L^{p_2}} (M,L).
$$
 \end{proposition}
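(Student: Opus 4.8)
The plan is to reduce the inclusion to a single uniform pointwise estimate, namely that every $s\in H^0_{L^{p_1}}(M,L)$ automatically satisfies $\sup_M|s|_h<\infty$. Granting this, both conclusions are immediate: for finite $p_2$ one writes
\[
\int_M|s|^{p_2}_h\,dV_g\le\big(\sup_M|s|_h\big)^{p_2-p_1}\int_M|s|^{p_1}_h\,dV_g<\infty ,
\]
while for $p_2=\infty$ the boundedness is exactly the assertion. So from now on fix $s$ with $\int_M|s|^{p_1}_h\,dV_g<\infty$.

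To produce the bound I would study $\psi:=|s|^{p_1/2}_h=\exp\!\big(\tfrac{p_1}{4}\log|s|^2_h\big)$, a nonnegative continuous function with $\int_M\psi^2\,dV_g=\int_M|s|^{p_1}_h\,dV_g<\infty$. Away from $\{s=0\}$ the identity $\Box\log|s|^2_h=-{\rm Tr}_g(\Theta_h)$ together with the hypothesis ${\rm Tr}_g(\Theta_h)\le\alpha$ and $\Delta=4\Box$ gives $\Delta\log|s|^2_h\ge-4\alpha$, whence
\[
\Delta\psi=\psi\Big(\tfrac{p_1}{4}\Delta\log|s|^2_h+\big|\nabla\big(\tfrac{p_1}{4}\log|s|^2_h\big)\big|^2\Big)\ge-p_1\alpha\,\psi .
\]
Across the zero set $\psi$ is continuous and $|s^\ast|^{p_1/2}=\exp\!\big(\tfrac{p_1}{2}\log|s^\ast|\big)$ is plurisubharmonic, so the zero divisor contributes only nonnegative mass; consequently the differential inequality $\Delta\psi\ge-p_1\alpha\,\psi$ persists in the weak sense on all of $M$. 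This can be made rigorous by the same regularization $\psi_\delta=\delta\sigma(\psi/\delta)$, $\delta\to0$, used in the proof of Theorem~\ref{th:finite_0}, which moreover yields $\nabla\psi\in L^2_{\rm loc}$.

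Thus $\psi$ is a nonnegative weak subsolution of $\Delta+c$ with $c:=p_1\alpha$, and the crux is a local mean value estimate of Moser type,
\[
\sup_{M(x,1/2)}\psi^2\le C\int_{M(x,1)}\psi^2\,dV_g,\qquad x\in M,
\]
with a constant $C=C(n,\alpha,p_1)$ \emph{independent of the center $x$}. Here the extrinsic geometry enters decisively: since $M$ is a minimal submanifold of $\C^N=\R^{2N}$, the Michael--Simon Sobolev inequality holds on $M$ with a constant depending only on the real dimension $2n$. For $n\ge2$ this yields the Dirichlet Sobolev inequality
\[
\Big(\int_M|f|^{2\chi}\,dV_g\Big)^{1/\chi}\le C_S\int_M|\nabla f|^2\,dV_g,\qquad \chi=\tfrac{n}{n-1},
\]
for all $f\in C^1_0(M)$, while for $n=1$ it yields the Nash--Gagliardo--Nirenberg inequality $\lVert f\rVert_{L^4}^2\le C_S\lVert f\rVert_{L^2}\lVert\nabla f\rVert_{L^2}$. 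Feeding either into the standard De Giorgi--Nash--Moser iteration for the subsolution $\psi$, with cutoffs $\eta=\chi(|z-x|)$ whose gradients obey $|\nabla\eta|\le\lVert\chi'\rVert_\infty$ uniformly in $x$ (because $|\nabla\rho|\le1$), produces the displayed estimate with a universal constant. Applying it at an arbitrary $x$ gives $\psi(x)^2\le C\int_M\psi^2\,dV_g=C\int_M|s|^{p_1}_h\,dV_g$, hence $\sup_M|s|_h\le\big(C\int_M|s|^{p_1}_h\,dV_g\big)^{1/p_1}<\infty$, which completes the proof.

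I expect the main obstacle to be precisely the uniformity of the mean value constant in $x$: a merely local estimate would be useless, since the whole point is to convert finite $L^{p_1}$ mass into a global supremum bound, and the tails $\int_{M(x,1)}\psi^2$ are controlled only by the fixed quantity $\int_M|s|^{p_1}_h\,dV_g$. This uniformity rests on the fact that the Sobolev constant of a minimal submanifold of Euclidean space is universal, independent of $M$ and of the point, which is exactly the content of the Michael--Simon inequality; the Hardy inequality of Lemma~\ref{lm:Hardy} alone would not suffice, as it provides no gain of integrability.
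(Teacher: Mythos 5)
Your proposal is correct, and it reaches the same two-step reduction as the paper (uniform sup bound from the $L^{p_1}$ norm, then $\int|s|^{p_2}\le(\sup|s|)^{p_2-p_1}\int|s|^{p_1}$), but the mechanism for the sup bound is genuinely different. The paper does not run Moser iteration at all: it absorbs the curvature term by passing to the product $\widetilde M=M\times\C\subset\C^{N+1}$, which is again a closed complex (hence minimal) submanifold, and observes that $e^{\psi}=|s|^p_h\,e^{p\alpha|t|^2}$ is subharmonic there, since the flat Laplacian of $p\alpha|t|^2$ in the extra variable, namely $4p\alpha$, over-compensates the lower bound $\Delta_g\log|s|^p_h\ge-2p\alpha$ coming from ${\rm Tr}_g(\Theta_h)\le\alpha$; the clean mean-value inequality for nonnegative subharmonic functions on minimal submanifolds (\cite{ColdingMinicozzi}, Corollary 1.17) applied on $\widetilde M((x,0),1)$ then yields $|s|^p_h(x)\le{\rm const}_n\,\frac{e^{p\alpha}}{p\alpha}\int_M|s|^p_h\,dV$ with an explicit constant, and no iteration or regularization across the zero divisor is needed because subharmonicity of $|s|^p_h e^{p\alpha|t|^2}$ holds globally (it is locally a psh function times a smooth positive factor). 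Your route instead keeps the zeroth-order term, treats $\psi=|s|^{p_1/2}_h$ as a weak subsolution of $\Delta+p_1\alpha$, and manufactures the uniform mean-value estimate on extrinsic balls from the Michael--Simon Sobolev inequality plus De Giorgi--Nash--Moser iteration; this is sound, since the Sobolev constant is dimensional for minimal submanifolds and the cutoffs $\chi(|z-x|)$ have gradients bounded by $\|\chi'\|_\infty$ uniformly in $x$, and your regularization step for crossing $\{s=0\}$ mirrors what the paper does in Theorem~\ref{th:finite_0}. What each approach buys: the paper's product trick is shorter, gives an explicit constant, and outsources all analysis to a cited mean-value inequality; yours is more self-contained and more robust (it would tolerate any bounded zeroth-order term without needing a clever auxiliary variable), at the price of invoking the full iteration machinery---indeed the mean-value inequality the paper cites is itself typically proved either by the monotonicity formula or by exactly the Michael--Simon-plus-Moser argument you describe, so the two proofs rest on the same underlying geometric fact. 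You also correctly identify the crux (uniformity of the constant in the center $x$) and correctly note that Lemma~\ref{lm:Hardy} could not substitute for it.
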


 \begin{proof}
  We first show that for $0<p<\infty$,
  $$
  H^0_{L^{p}} (M,L) \subset H^0_{L^{\infty}} (M,L).
  $$
  For every $s\in H^0_{L^{p}} (M,L)$ we have
  $$
 \frac{p}2 \Delta \log |s|^2_h \ge -2p {\rm Tr}_g(\Theta_h)\ge -2p \alpha.
    $$
    Set $\tilde{g}= g+ idtd\bar{t}$,  $t\in {\mathbb C}$. It turns out that
    $\psi:=\log |s|^p_h+ p\alpha |t|^2$ is subharmonic with respect to $\tilde{g}$ on $\widetilde{M}:=M\times {\mathbb C}$, so is $e^{\psi}$. Since $\widetilde{M}$ is a minimal submanifold in ${\mathbb C}^{N+1}={\mathbb R}^{2N+2}$, we infer from the  mean-value inequality (cf. \cite{ColdingMinicozzi}, Corollary 1.17) that for every $x\in M$,
    \begin{eqnarray*}
    |s|^p_h(x) = e^{\psi(x,0)} & \le & \frac{\int_{\widetilde{M}((x,0),1)} e^{\psi(y,t)} dV_y dV_t }{{\rm vol}_{\rm eucl}(B_1\subset {\mathbb R}^{2n+2})}\\
   & \le & {\rm const}_n\cdot {\int_{M(x,1)\times {\mathbb D}} e^{\psi(y,t)} dV_y dV_t }\\
   & \le & {\rm const}_n\cdot \int_{M(x,1)} |s|^p_h dV \cdot \int_{\mathbb D} e^{p\alpha |t|^2} dV_t\\
  &  \le & {\rm const}_{n} \cdot \frac{e^{p\alpha}}{p\alpha} \cdot \int_{M} |s|^p_h dV,
          \end{eqnarray*}
          i.e., $\|s\|_{L^\infty}\le {\rm const}_{n,p,\alpha} \cdot \|s\|_{L^p}$.

         Now suppose $s\in H^0_{L^{p_1}} (M,L)$. We have
         $$
         \int_{M} |s|^{p_2}_h dV \le \int_{M} |s|^{p_1}_h dV \cdot \|s\|_{L^\infty}^{p_2-p_1} <\infty,
                         $$
                         i.e., $s\in H^0_{L^{p_2}} (M,L)$.
 \end{proof}

  \begin{corollary}\label{cor:increasing}
 Suppose  there exists $\alpha>0$ such that
 $$
 {\rm Scal}_g \ge  -\alpha.
 $$
Then $P_{m,L^p}(M)$ is non-decreasing in $p$.
 \end{corollary}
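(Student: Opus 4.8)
The plan is to recognize $P_{m,L^p}(M)$ as the dimension of an $L^p$ Bergman space for a suitable Hermitian line bundle and then invoke Proposition \ref{prop:increasing} verbatim. By definition $P_{m,L^p}(M)=\dim_{\mathbb C}H^0_{L^p}(M,K_M^{\otimes m})$, where the pluricanonical bundle $K_M^{\otimes m}$ is equipped with the natural metric $h=(dV_g)^{-m}$. So the entire statement will follow once I check that this particular $(L,h)=(K_M^{\otimes m},(dV_g)^{-m})$ satisfies the trace hypothesis ${\rm Tr}_g(\Theta_h)\le\alpha'$ of Proposition \ref{prop:increasing} for some constant $\alpha'$.

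The only input I need to supply is the curvature of this bundle, which is essentially the computation already performed in the proof of Theorem \ref{th:B-Metric}. There it was recorded that the metric $(dV_g)^{-1}$ on $K_M$ has curvature $-{\rm Ric}_g$; consequently $h=(dV_g)^{-m}$ has curvature $\Theta_h=-m\,{\rm Ric}_g$, and taking the trace with respect to $g$ gives ${\rm Tr}_g(\Theta_h)=-m\,{\rm Scal}_g$. The hypothesis ${\rm Scal}_g\ge-\alpha$ is exactly the statement $-{\rm Scal}_g\le\alpha$, so it yields the uniform bound ${\rm Tr}_g(\Theta_h)=-m\,{\rm Scal}_g\le m\alpha$ on all of $M$.

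With this global bound in hand, Proposition \ref{prop:increasing}, applied to $L=K_M^{\otimes m}$ with the constant $m\alpha$ in place of $\alpha$, furnishes the nested inclusions $H^0_{L^{p_1}}(M,K_M^{\otimes m})\subset H^0_{L^{p_2}}(M,K_M^{\otimes m})$ for all $0<p_1<p_2\le\infty$. Passing to complex dimensions turns this chain of inclusions into $P_{m,L^{p_1}}(M)\le P_{m,L^{p_2}}(M)$, which is precisely the asserted monotonicity of $P_{m,L^p}(M)$ in $p$.

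Since every step is a direct substitution into a result already proved, I do not expect a genuine obstacle here; the corollary is a clean specialization of Proposition \ref{prop:increasing}. The single point demanding care is the sign and normalization underlying the identity ${\rm Tr}_g(\Theta_{(dV_g)^{-1}})=-{\rm Scal}_g$, but this is exactly the curvature identity established in the course of proving Theorem \ref{th:B-Metric}, so I may simply cite it.
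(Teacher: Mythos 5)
Your proposal is correct and is exactly the argument the paper intends: the corollary is stated as an immediate consequence of Proposition \ref{prop:increasing}, obtained by taking $L=K_M^{\otimes m}$ with $h=(dV_g)^{-m}$, noting $\Theta_h=-m\,{\rm Ric}_g$ so that ${\rm Tr}_g(\Theta_h)=-m\,{\rm Scal}_g\le m\alpha$, and passing from the inclusions $H^0_{L^{p_1}}(M,K_M^{\otimes m})\subset H^0_{L^{p_2}}(M,K_M^{\otimes m})$ to the inequality of dimensions. No gaps; the curvature identity you cite from the proof of Theorem \ref{th:B-Metric} is the correct justification.
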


 \begin{remark}
 {\rm Fix $p<2$ and $k\in \mathbb Z^+$ with $kp\ge 2$. Suppose we have
 $$
 s_j\in H^0_{L^2}(M,K_M^{\otimes m_j})\subset H^0_{L^{kp}}(M, K_M^{\otimes m_j}),\ \ \  j=1,\cdots,k.
 $$
  Then}
 $$
 s:=s_1\otimes \cdots \otimes s_k\in H^0(M,K_M^{\otimes m}), \ \ \ m=m_1+\cdots+m_k.
  $$
 {\rm H\"older's inequality gives}
 $$
 \int_M |s|_{h^{\otimes m}}^{p} dV\le \prod_{j=1}^k \|s_j\|_{L^{kp}}^{p}<\infty,
 $$
 {\rm i.e., $s\in H^0_{L^{p}}(M,K_M^{\otimes m})$. Combined with Proposition \ref{prop:increasing} we may produce many $L^p$ pluri-canonical sections from $L^2$ pluri-canonical sections.
 }
 \end{remark}

  \subsection*{Acknowledgments} The first author would like to thank Prof. Takeo Ohsawa for bringing the reference \cite{Fujimoto} to his attention. The authors are also indebted to the referee for valuable comments.

\end{document}